\subjclass[2010]{Primary: 55J15, 57M15, 57Q10 Secondary: 05C05, 05C21, 05E45, 82C31}
\newtheorem{thm}{Theorem}[section]
\newtheorem*{un-no-thm}{Theorem}
\newtheorem{cor}[thm]{Corollary}     
\newtheorem{lem}[thm]{Lemma}         
\newtheorem{prop}[thm]{Proposition}
\newtheorem{hypo_no}[thm]{Hypothesis}
\newtheorem{bigthm}{Theorem}
\newtheorem{bigcor}[bigthm]{Corollary}
\newtheorem{bigadd}[bigthm]{Addendum}
\theoremstyle{definition}
\newtheorem{defn}[thm]{Definition}   
\theoremstyle{definition}
\theoremstyle{definition}
\theoremstyle{remark}
\newtheorem{rem}[thm]{Remark}
\newtheorem*{acks}{Acknowledgements}
\newtheorem*{out}{Outline}
\newtheorem{ex}[thm]{Example}
\newtheorem*{convent}{Conventions}
\DeclareMathOperator{\End}{end}
\DeclareMathOperator{\im}{im}
\DeclareMathOperator{\tr}{tr}
\begin{document}
\title[Kirchhoff's theorems in higher dimensions]{Kirchhoff's theorems in higher dimensions
and Reidemeister torsion}

\date{\today}

\author[M.J.~Catanzaro]{Michael J.\ Catanzaro}
\address{Department of Mathematics, Wayne State University, Detroit, MI 48202}
\email{mike@math.wayne.edu}
\author[V.Y.~Chernyak] {Vladimir Y.\ Chernyak}
\address{Department of Chemistry, Wayne State University, Detroit, MI 48202}
\email{chernyak@chem.wayne.edu}
\author[J.R.~Klein]{John R.\ Klein}
\address{Department of Mathematics, Wayne State University, Detroit, MI 48202}
\email{klein@math.wayne.edu}

\begin{abstract} Using ideas from algebraic topology and
statistical mechanics, we generalize Kirchhoff's network and matrix-tree theorems to
finite CW complexes of arbitrary dimension. As an application, we give a formula expressing Reidemeister
torsion as an enumeration of higher dimensional spanning trees.
\end{abstract}

\maketitle
\setlength{\parindent}{15pt}
\setlength{\parskip}{1pt plus 0pt minus 1pt}
\def\bdot{\bold .}
\def\Sp{\bold S\bold p}

\def\vo{\varOmega}
\def\smsh{\wedge}
\def\^{\wedge}
\def\flush{\flushpar}
\def\id{\text{\rm id}}
\def\dbslash{/\!\! /}
\def\codim{\text{\rm codim\,}}
\def\:{\colon}
\def\holim{\text{holim\,}}
\def\hocolim{\text{hocolim\,}}
\def\cal{\mathcal}
\def\Bbb{\mathbb}
\def\bold{\mathbf}
\def\simtwohead{\,\, \hbox{\raise1pt\hbox{$^\sim$} \kern-13pt $\twoheadrightarrow \, $}}
\def\codim{\text{\rm codim\,}}
\def\stableto{\mapstochar \!\!\to}
\let\Sec=\S
\def\Z{\mathbb Z}
\def\Q{\mathbb Q}
\def\R{\mathbb R}
\newcommand{\lra}{\longrightarrow}

\setcounter{tocdepth}{1}
\tableofcontents

\section{Introduction \label{sec:intro}}

Gustav Kirchhoff's results on electrical networks,
which predate Maxwell's theory of electromagnetism,
are a product of the mid-19$^{\text{th}}$ century \cite{Kirchhoff1}, \cite{Kirchhoff2}.
{\it Kirchhoff's network theorem} states that in any resistive network
 there is a unique current satisfying Ohm's law and
Kirchhoff's current and voltage laws, and furthermore this current can be
explicitly computed.  The first complete treatment of the network theorem
is attributed to Hermann Weyl \cite{Weyl} in 1923. By the mid-20$^\text{th}$ century,
algebraic topology provided key ideas leading
to a simple and elegant proof \cite{Eckmann},\cite{Roth},\cite{NS}.
A companion result is {\it Kirchhoff's matrix-tree theorem} which gives a formula
for the number of spanning trees in a finite connected graph (see 
\cite{Moon} for a history of this result).
This paper is an outgrowth of our investigations on the interplay between
algebraic topology and statistical mechanics \cite{CKS1}, \cite{CKS2}, \cite{CKS3}.
Our aim is to generalize Kirchhoff's results to higher dimensions, as
well as to connect these results to the theory of Reidemeister torsion.

\subsection*{A high dimensional network theorem}
Suppose $X$ is a finite connected CW complex of dimension $d$.
Let $C_j(X;\Bbb R)$ denote the cellular chain complex of $X$ with real coefficients
and the standard inner product $\langle \phantom{a},\phantom{a}\rangle$ for which 
the set of $j$-cells, denoted $X_j$, is an orthonormal basis.
In what follows we fix a function $r\: X_d \to \Bbb R_+$; the value of
$r$ at a $d$-cell $b$ is considered to be the resistance of $b$.
Define a linear transformation $R\: C_{d}(X;\R) \to C_d(X;\R)$  by mapping a $d$-cell $b$ to $r_b b$ and extending linearly.
Let $B_{d-1}(X;\R) \subset C_{d-1}(X;\R)$ be the vector subspace of $(d-1)$-boundaries
and let $Z_d(X;\R)\subset C_d(X;\R)$ be the vector subspace of $d$-cycles.

\begin{defn}
A {\it network problem} for $X$ consists of a choice of $p \in B_{d-1}(X;\R)$ and
$q \in Z_d(X;\R)$, respectively called {\it  $(d{-}1)$-boundary current} and
{\it $d$-cycle voltage}.\footnote{When $d=1$, in the terminology of Roth \cite{Roth},
$p$ is a {\it node current} and $q$ is a {\it mesh voltage}, each arising from an external source. Bollob\'as \cite[p.~41]{Bollobas} only considers the case when $q =0$ and $p$ is of the form $p_ii + p_jj$ for a pair of
distinct vertices $i$ and $j$.}
A {\it solution}
consists of  $\mathbf V,\mathbf J \in C_d(X;\R)$ such that

\begin{align}
&\mathbf V = R\mathbf J\, , & \text{(Ohm's law)} \\
&  \partial \mathbf J = p \, , &\text{(current  law)}  \\
&  \langle \mathbf V ,z \rangle = \langle q ,z \rangle \, , \quad \text{ for all }  z \in Z_d(X)\, .  &\text{(voltage  law)}
\end{align}

\end{defn}


To see why
a solution exists,
define a modified inner product $\langle\phantom{-},\phantom{-}\rangle_R$ on $C_d(X;\Bbb R)$
by
\[
\langle b,b'\rangle_R = \langle Rb,b'\rangle
\]
for $b,b' \in X_d$.
Let
\[\partial^\ast_R\: C_{d-1}(X;\Bbb R) \to C_d(X;\Bbb R)
\] denote the formal
adjoint to $\partial$ using the standard inner product on $C_{d-1}(X;\Bbb R)$
and the modified inner product on $C_d(X;\Bbb R)$.
Let $B^d_R(X;\Bbb R)$
be the image of $\partial_R^\ast$ and note
that $B^d_R(X;\Bbb R)$ is the orthogonal complement to $Z_d(X;\Bbb R)$ in $C_d(X;\R)$
with respect to the modified
inner product. Elementary linear algebra implies
$\partial\: B^d_R(X;\Bbb R) \to B_{d-1}(X;\R)$ is an isomorphism.
Consequently, there is a unique $\mathbf  J_0 \in B^1_R(X;\R)$ such that
$\partial \mathbf J_0 = p$. Set $\mathbf V_0 = R\mathbf J_0$. Then
$\langle \mathbf V_0 ,z \rangle = \langle \mathbf J_0,z \rangle_R =  0$ for all $z \in Z_d(X;\R)$.
Let $\mathbf J_1$ be the orthogonal projection of $R^{-1}q$ onto $Z_d(X;\R)$ in
the modified inner product, and set $x = \mathbf J_1- R^{-1}q $.
Then $\langle R\mathbf J_1 - q,z \rangle =
\langle x,z \rangle_R = 0$ for all $z \in Z_d(X;\R)$. Set $\mathbf V_1 = R\mathbf J_1$.  Then $\mathbf J := \mathbf J_0 +
\mathbf J_1$ and $\mathbf V := \mathbf V_0 + \mathbf V_1$ solve the network problem.
It is straightforward to show that this solution is unique.
\medskip

The above solution to the network problem uses
the orthogonal projection of $C_d(X;\Bbb R)$ onto
$Z_d(X;\Bbb R)$
in the modified inner product. In the classical case $d=1$, Kirchhoff gave a formula
expressing the orthogonal projection
as a weighted sum indexed over  the set of spanning trees
of $X$.  To get an explicit formula
in higher dimensions we will need a notion of spanning tree.

\begin{defn}
\label{def:spa-t} Assume as above that $X$ is a connected finite CW complex of dimension $d$.
A {\it spanning tree} for $X$ is a subcomplex $T$ such that
\begin{itemize}
\item $H_d(T;\Bbb Z) = 0$,
\item $\beta_{d-1}(T) = \beta_{d-1}(X)$, where $\beta_k(X)$ denotes the $k$-th Betti number,
\item $X^{(d-1)} \subset T$, where $X^{(k)}$ is the $k$-skeletion of $X$.
\end{itemize}
\end{defn}

\begin{rem} We will show in the next section such spanning trees exist.
The reader should be aware that the literature contains sundry notions of ``high dimensional spanning tree.''\footnote{For example, \cite{DKM}
replaces condition (2) with  the requirement that the reduced Betti number $\tilde \beta_{d-1}(T)$ is trivial. This implies
$\tilde\beta_{d-1}(X)$ is trivial as well, so the notion of spanning tree
in \cite{DKM} does not apply
to a general finite complex $X$. See \cite{Petersson} for a detailed discussion of the various notions.}
Note that when $d=1$, our definition reduces to
the classical notion of spanning tree.
\end{rem}

\begin{defn}
For a spanning tree $T$ of $X$, define a linear transformation
\[
\bar T\:
C_{d}(X;\Bbb R) \to Z_d(X;\Bbb R)
\]
 as follows: Let $b$  be a $d$-cell. If $b$ is contained  in $T$ then
we set $\bar T(b) = 0$. Otherwise, note that $H_d(T\cup b;\Z) = Z_d(T\cup b;\Z)$ is infinite cyclic. Let $c$ be a generator. Set $t_b = \langle c, b\rangle$ (this is always
non-zero). Then $\bar T(b) := c/t_b$, is a real $d$-cycle of $X$.
It is easy to see that $\bar T(b)$ is independent of the choice of $c$.
\end{defn}

Let $\theta_T$ denote the order of the torsion subgroup of
$H_{d-1}(T;\Bbb Z)$ and define the {\it weight of $T$} to be the positive real number
\[
w_T := \theta_T^2\prod_{b\in T_{d}} r_b^{-1}\, .
\]

\begin{bigthm}[Higher Projection Formula] \label{bigthm:network} With respect to the modified inner product
$\langle \phantom{a} ,\phantom{a} \rangle_R$,
the orthogonal projection $C_{d}(X;\Bbb R) \to Z_d(X;\Bbb R)$ is given by
$$
\tfrac{1}{\Delta}\sum_{T} w_T\bar T\, ,
$$
where the sum is over all spanning trees, and  $\Delta = \sum_T w_T$.
\end{bigthm}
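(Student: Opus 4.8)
The plan is to verify the two properties characterizing the $R$-orthogonal projection. Write $Q:=\tfrac1\Delta\sum_T w_T\bar T$. Each $\bar T$ lands in $Z_d(X;\R)$, so $\im Q\subseteq Z_d(X;\R)$ is automatic, and it remains to show that $x-Qx$ is $R$-orthogonal to $Z_d(X;\R)$ for every $x$; since the $d$-cells form a basis and $\langle b,z\rangle_R=r_b\langle b,z\rangle$, this is the identity
\[
\sum_T w_T\,\langle \bar T(b),z\rangle_R=\Delta\,\langle b,z\rangle_R\qquad(b\in X_d,\ z\in Z_d(X;\R)).
\]
Once this holds, $Q$ fixes $Z_d(X;\R)$ and annihilates its $R$-orthogonal complement $B^d_R(X;\R)$, so $Q$ is the projection. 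A convenient model for the target is the closed form $P=\id-\partial^\ast_R L^{+}\partial$, where $L:=\partial\,\partial^\ast_R$ is the conductance-weighted Laplacian on $C_{d-1}(X;\R)$ and $L^{+}$ inverts it on $\im L=B_{d-1}(X;\R)$; indeed $\partial Px=0$ and $x-Px\in\im\partial^\ast_R=B^d_R(X;\R)$, so matching $Q$ with $P$ reduces to a matrix-tree computation of $L^{+}$.

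Second, I would convert the weights into exterior algebra. A subcomplex $T\supseteq X^{(d-1)}$ is a spanning tree precisely when its set $S\subseteq X_d$ of $d$-cells has size $\rho:=\operatorname{rank}\partial_d$ and $\{\partial b\}_{b\in S}$ is a basis of $B_{d-1}(X;\R)$; thus spanning trees are the bases of the matroid on $X_d$ given by the columns of $\partial_d$. The crucial point is that the torsion $\theta_T$ is a Pl\"ucker coordinate. Let $\overline B\subseteq C_{d-1}(X;\Z)$ denote the saturation of $\im\partial_d$ and fix a generator $\omega$ of the rank-one lattice $\bigwedge^{\rho}\overline B$; then for $S=\{b_1,\dots,b_\rho\}$ one has $\partial b_1\wedge\cdots\wedge\partial b_\rho=\pm\,\theta_T\,\omega$. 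This follows from Smith normal form: $\{\partial b_i\}$ is a $\Z$-basis of $\Lambda_S:=\partial(\Z S)$, and $[\overline B:\Lambda_S]$ equals the order of the torsion of $Z_{d-1}(X;\Z)/\Lambda_S=H_{d-1}(T;\Z)$, namely $\theta_T$, because $\Lambda_S$ and $\im\partial_d$ have the same rational span and hence the same saturation. Therefore $w_T=\theta_T^2\prod_{b\in S}r_b^{-1}$ is exactly a squared Pl\"ucker coordinate weighted by conductances.

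Third comes the Cauchy--Binet bookkeeping. The $R$-inner product makes the decomposable vectors $e_S:=b_{1}\wedge\cdots\wedge b_{\rho}$ orthogonal with $\|e_S\|_R^2=\prod_{b\in S}r_b$, and the functional on $\bigwedge^{\rho}C_d(X;\R)$ extracting the $\omega$-component of $\bigwedge^{\rho}\partial$ sends $e_S\mapsto\pm\theta_T$ on trees and to $0$ otherwise. Cauchy--Binet then yields $\Delta=\sum_T\theta_T^2\prod_{b\in S}r_b^{-1}=\det\!\big(L|_{\overline B}\big)$, the weighted higher matrix-tree theorem, and simultaneously expresses the cofactors of $L|_{\overline B}$ --- equivalently the entries of $\Delta\,L^{+}$ in the saturated basis --- as the analogous pairwise tree sums. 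For the numerator I would expand the fundamental cycle as $\bar T(b)=b-\sum_{b'\in S}\mu_{b'}b'$ with $\partial b=\sum_{b'\in S}\mu_{b'}\partial b'$; Cramer's rule identifies $\mu_{b'}=\pm\theta_{T'}/\theta_T$ for $T'=T-b'+b$, so that $\sum_T w_T\langle\bar T(b),z\rangle_R$ becomes an adjugate expansion of $L$ paired against $\partial b$ and $z$. The condition $\partial z=0$ is used exactly here, to collapse the off-diagonal terms, and what survives is $\Delta\langle b,z\rangle_R$, completing the identity and hence $Q=P$.

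The main obstacle is the correct production of the factor $\theta_T^2$ rather than a sum of squared minors: a naive Cauchy--Binet on $\partial\partial^\ast_R$ would give $\sum_T\big(\sum_I(\det\partial_{I,S})^2\big)$, which is not the tree weight. The fix is precisely the saturated-lattice normalization of $\omega$, which makes each spanning tree contribute a single Pl\"ucker coordinate equal to $\pm\theta_T$; verifying this normalization (the Smith-normal-form computation identifying $[\overline B:\Lambda_S]$ with $\theta_T$) is the technical heart. The remaining difficulties are genuinely higher-dimensional: one must track the possibly nontrivial Betti number $\beta_{d-1}(X)$ and torsion in $H_{d-2}(X;\Z)$ through the saturation $\overline B$, and assemble the all-minors/adjugate expansion so that $\id-\partial^\ast_R L^{+}\partial$ reproduces the fundamental-cycle sum $\tfrac1\Delta\sum_T w_T\bar T$ term by term. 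When $d=1$ all torsion is trivial ($\theta_T=1$) and $\overline B=\im\partial_1$, and the argument specializes to Kirchhoff's classical derivation.
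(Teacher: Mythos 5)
Your strategy is sound and the individual claims in it check out, but it takes a genuinely different route from the paper. The paper's proof (following Nerode--Shank) never touches Cauchy--Binet: it sets $F=\sum_T w_T\bar T$, observes that $F$ restricts to $\Delta\cdot\mathrm{id}$ on $Z_d(X;\R)$ because $\bar T(z)=z$ for every cycle, and then proves that $F$ is \emph{self-adjoint} in the modified inner product; self-adjointness plus image in $Z_d$ plus fixing $Z_d$ forces $\tfrac1\Delta F$ to be the orthogonal projection. The self-adjointness rests on a purely local tree-exchange identity, $\theta_T^2\langle\bar T(b_i),b_j\rangle=\theta_U^2\langle b_i,\bar U(b_j)\rangle$ for $U=(T\cup b_i)\setminus b_j$ (Corollary \ref{cor:key}), which is exactly your Cramer's-rule observation $\mu_{b'}=\pm\theta_{T'}/\theta_T$ repackaged via the short exact sequence $0\to\Z/t_b\Z\to H_{d-1}(T;\Z)\to H_{d-1}(T\cup b;\Z)\to 0$; summing the exchange identity over trees (Lemma \ref{lem:TU}) is what replaces your adjugate expansion. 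Your route reverses the paper's logical order: you prove the weighted matrix-tree theorem first (via the saturated-lattice/Pl\"ucker normalization $\partial b_1\wedge\cdots\wedge\partial b_\rho=\pm\theta_T\,\omega$, which is indeed the correct way to make each tree contribute a single $\theta_T^2$) and then extract the projection from the cofactors of $\cal L$, whereas the paper proves the projection formula first and \emph{derives} Theorem \ref{thm:higher-matrix-tree} from it by the logarithmic-derivative argument of \S\ref{sec:weak-matrix-tree} (which also sidesteps the normalization constant until \S\ref{app:extras}). What your approach buys is a self-contained, determinantal proof that yields the all-minors information (the full matrix of $\cal L^{+}$) in one stroke; what the paper's approach buys is brevity and the avoidance of any exterior-algebra bookkeeping. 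The one place where your write-up is only a sketch is the final ``collapse'': showing that $\sum_T w_T\langle\bar T(b),z\rangle_R-\Delta\langle b,z\rangle_R$ vanishes for $\partial z=0$ requires pairing each tree $T\not\ni b$ and each $b'\in T$ with the exchanged tree $T-b'+b\ni b$ and matching weights via $w_T\,\theta_{T'}/\theta_T\cdot r_{b'}=w_{T'}\,\theta_T/\theta_{T'}\cdot r_b$ --- that is, you will end up reproving the paper's exchange lemma; once that identity is written down explicitly your argument closes.
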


Let $\partial^*\: C_{d-1}(X;\R) \to C_d(X;\R)$ be the formal adjoint to 
the boundary operator with respect to the standard inner product. Define
$B^{d}(X,\R)$ to be the image of $\partial^*$. Then we have

\begin{bigadd}[Higher Network Theorem] \label{add:Kirchhoff_formula}
Given a vector $\mathbf V \in C_d(X;\R)$, there is only one vector
$z\in Z_d(X;\R)$ such that $\mathbf V - Rz \in B^{d}(X,\R)$. Furthermore, for each $d$-cell
$b$, we have
\[
\langle z,b\rangle = \tfrac{1}{\Delta} \sum_T \tfrac{w_T}{r_b} \langle \mathbf V,\bar T(b)\rangle \, .
\]
\end{bigadd}

\begin{rem} In classical network terminology ($d=1)$, $\langle \mathbf V,b \rangle$
is the {\it voltage source} on branch $b$ and
 $\langle z,b\rangle$ is the {\it current} resulting in branch $b$ (see \cite{Roth},\cite{NS}).
\end{rem}

\subsection*{A high dimensional matrix-tree theorem}
The classical matrix-tree theorem enumerates
the number of spanning trees of a graph. In higher dimensions, the best we can achieve is
an expression for $\sum_{T} \theta^2_T$.

Observe that $B_{d-1}(X;\Bbb R)$ is an invariant subspace of the operator
\[
\partial \partial_R^*\: C_{d-1}(X;\Bbb R) \to C_{d-1}(X;\Bbb R).
\]
Let
\[
\cal L^R \: B_{d-1}(X;\Bbb R) @> \cong >> B_{d-1}(X;\Bbb R)
\]
denote the associated restriction.

\begin{bigthm}[Higher Weighted Matrix-Tree Theorem] \label{thm:higher-matrix-tree} We have
\[
\det \cal L^R = \gamma_X\sum_T w_T \, ,
\]
where the sum is indexed over all spanning trees of $X$, and the normalizing
factor is given by
\[
\gamma_X = \frac{\mu_X}{\theta_X^2}\, ,
\]
where $\mu_X \in \Bbb N$ is the square of the covolume of the lattice $B_{d-1}(X;\Bbb Z) \subset
B_{d-1}(X;\Bbb R)$ with respect to the restriction of the standard
inner product of $C_{d-1}(X;\Bbb R)$  and $\theta_X$ is the order of the torsion subgroup
of $H_{d-1}(X;\Bbb Z)$.
\end{bigthm}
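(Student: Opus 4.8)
The plan is to compute $\det \cal L^R$ via the Cauchy--Binet formula applied to a suitable matrix factorization of the operator $\partial\partial_R^*$ restricted to $B_{d-1}(X;\R)$, exactly paralleling the classical proof of the matrix-tree theorem for graphs. First I would fix an orthonormal basis for $C_{d-1}(X;\R)$ (the $(d{-}1)$-cells) and for $C_d(X;\R)$ (the $d$-cells), and represent $\partial\colon C_d \to C_{d-1}$ by its matrix $M$. Since $\partial_R^*$ is the adjoint of $\partial$ with respect to the standard inner product on $C_{d-1}$ and the modified inner product $\langle\,,\,\rangle_R$ on $C_d$, its matrix is $R^{-1}M^{\mathsf T}$ (the extra $R^{-1}$ records the resistances). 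Thus the full operator $\partial\partial_R^*$ is represented by $M R^{-1} M^{\mathsf T}$, a ``weighted Laplacian.'' The content of the theorem is the identification of $\det \cal L^R$, the determinant of the \emph{restriction} to the invariant subspace $B_{d-1}(X;\R)$, with a weighted tree sum.

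The key step is to handle the restriction to $B_{d-1}(X;\R)$ correctly, since $\partial\partial_R^*$ is singular on all of $C_{d-1}(X;\R)$ (it annihilates $Z_{d-1}$ and anything outside the boundaries). I would choose an integral basis $e_1,\dots,e_m$ for the lattice $B_{d-1}(X;\Z)$ and express $\partial$ in terms of this basis: writing $\partial = P$ where $P$ is the $m\times N$ integer matrix whose columns are the images $\partial b$ expanded in the chosen basis of $B_{d-1}(X;\Z)$, the restricted operator $\cal L^R$ is represented (up to the Gram matrix of the basis $\{e_i\}$) by $P R^{-1} P^{\mathsf T}$. The determinant of this restriction then carries two correction factors: the Gram determinant of the integral basis, which is precisely $\mu_X$, the squared covolume appearing in $\gamma_X$, and a factor accounting for the discrepancy between the lattice $B_{d-1}(X;\Z)$ and the image of the integral boundary map $\partial\colon C_d(X;\Z)\to C_{d-1}(X;\Z)$, which is where $\theta_X$ enters. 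Tracking these two normalizations carefully is the main bookkeeping burden.

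With the factorization $\cal L^R \sim P R^{-1} P^{\mathsf T}$ in hand, I would apply Cauchy--Binet:
\[
\det(P R^{-1} P^{\mathsf T}) = \sum_{S} \Big(\prod_{b\in S} r_b^{-1}\Big)\,\big(\det P_S\big)^2 \, ,
\]
where $S$ ranges over subsets of $d$-cells of size $m = \dim B_{d-1}(X;\R)$ and $P_S$ is the corresponding maximal square submatrix. The combinatorial heart is then to show that $(\det P_S)^2$ is nonzero precisely when the complement $T = X^{(d-1)} \cup \{\text{$d$-cells not in } S\}$ is a spanning tree in the sense of Definition~\ref{def:spa-t}, and that in that case $(\det P_S)^2 = \theta_T^2$ (relative to the integral basis of $B_{d-1}$). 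This last identification is the crux: a subset $S$ of $d$-cells of the right cardinality yields a nonsingular minor exactly when those cells span $B_{d-1}(X;\R)$, equivalently when removing them leaves a subcomplex with no $d$-cycles and full $(d{-}1)$-Betti number; and the absolute value of the minor computes the index $[\,B_{d-1}(X;\Z) : \partial(\text{the $S$-cells})\,]$, which by the structure of the chain complex equals $\theta_T$. Combining the Cauchy--Binet sum with the normalization $\gamma_X = \mu_X/\theta_X^2$ yields $\det \cal L^R = \gamma_X \sum_T \theta_T^2 \prod_{b\in T}r_b^{-1} = \gamma_X \sum_T w_T$, as claimed.

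I expect the main obstacle to be the precise determinant-to-torsion identification $|\det P_S| = \theta_T$, which requires invoking the Smith normal form of the relevant integral boundary maps and matching the elementary divisors to the torsion of $H_{d-1}(T;\Z)$; in the graph case this is trivial because all minors are $\pm 1$ and $\theta_T = 1$, so there is genuinely new arithmetic in dimensions $d>1$. A secondary subtlety is ensuring the two global normalizing constants $\mu_X$ and $\theta_X^2$ are disentangled cleanly from the local per-tree factors $\theta_T^2$; I would isolate these by comparing the Gram matrix of the chosen integral basis of $B_{d-1}(X;\Z)$ against the standard inner product and by relating the cokernel of $\partial\colon C_d(X;\Z)\to B_{d-1}(X;\Z)$ to $H_{d-1}(X;\Z)_{\mathrm{tors}}$.
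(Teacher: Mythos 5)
Your route is genuinely different from the paper's. The paper never touches Cauchy--Binet: it uses the Higher Projection Formula (Theorem \ref{bigthm:network}) to establish the differential identity $d\ln\det\cal L = d\ln\sum_T w_T$ in the weights $W=\ln r$ (Proposition \ref{prop:derived}), which pins down $\det\cal L^R=\gamma\sum_T w_T$ only up to a multiplicative constant, and then identifies $\gamma$ via the low-temperature limit (Proposition \ref{prop:goodW}), in which both sides concentrate on a single spanning tree of maximal weight. Your Cauchy--Binet expansion is essentially the strategy of the references the paper cites (Kalai, Duval--Klivans--Martin, Petersson, Lyons) and is perfectly viable here; it is more self-contained (it does not need Theorem \ref{bigthm:network} at all), whereas the paper's argument is designed to also yield the more general Theorem \ref{thm:general-higher-matrix-tree} and to reuse the low-temperature machinery later. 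Your reduction $\det\cal L^R=\mu_X\det(PR^{-1}P^{\mathsf T})$ via the Gram matrix of an integral basis of $B_{d-1}(X;\Z)$ is correct.

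That said, two concrete errors sit exactly at what you yourself call the crux. First, the spanning tree associated to a nonsingular maximal minor $P_S$ is $T=X^{(d-1)}\cup S$, \emph{not} the complement: a spanning tree has exactly $m=\dim B_{d-1}(X;\R)$ cells in dimension $d$ (injectivity of $\partial$ on $C_d(T;\R)$ forces $|T_d|\le m$, while $B_{d-1}(T;\R)=B_{d-1}(X;\R)$ forces $|T_d|\ge m$), whereas $|X_d\setminus S|=\beta_d(X)$, so your complement generally has the wrong number of $d$-cells to be a tree; only with $S=T_d$ does the Cauchy--Binet weight $\prod_{b\in S}r_b^{-1}$ match $w_T/\theta_T^2$. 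Second, $|\det P_S|$ is the index $[B_{d-1}(X;\Z):B_{d-1}(T;\Z)]$, which equals $\theta_T/\theta_X$, not $\theta_T$: apply the snake lemma to the two sequences $0\to B_{d-1}\to Z_{d-1}\to H_{d-1}\to 0$ for $T$ and $X$, using $Z_{d-1}(T;\Z)=Z_{d-1}(X;\Z)$; this is the paper's relation \eqref{eq:t-theta-relation}. Correspondingly, your proposed source of the $\theta_X^{-2}$ --- a ``discrepancy between the lattice $B_{d-1}(X;\Z)$ and the image of the integral boundary map'' --- is vacuous, since $B_{d-1}(X;\Z)$ is by definition that image; the $\theta_X$ must come out of the minors themselves. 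With these two corrections the computation closes: $\det\cal L^R=\mu_X\sum_T(\theta_T/\theta_X)^2\prod_{b\in T_d}r_b^{-1}=\gamma_X\sum_T w_T$.
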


The unweighted case when $r\: X_{d} \to \Bbb R_+$ is constant with value 1 is worth singling
out, as it gives rise
to the operator \[
\cal L = \partial\partial^\ast \:B_{d-1}(X;\Bbb R)@> \cong >> B_{d-1}(X;\Bbb R)\, .
\]
In this case $w_T = \theta^2_X$ and Theorem  \ref{thm:higher-matrix-tree} becomes

\begin{bigcor}[Higher Matrix-Tree Theorem] \label{cor:higher-matrix-tree} For $\cal L$ as above, we have
 \[ \det \cal L = \gamma_X\sum_T \theta^2_T\, .\]
\end{bigcor}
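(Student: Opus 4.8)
The plan is to obtain the corollary as the specialization of the Higher Weighted Matrix-Tree Theorem (Theorem~\ref{thm:higher-matrix-tree}) to constant resistance, so that essentially no work beyond bookkeeping is required. First I would take the resistance function $r\: X_d \to \R_+$ to be identically $1$. With this choice the linear transformation $R\: C_d(X;\R) \to C_d(X;\R)$ sending each $d$-cell $b$ to $r_b b$ is the identity, and consequently the modified inner product $\langle b, b'\rangle_R = \langle Rb, b'\rangle$ collapses to the standard inner product. It follows that the formal adjoint $\partial_R^*$ coincides with $\partial^*$, and hence the restricted operator $\mathcal{L}^R = \partial\partial_R^*$ on $B_{d-1}(X;\R)$ specializes to $\mathcal{L} = \partial\partial^*$.

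Next I would evaluate the weights. By definition $w_T = \theta_T^2 \prod_{b \in T_d} r_b^{-1}$, and since $r_b = 1$ for every $d$-cell $b \in T_d$, the product over $T_d$ equals $1$; thus $w_T = \theta_T^2$ for every spanning tree $T$. The normalizing factor $\gamma_X = \mu_X/\theta_X^2$ depends only on the covolume of the lattice $B_{d-1}(X;\Z)$ and on the torsion of $H_{d-1}(X;\Z)$, neither of which involves $r$, so it is unaffected by the specialization. Substituting these two observations into the conclusion $\det \mathcal{L}^R = \gamma_X \sum_T w_T$ of Theorem~\ref{thm:higher-matrix-tree} yields $\det \mathcal{L} = \gamma_X \sum_T \theta_T^2$, which is precisely the asserted formula.

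Since all of the genuine difficulty is absorbed into the weighted theorem being specialized, I do not expect a real obstacle here. The only point meriting an explicit line of verification is the identification $\partial_R^* = \partial^*$, hence $\mathcal{L}^R = \mathcal{L}$, when $r \equiv 1$; this is immediate from the defining relation $\langle b, b'\rangle_R = \langle Rb, b'\rangle$ once one notes that $R = \id$ forces the two inner products to agree. With that identification in hand, the corollary is a direct substitution.
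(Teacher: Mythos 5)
Your proposal is correct and is exactly the paper's derivation: the corollary is obtained by specializing Theorem~\ref{thm:higher-matrix-tree} to $r \equiv 1$, whence $R = \mathrm{id}$, $\mathcal{L}^R = \mathcal{L} = \partial\partial^*$, and $w_T = \theta_T^2$. (Incidentally, the paper's remark preceding the corollary misprints this as ``$w_T = \theta^2_X$''; your $w_T = \theta_T^2$ is the intended reading.)
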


\begin{rem} Variations of Corollary \ref{cor:higher-matrix-tree}
have appeared in \cite{Kalai}, \cite{Petersson}, \cite{DKM} and \cite{Lyons} (note: all but
the last reference assume additional conditions on $X$, and each work
utilizes its own notion of spanning tree).
When $d=1$, we have $\theta_T = 1 =\theta_X$ and
$\mu_X$ is the number of vertices of $X$, so we obtain the classical Kirchhoff matrix-tree theorem. Theorem \ref{thm:higher-matrix-tree} is actually a special case of a more general result, Theorem \ref{thm:general-higher-matrix-tree} below.
\end{rem}

Corollary \ref{cor:higher-matrix-tree} admits the following 
simpler reformulation (cf. Corollary \ref{cor:Lmu} below).

\begin{bigadd} \label{addC}
\[
\det \cal L = \sum_T \cal L^T = \sum_T \mu_T \, ,
\]
where $\cal L^T = \partial\partial^*_T\: B_{d-1}(T;\Bbb R)@> \cong >> B_{d-1}(T;\Bbb R)$.
\end{bigadd}

\subsection*{Reidemeister torsion counts spanning trees}
Milnor \cite{Milnor} introduced  the notion of Reidemeister torsion $\tau(C_\ast)$ of
a not necessarily acyclic finite chain complex $C_\ast$ over a field
in which a preferred basis is chosen for $C_\ast$ as well as its homology. When $C_\ast$ is
the real chain complex of a finite CW complex $X$, we will establish a connection between
the torsion and the enumeration of spanning trees on the skeleta of $X$.

Suppose $X$ is a  finite, connected CW complex. We give $C_\ast(X;\R)$ the preferred
basis given by its set of cells. We also choose a basis for $H_\ast(X;\R)$
by selecting a basis for the torsion free part of each integral homology group
$H_\ast(X;\Z)$. Such a basis is called a {\it combinatorial basis} for the homology 
and we will denote it by $\mathfrak h$. The definition of 
Reidemeister torsion $\tau(X;\mathfrak h)$ is given in \S\ref{sec:torsion-tree} below. 

For $k \ge 0$, we define the following quantities:
\begin{itemize}
\item $\cal T_k =$ the set of spanning trees of $X^{(k)}$ (for this we require $k >0$).
\item $\mu_{k} =$ the square of the covolume of the lattice 
$B_{k}(X;\Z) \subset B_{k}(X;\R)$ with
respect to the inner product given by restricting the standard inner product on
 $C_{k}(X;\R)$. 
 \item $H_k(X;\Z)_0 =$ the image of the evident homomorphism
 $H_k(X;\Z) \to H_k(X;\R)$.
 \item $\eta_k =$ the square of the covolume of the lattice
 $H_k(X;\Z)_0 \subset H_k(X;\R)$, where we give $H_k(X;\R)$ the inner product
 defined by identifying the latter with the orthogonal complement of $B_k(X;\R) \subset Z_k(X;\R)$ using the inner product arising from the standard one on $C_k(X;\R)$.
\item $\theta_{k}=$ 
the order of the torsion subgroup of $H_{k}(X;\Z)$. 
\end{itemize}
With respect to the above, we set
\[
\delta_k := \frac{\eta_k\mu_k}{\theta^2_k}.
\]
Then $\delta_k$ is defined entirely in terms of $X$.

\begin{bigthm}[Torsion-Tree Theorem]\label{thm:torsion-tree}
For a finite, connected CW complex $X$, we have
\begin{equation*}
\tau^2(X;\mathfrak h) = 
{\prod_{k \ge 0} 
 (\delta_{k} \sum_{T\in \cal T_{k+1}} \theta^2_T})^{(-1)^k}\,  ,
\end{equation*}
where $\theta^2_T$ denotes the order of
of the torsion subgroup of $H_{k}(T;\Z)$ for $T\in \cal T_{k+1}$.
\end{bigthm}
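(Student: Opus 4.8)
The plan is to reduce the statement to a clean identity between the torsion and the determinants of the combinatorial Laplacians on boundaries, and then to feed each Laplacian into the Higher Matrix-Tree Theorem. For $k\ge 0$ let $\partial_{k+1}^*$ be the formal adjoint of $\partial_{k+1}$ for the standard inner products, and set $\mathcal{L}_k := \partial_{k+1}\partial_{k+1}^*$ restricted to its invariant subspace $B_k(X;\R)$. Since the $(k{+}1)$-skeleton $X^{(k+1)}$ has the same cells as $X$ in degrees $\le k+1$, this $\mathcal{L}_k$ is precisely the operator $\cal L$ of Corollary \ref{cor:higher-matrix-tree} for the complex $X^{(k+1)}$; moreover $B_k(X^{(k+1)};\Z)=B_k(X;\Z)$ and the inclusion induces $H_k(X^{(k+1)};\Z)\cong H_k(X;\Z)$, so $\mu_{X^{(k+1)}}=\mu_k$ and $\theta_{X^{(k+1)}}=\theta_k$. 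Corollary \ref{cor:higher-matrix-tree} then yields $\det\mathcal{L}_k=(\mu_k/\theta_k^2)\sum_{T\in\mathcal{T}_{k+1}}\theta_T^2$, and since $\delta_k=\eta_k\mu_k/\theta_k^2$ this gives $\delta_k\sum_{T}\theta_T^2=\eta_k\det\mathcal{L}_k$. It therefore suffices to prove
\[
\tau^2(X;\mathfrak h)=\prod_{k\ge 0}(\eta_k\det\mathcal{L}_k)^{(-1)^k}.
\]

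To establish this identity I would compute Milnor's torsion through the orthogonal (Hodge) decomposition $C_k(X;\R)=B_k\oplus\mathcal{H}_k\oplus B^{k}$, where $\mathcal{H}_k=Z_k\cap B_k^\perp$ is the space of harmonic chains and $B^{k}=\im\partial_k^*=Z_k^\perp$. For each $k$ fix a basis $b_k$ of $B_k$; lift the combinatorial homology basis $\mathfrak h_k$ to its harmonic representatives $\tilde{\mathfrak h}_k\in\mathcal{H}_k$, and lift $b_{k-1}$ to the unique $\tilde b_{k-1}\in B^{k}$ with $\partial\tilde b_{k-1}=b_{k-1}$. Then $(b_k,\tilde{\mathfrak h}_k,\tilde b_{k-1})$ is a basis of $C_k(X;\R)$ whose three blocks lie in mutually orthogonal subspaces. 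Because the cell basis is orthonormal, the square of the change-of-basis determinant equals the Gram determinant of the new basis, and orthogonality of the blocks makes it factor as $G(b_k)\,G(\tilde{\mathfrak h}_k)\,G(\tilde b_{k-1})$. Thus $\tau^2$ becomes an alternating product of these three families of Gram determinants.

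Two identifications then finish the computation. First, $G(\tilde{\mathfrak h}_k)$ is, by definition, the squared covolume of the lattice $H_k(X;\Z)_0$ measured in the harmonic inner product (the combinatorial basis $\mathfrak h_k$ maps to a $\Z$-basis of this lattice), so $G(\tilde{\mathfrak h}_k)=\eta_k$. Second, the boundary contributions telescope: the block $G(b_k)$ enters through the $C_k$-factor while $G(\tilde b_k)$ enters through the $C_{k+1}$-factor with opposite sign, so each $k$ contributes $(G(b_k)/G(\tilde b_k))^{(-1)^k}$. Writing $A=\partial_{k+1}\colon B^{k+1}\xrightarrow{\cong}B_k$, the chosen lift is $\tilde b_k=A^{-1}b_k\in B^{k+1}$, whence $G(\tilde b_k)=|\det A|^{-2}G(b_k)$ and $G(b_k)/G(\tilde b_k)=|\det A|^2=\det(AA^*)$. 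A short check gives $A^*=\partial_{k+1}^*|_{B_k}$, so $AA^*=\mathcal{L}_k$ and the $k$-th boundary factor is exactly $(\det\mathcal{L}_k)^{(-1)^k}$. Multiplying the harmonic and boundary factors produces the displayed identity, and combining with the reduction above gives the theorem.

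The main obstacle I anticipate is bookkeeping rather than conceptual. One must pin down the sign/grading convention in the definition of $\tau$ so that the exponents assemble into $(-1)^k$ (squaring removes the overall sign of $\tau$ but not the grading coming from the chain degree), and one must ensure that the \emph{same} basis $b_k$ is used both in the $C_k$-factor and, via its lift, in the $C_{k+1}$-factor, so that the boundary Gram determinants genuinely telescope. I would also check the degenerate degrees in which $X$ has no $(k{+}1)$-cells, where $B_k=0$, $\det\mathcal{L}_k=1$, and $\mathcal{T}_{k+1}=\{X^{(k)}\}$, to confirm consistency with the stated formula.
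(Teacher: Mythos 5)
Your proposal is correct and follows essentially the same route as the paper: the identity $\tau^2(X;\mathfrak h)=\prod_{k\ge 0}(\eta_k\det\mathcal L_k)^{(-1)^k}$ is exactly the paper's Theorem \ref{thm:tor-det-L} specialized to $W=0$, proved there by the same orthogonal splitting (your lift $\tilde b_{k-1}=A^{-1}b_{k-1}$ coincides with the paper's splitting $s_{k-1}=\partial^*\mathcal L_{k-1}^{-1}$) and the same Gram-determinant telescoping, after which both arguments invoke Corollary \ref{cor:higher-matrix-tree} applied to each skeleton $X^{(k+1)}$. The only difference is that the paper carries general weights $W$ through the computation, which you do not need for this statement.
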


\begin{convent}
We assume the reader is familiar with basic
linear algebra as well as a first year course on algebraic topology.
The topological spaces of this paper are equipped with 
preferred CW structure and when we write $H_\ast(X;A)$, we mean cellular homology 
with coefficients in an abelian group $A$ (in practice, $A$ is either $\Z$ or $\R$).
If $X$ is a CW complex, we write $X^{(k)}$ for its $k$-skeleton and $X_k$ for its
set of $k$ cells. Thus 
\[
X^{(k)} = X^{(k-1)} \cup (X_k \times D^k)\, ,
\]
where the union is amalgamated along the attaching map $X \times S^{k-1} \to X^{(k-1)}$. The 
$k$-th Betti number $\beta_k(X)$ is defined to be the rank of the vector space $H_k(X;\R)$.
If $A$ is a commutative ring, then the $k$-th real chain group $C_k(X;A)$ is by definition the relative homology group $H_k(X^{(k)},X^{(k-1)};A)$, which 
is just the free $A$-module having basis $X_k$. 
\end{convent}

\begin{out} In \S\ref{sec:spanning-trees} we develop basic results about higher
dimensional spanning trees. In \S\ref{sec:proof-thm-network}
we prove Theorem \ref{bigthm:network} and Addendum \ref{add:Kirchhoff_formula}.
In \S\ref{sec:weak-matrix-tree} we prove Theorem  \ref{thm:higher-matrix-tree} up to identification of the normalizing constant $\gamma_X$. In \S\ref{sec:low-temp} we introduce the low temperature limit and use it to show that for sufficiently well-behaved $W$, 
the determinant
of $\cal L$ tends in the low temperature limit to the determinant of $\cal L^T$,
where the latter is defined using a spanning tree $T$ in place of $X$. This result is
employed in \S\ref{app:extras} to identify $\gamma_X$, thereby completing the proof
of Theorem  \ref{thm:higher-matrix-tree}; in so doing we
 generalize Theorem \ref{thm:higher-matrix-tree} to Theorem
\ref{thm:general-higher-matrix-tree}.
Lastly, in \S\ref{sec:torsion-tree}, we outline Milnor's definition of Reidemeister
torsion and prove Theorem \ref{thm:torsion-tree}. Also, in  Theorem
\ref{thm:last}, we obtain a different expression for
the Reidemeister torsion that is expressed in terms of both spanning tree and homology truncation data for $X$.
\end{out}

\begin{acks} The authors
thank the Los Alamos Center for Nonlinear Studies and the T-4 division for partially supporting this research. This material is based upon work supported by the National
Science Foundation under Grant Nos.\ CHE-1111350, DMS-0803363
and DMS-1104355. The third author wishes to thank Paul Kirk for discussions related to
the beginning of \S\ref{sec:torsion-tree} as well as Paul Penfield for providing him with a translation to Weyl's article \cite{Weyl}.
\end{acks}

\section{Spanning Trees in higher dimensions \label{sec:spanning-trees}}

\begin{defn} Let $X$ be a finite connected CW complex.
\label{def:ess} A $k$-cell $b \in X_k$ is said to be {\em essential} if there exists a
$k$-cycle $z \in Z_k(X;\R)$
such that $\langle z,b\rangle \neq 0$.
\end{defn}

\begin{lem} \label{lem:add_or_remove}
\label{lem:add-rem} Assume in addition $X$ has dimension $d$. Then adding or removing an essential $d$-cell from $X$ increases or decreases $\beta_d(X)$ by one, respectively, and fixes $\beta_{d-1}(X)$.
\end{lem}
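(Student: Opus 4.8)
The plan is to reduce everything to a short rank--nullity computation, treating the removal case directly and deducing the addition case by reversing the roles of the two complexes. Write $X' = X \setminus b$ for the subcomplex obtained by deleting the essential $d$-cell $b$; since $b$ is top-dimensional, $X'$ is again a CW complex with $\dim X' \le d$, its $(d-1)$-skeleton agrees with that of $X$, and the attaching maps of the remaining $d$-cells are unchanged. Consequently $C_{k}(X';\R) = C_k(X;\R)$ together with the same boundary operators for all $k \le d-1$, while $C_d(X';\R)$ is the hyperplane $W := \{c \in C_d(X;\R) : \langle c,b\rangle = 0\}$ and $\partial'_d := \partial|_W$.

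First I would record two elementary facts. Because $X$ has dimension $d$, there are no $(d+1)$-cells, so $B_d(X;\R) = 0$ and hence $\beta_d(X) = \dim Z_d(X;\R)$ (and likewise for $X'$, which also has no $(d+1)$-cells). Because $\partial_{d-1}$ and its domain are untouched by the deletion, $Z_{d-1}(X';\R) = Z_{d-1}(X;\R)$, so the only possible change in $\beta_{d-1}$ comes from a change in $\dim B_{d-1}(X;\R) = \dim \im \partial_d$.

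The crux is the behavior of $d$-cycles. By definition $Z_d(X';\R) = W \cap Z_d(X;\R)$, which is the kernel of the linear functional $c \mapsto \langle c,b\rangle$ restricted to $Z_d(X;\R)$. Essentiality of $b$ says exactly that this functional is not identically zero on $Z_d(X;\R)$, so its kernel has codimension precisely one: $\dim Z_d(X';\R) = \dim Z_d(X;\R) - 1$. This already gives $\beta_d(X') = \beta_d(X) - 1$. For the $(d-1)$-Betti number I would apply rank--nullity to $\partial_d$ and to $\partial'_d$: with $n = |X_d|$ one gets $\dim \im \partial_d = n - \dim Z_d(X;\R)$ and $\dim \im \partial'_d = (n-1) - \dim Z_d(X';\R) = n - \dim Z_d(X;\R)$, so $\dim B_{d-1}(X';\R) = \dim B_{d-1}(X;\R)$ and therefore $\beta_{d-1}(X') = \beta_{d-1}(X)$.

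Finally, the addition statement is the removal statement read backwards: if $b$ is an essential $d$-cell of the enlarged complex $X'' = X \cup b$, then $X = X'' \setminus b$, and applying the computation above to $X''$ gives $\beta_d(X) = \beta_d(X'') - 1$ and $\beta_{d-1}(X) = \beta_{d-1}(X'')$, that is, adjoining $b$ raises $\beta_d$ by one and fixes $\beta_{d-1}$. I expect no serious obstacle; the only point requiring genuine care is the codimension-one claim for $Z_d$, where essentiality must be invoked to exclude the degenerate alternative that $\langle -,b\rangle$ vanishes on all of $Z_d(X;\R)$ (which would instead leave $\beta_d$ fixed and force a drop in $\beta_{d-1}$).
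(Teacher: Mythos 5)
Your argument is correct, and every step holds up: essentiality of $b$ is exactly the statement that the functional $\langle\,\cdot\,,b\rangle$ is nonzero on $Z_d(X;\R)$, so its kernel $Z_d(X';\R)$ has codimension one there, and the rank--nullity bookkeeping for $\partial_d$ versus $\partial_d|_W$ then forces $\dim B_{d-1}$ (hence $\beta_{d-1}$) to be unchanged; the reduction of the addition case to the removal case applied to $X''=X\cup b$ is also the right way to read the statement. The paper reaches the same conclusion by a different packaging: it takes the long exact sequence of the pair $(Y^j,Y^{j-1})$ in \emph{integral} homology, where $H_d(Y^j,Y^{j-1})\cong\Z$ is generated by the cell $b$, and splits it into the two short exact sequences $0\to H_d(Y^{j-1})\to H_d(Y^j)\to\im\partial_*\to 0$ and $0\to\Z/\im\partial_*\to H_{d-1}(Y^{j-1})\to H_{d-1}(Y^j)\to 0$; essentiality makes $\im\partial_*$ a nontrivial (hence full-rank) subgroup of $\Z$, which gives the two Betti-number statements at once. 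Your route is more elementary --- pure linear algebra over $\R$, no exact sequences --- and is entirely sufficient for a statement that only concerns Betti numbers. What the paper's integral version buys in exchange is the torsion information carried by the finite group $\Z/\im\partial_*$ in the second sequence, which reappears later (e.g.\ in the short exact sequence $0\to\Z/t_b\Z\to H_{d-1}(T;\Z)\to H_{d-1}(T\cup b;\Z)\to 0$) and is invisible to a real-coefficient argument.
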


\begin{proof}
Construct a decreasing filtration $Y^i$ on $X$ by removing the $d$-cells of $X$ one at a time,
$X = Y^n \supset Y^{n-1} \supset  \ldots \supset Y^0 = X_{d-1}$.  Then for $1\le j \le n$,
we have
an exact sequence in homology
\[
  0 \to H_d (Y^{j-1}) \to H_d(Y^j) \stackrel{\partial_*}{\lra} \Z \to H_{d-1}(Y^{j-1}) \to H_{d-1}(Y^j) \to 0
\]
The above factors into two short exact sequences
\begin{align*}
& 0 \to H_d(Y^{j-1}) \to H_d(Y^j) \to \im \partial_* \to 0 \\
& 0 \to \Z/\im \partial_*  \to H_{d-1}(Y^{j-1}) \to H_{d-1}(Y^j) \to 0\, ,
\end{align*}
where $\im \partial_*$ is the image of $\partial_\ast$.
 If the attached cell is essential, then
$\im \partial_*$ is a nontrivial subgroup of $\Z$.
Therefore, the first sequence implies $\beta_d(Y^j) = \beta_d(Y^{j-1}) + 1$,
while the second implies $\beta_{d-1}(Y^j) = \beta_{d-1}(Y^{j-1})$. We may view $Y^j$ as a complex
with an additional essential cell, or $Y^{j-1}$ as a complex with an essential cell removed.
\end{proof}

\begin{lem} \label{lem:spanning-tree-exist} $X$ has a spanning tree.
\end{lem}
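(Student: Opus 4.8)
The plan is to produce a spanning tree by deleting top-dimensional cells from $X$ one at a time, using Lemma~\ref{lem:add_or_remove} to keep track of the Betti numbers. Since every $d$-cell is top-dimensional, its attaching map lands in $X^{(d-1)}$, so deleting $d$-cells never disturbs the $(d-1)$-skeleton; hence the condition $X^{(d-1)} \subset T$ from Definition~\ref{def:spa-t} will hold automatically for any subcomplex $T$ obtained in this way, and $\beta_{d-1}$ becomes the only $(d-1)$-dimensional datum that needs monitoring.

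First I would observe that whenever an intermediate subcomplex $Y$ with $X^{(d-1)} \subseteq Y \subseteq X$ has $\beta_d(Y) > 0$, it must contain an essential $d$-cell. Indeed, for a complex of dimension $d$ there are no $(d+1)$-cells, so $H_d(Y;\R) = Z_d(Y;\R)$, and $\beta_d(Y) > 0$ forces the existence of a nonzero real $d$-cycle $z$; choosing any $d$-cell $b$ with $\langle z,b\rangle \ne 0$ exhibits $b$ as essential. By Lemma~\ref{lem:add_or_remove}, removing such a $b$ decreases $\beta_d$ by exactly one while leaving $\beta_{d-1}$ unchanged.

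I would then iterate this deletion. Starting from $Y = X$ and repeatedly removing an essential $d$-cell, each step lowers the nonnegative integer $\beta_d$ by one and fixes $\beta_{d-1}$; after at most $\beta_d(X)$ steps the process halts at a subcomplex $T$ with $\beta_d(T) = 0$ and $\beta_{d-1}(T) = \beta_{d-1}(X)$. This secures two of the three defining conditions directly.

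The one point requiring care, and the step I would flag as the crux, is upgrading $\beta_d(T) = 0$ to the integral statement $H_d(T;\Z) = 0$ demanded by Definition~\ref{def:spa-t}. Here top-dimensionality is exactly what is needed: since $\dim T = d$, we have $H_d(T;\Z) = Z_d(T;\Z) \subseteq C_d(T;\Z)$, a subgroup of a free abelian group and therefore itself free abelian. A free abelian group of rank $\beta_d(T) = 0$ is trivial, so $H_d(T;\Z) = 0$. Thus $T$ satisfies all three conditions of Definition~\ref{def:spa-t} and is a spanning tree, completing the argument.
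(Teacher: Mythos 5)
Your proof is correct and follows essentially the same route as the paper: repeatedly delete an essential $d$-cell, using Lemma~\ref{lem:add_or_remove} to track $\beta_d$ and $\beta_{d-1}$, until $\beta_d$ vanishes. You supply two details the paper leaves implicit --- that $\beta_d(Y)>0$ guarantees an essential cell exists, and that $\beta_d(T)=0$ upgrades to $H_d(T;\Z)=0$ because $H_d(T;\Z)=Z_d(T;\Z)$ is free abelian --- both of which are worth making explicit.
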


\begin{proof} If $H_d(X;\R) = 0$ then $X$ is a spanning tree. If $H_d(X;\R) \ne 0$,
 then we can pick an essential $d$-cell and remove it, decreasing
$\beta_d(X)$ by
one. Repeat this process until $\beta_d$ is zero.
Evidently, the resulting subcomplex $T$ contains
$X_{d-1}$ and by Lemma~\ref{lem:add_or_remove}, we have
$\beta_{d-1}(T) = \beta_{d-1}(X)$. Hence, $T$ is a spanning tree.
\end{proof}

The following is straightforward, and its proof is left to the reader.

\begin{lem} Any spanning tree for $X$ may be
obtained by removing essential $d$-cells.
Furthermore, if $T$ is a spanning tree of $X$,
the  number of essential $d$-cells withdrawn to construct $T$ is
equal to $\beta_d(X)$.
\end{lem}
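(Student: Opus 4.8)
The plan is to separate the statement into a counting part and an ordering part, since ``essential'' is defined relative to the ambient complex and therefore depends on the order in which cells are withdrawn.

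First I would pin down the number of removed cells. Since a spanning tree satisfies $X^{(d-1)} \subset T$, the complex $T$ is obtained from $X$ by deleting a set $S$ of $d$-cells, and $X$ and $T$ share the same $(d-1)$-skeleton. Because $X$ has dimension $d$, we have $Z_d(X;\R) = H_d(X;\R)$, so $\dim Z_d(X;\R) = \beta_d(X)$; by rank--nullity, $|X_d| = \beta_d(X) + \dim B_{d-1}(X;\R)$. For $T$, the hypothesis $H_d(T;\Z) = 0$ forces $Z_d(T;\R) = 0$, so $\partial$ is injective on $C_d(T;\R)$ and $|T_d| = \dim B_{d-1}(T;\R)$. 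Finally, since $T$ and $X$ have the same $(d-1)$-cycles (the $(d-1)$-skeleta agree) and $\beta_{d-1}(T) = \beta_{d-1}(X)$, the boundary spaces have equal dimension, $\dim B_{d-1}(T;\R) = \dim B_{d-1}(X;\R)$. Subtracting gives $|S| = |X_d| - |T_d| = \beta_d(X)$.

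For the ordering part, I would show that the cells of $S$ can be withdrawn one at a time, each essential in the complex from which it is removed. The key observation is that if $Y$ is any intermediate complex with $T \subseteq Y \subseteq X$ and $\beta_d(Y) > 0$, then $Y$ contains an essential $d$-cell lying in $S$: choosing a nonzero cycle $z \in Z_d(Y;\R)$, its support cannot lie entirely in $T_d$ (else $z \in Z_d(T;\R) = 0$), so $z$ has a nonzero coefficient on some $b \in S \cap Y_d$, making $b$ essential in $Y$. Removing such a $b$ and applying Lemma~\ref{lem:add_or_remove} decreases $\beta_d$ by one while fixing $\beta_{d-1}$. Iterating, the number of remaining $S$-cells and the value of $\beta_d$ both drop by one at each step, so after exactly $\beta_d(X)$ withdrawals we reach $\beta_d = 0$ and have removed all of $S$, arriving at $T$.

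The main obstacle---really the only subtlety---is precisely this relativity of essentiality: a single cell need not be essential in $X$ itself, yet must become essential at the moment it is deleted. The argument above resolves this by using $Z_d(T;\R) = 0$ to guarantee that, whenever cycles remain, at least one survives onto the cells still slated for removal. The counting step then ensures the process terminates exactly when $S$ is exhausted, so no inessential cell is ever removed and the total count is $\beta_d(X)$.
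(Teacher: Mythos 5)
Your proof is correct and complete; the paper itself leaves this lemma to the reader, and your argument fills that gap using exactly the intended tools (rank--nullity for the count, and Lemma~\ref{lem:add_or_remove} together with the observation that a nonzero $d$-cycle of an intermediate complex must be supported on a not-yet-removed non-tree cell, since $Z_d(T;\R)=0$). The handling of the order-dependence of ``essential'' is the one genuine subtlety, and you address it correctly.
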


\begin{lem}
Let $T$ be a spanning tree of $X$ and let $\widetilde{T} = T \cup b$, where $b$
is an essential cell in $\widetilde T$. If $b'$ is an essential $d$-cell of $\widetilde{T}$
different from $b$, then $U := \widetilde{T} \setminus b'$ is a spanning tree.
\end{lem}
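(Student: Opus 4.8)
The plan is to verify the three defining conditions of Definition \ref{def:spa-t} for $U$ directly, controlling the relevant Betti numbers through Lemma \ref{lem:add-rem} and then upgrading the resulting real statement to an integral one by freeness of top homology. Throughout I would regard $\widetilde T$ and $U$ as finite connected CW complexes of dimension $d$, so that Lemma \ref{lem:add-rem} applies to each with $\widetilde T$ (respectively $\widetilde T$) playing the role of the ambient complex.

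First I would record the Betti numbers. Since $T$ is a spanning tree, $H_d(T;\Z) = 0$ and hence $\beta_d(T) = 0$, while $\beta_{d-1}(T) = \beta_{d-1}(X)$. Because $b$ is essential in $\widetilde T = T \cup b$, Lemma \ref{lem:add-rem} gives $\beta_d(\widetilde T) = \beta_d(T) + 1 = 1$ and $\beta_{d-1}(\widetilde T) = \beta_{d-1}(T) = \beta_{d-1}(X)$. (This also confirms $b \notin T$, since a spanning tree has no nonzero $d$-cycles and hence no essential $d$-cells.) Applying Lemma \ref{lem:add-rem} again to the removal of the essential cell $b'$ from $\widetilde T$, I obtain $\beta_d(U) = \beta_d(\widetilde T) - 1 = 0$ and $\beta_{d-1}(U) = \beta_{d-1}(\widetilde T) = \beta_{d-1}(X)$. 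Note that essentiality of $b'$ is exactly what forces the drop in $\beta_d$, and the hypothesis $b' \neq b$ guarantees $U$ is a spanning tree genuinely different from $T$.

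With these computations in hand the three conditions follow quickly. The skeleton condition holds because $U$ is obtained from $\widetilde T \supseteq T \supseteq X^{(d-1)}$ by deleting a single $d$-cell, which leaves the $(d-1)$-skeleton untouched, so $X^{(d-1)} \subseteq U$. The condition $\beta_{d-1}(U) = \beta_{d-1}(X)$ was just computed. The only condition needing care is $H_d(U;\Z) = 0$, since Lemma \ref{lem:add-rem} delivers only the real statement $\beta_d(U) = 0$. To promote this I would note that $U$ has dimension $d$, so $C_{d+1}(U;\Z) = 0$ and therefore $H_d(U;\Z) = Z_d(U;\Z)$ is a subgroup of the free abelian group $C_d(U;\Z)$, hence itself free abelian; a free abelian group of rank $\beta_d(U) = 0$ is trivial, giving $H_d(U;\Z) = 0$.

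I expect the only genuinely non-formal point to be this last passage from real to integral coefficients, as everything else is bookkeeping with Lemma \ref{lem:add-rem}. The one thing worth double-checking is that the hypotheses of that lemma really do apply to $\widetilde T$ and $U$ verbatim, in particular connectivity: for $d \ge 2$ this is automatic from $X^{(1)} \subseteq X^{(d-1)} \subseteq U$, while for $d = 1$ it follows from $\beta_0(U) = \beta_0(X) = 1$. Once connectivity is secured, the essentiality of $b$ and $b'$ translates into the stated changes in $\beta_d$, and the argument closes.
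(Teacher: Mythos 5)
Your proof is correct and follows essentially the same route as the paper: two applications of Lemma \ref{lem:add_or_remove} (once for adding $b$, once for removing $b'$) together with the observation that the $(d-1)$-skeleton is untouched. Your explicit upgrade from $\beta_d(U)=0$ to $H_d(U;\Z)=0$ via freeness of top-dimensional cycles is a detail the paper leaves implicit, but it is the same argument.
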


\begin{proof}
Since $b'$ is essential, Lemma~\ref{lem:add_or_remove} implies $H_d(U)$ has rank zero.
This lemma also implies $\beta_{d-1}(U) = \beta_{d-1}(\widetilde{T}) = 
\beta_{d-1}(T)$. Since our construction
leaves the $d-1$ skeleton fixed, $U$ is a spanning tree.
\end{proof}

\begin{lem}\label{lem:torsion} Let $T$ be a spanning tree of $X$ and let $b 
\in X_{d} \setminus T_{d}$. Then
$[\partial b]$ generates
a torsion element of $H_{d-1}(T;\Bbb Z)$.
\end{lem}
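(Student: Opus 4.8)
The plan is to exploit the long exact homology sequence of the pair $(\widetilde T, T)$, where $\widetilde T = T\cup b$. First I would check that $[\partial b]$ genuinely defines a class in $H_{d-1}(T;\Z)$: since a spanning tree satisfies $X^{(d-1)}\subset T$, the chain $\partial b$ lies in $C_{d-1}(T;\Z)$, and $\partial(\partial b)=0$ exhibits it as a $(d-1)$-cycle of $T$. So it makes sense to ask whether its class is torsion.

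Next I would write down the relevant portion of the long exact sequence. Because $\widetilde T$ is obtained from $T$ by attaching the single $d$-cell $b$, the relative chain complex $C_\ast(\widetilde T, T;\Z)$ is the free abelian group on $b$ concentrated in degree $d$; hence $H_d(\widetilde T, T;\Z)\cong \Z$ and $H_{d-1}(\widetilde T, T;\Z)=0$. Using the spanning-tree hypothesis $H_d(T;\Z)=0$, the sequence reduces to
\[
0 \to H_d(\widetilde T;\Z) \to \Z \xrightarrow{\ \partial_\ast\ } H_{d-1}(T;\Z),
\]
where the connecting homomorphism $\partial_\ast$ carries the generator of $\Z$ (the relative cycle $b$) to $[\partial b]$.

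The crux is then a rank count on $\partial_\ast$. By exactness, $H_d(\widetilde T;\Z)=\ker\partial_\ast$; but $H_d(\widetilde T;\Z)=Z_d(\widetilde T;\Z)$ is infinite cyclic, as recorded in the construction of $\bar T$. Thus $\ker\partial_\ast$ is a nonzero subgroup of $\Z$, necessarily of the form $n\Z$ with $n\ge 1$, so the image of $\partial_\ast$ — which is precisely the cyclic subgroup generated by $[\partial b]$ — is isomorphic to $\Z/n\Z$ and hence finite. Therefore $[\partial b]$ is a torsion element of $H_{d-1}(T;\Z)$, as claimed.

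I do not anticipate a serious obstacle; the only point requiring care is justifying that $H_d(\widetilde T;\Z)$ is nonzero, equivalently that $\ker\partial_\ast\ne 0$. This is exactly the statement that adjoining $b$ to $T$ creates a new $d$-cycle, which holds because $b$ is essential in $\widetilde T$: a generating cycle $z$ of $H_d(\widetilde T;\Z)$ cannot lie in $T$, where $Z_d(T;\Z)=0$, so it must satisfy $\langle z, b\rangle\ne 0$. One could alternatively bypass the connecting-map bookkeeping: since $b$ is essential, Lemma~\ref{lem:add-rem} gives $\beta_{d-1}(\widetilde T)=\beta_{d-1}(T)$, and comparing this with the cokernel description $H_{d-1}(\widetilde T;\Z)=H_{d-1}(T;\Z)/\langle[\partial b]\rangle$ forces $[\partial b]$ to have rank zero, i.e.\ to be torsion.
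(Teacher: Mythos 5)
Your long-exact-sequence setup is correct as far as it goes: the sequence of the pair $(\widetilde T,T)$ together with $H_d(T;\Z)=0$ does reduce the lemma to the single claim that $H_d(\widetilde T;\Z)\neq 0$, equivalently $\ker\partial_*\neq 0$. But your justification of that claim is circular. You argue that a generating cycle $z$ of $H_d(\widetilde T;\Z)$ must satisfy $\langle z,b\rangle\neq 0$ because it cannot lie in $T$ --- which is a valid deduction \emph{only if} such a nonzero $z$ exists, i.e.\ only if $H_d(\widetilde T;\Z)\neq 0$, which is exactly what you are trying to prove. The appeal to the phrase ``infinite cyclic'' in the construction of $\bar T$ does not help, since that assertion is itself unproved at that point and is essentially equivalent to the present lemma. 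The alternative route via Lemma \ref{lem:add-rem} has the same defect: it presupposes that $b$ is essential in $\widetilde T$, which is again the open question. The telltale sign is that your argument never invokes the hypothesis $\beta_{d-1}(T)=\beta_{d-1}(X)$, yet the statement is false without it: take $X=D^2$ with one vertex, one $1$-cell $a$ and one $2$-cell $b$ attached by a degree-one map, and $T'=S^1=X^{(1)}$. Then $H_2(T')=0$ and $X^{(1)}\subset T'$, your exact sequence reads $0\to H_2(X;\Z)\to\Z\to H_1(T';\Z)$ with $H_2(X;\Z)=0$, and $[\partial b]=[a]$ generates $H_1(S^1;\Z)\cong\Z$, which is not torsion. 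So the missing Betti-number condition is precisely what must force $\ker\partial_*\neq 0$.

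The gap is repairable, and the repair is essentially the paper's proof. Since $T\supset X^{(d-1)}$, one has $Z_{d-1}(T;\R)=Z_{d-1}(X;\R)$, so the inclusion induces a surjection $H_{d-1}(T;\R)\to H_{d-1}(X;\R)$; the condition $\beta_{d-1}(T)=\beta_{d-1}(X)$ then makes it an isomorphism. Because $\partial b$ bounds the chain $b$ in $X$, the class $[\partial b]$ maps to zero in $H_{d-1}(X;\R)$, hence is already zero in $H_{d-1}(T;\R)$, hence is torsion in $H_{d-1}(T;\Z)$. (Equivalently, this shows $B_{d-1}(T;\R)=B_{d-1}(X;\R)$, so $\partial b=\partial w$ for some real chain $w\in C_d(T;\R)$ and $b-w$ is the nonzero cycle your argument needs.) If you want to keep your exact-sequence framework, you must insert this step to certify $\ker\partial_*\neq 0$ before concluding.
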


\begin{proof} Since $T$ is a spanning tree, we have that $b$ is attached 
to $T$ along its attaching map $\partial b \to T$. 
Hence, the homology class $[\partial b]$ lies in $H_{d-1}(T;\Bbb Z)$.
The isomorphism $H_{d-1}(T;\R) \cong H_{d-1}(X;\R)$, along with the fact that
$\partial b$ bounds the cellular chain $b$ in $X$, implies $\partial b$ is torsion in $H_{d-1}(T;\Z)$.
\end{proof}

Recall the linear transformation
$
\bar T\: C_d(X;\R) \to Z_d(X;\Bbb R)
$
defined in the introduction, which was defined on essential cells as
\[
\bar T(b) = c/t_b \, ,
\] where $c$ is a generator of $H_d(T\cup b;\Bbb Z)$
and
\[
t_b = \langle c,b\rangle \, ,
\] where the inner product is taken in $C_d(X;\R)$
(here we are using the inclusion $H_d(T\cup b;\R)\subset C_d(X;\R)$ to make sense
of the inner product). Observe that $|t_b|$ is the order of $[\partial b] \in H_{d-1}(T;\Bbb Z)$.

\begin{lem} \label{prop:ij-proportion}  Let $T$ be a spanning tree of $X$, let
$b_i \in X_d\setminus T_d$ and let $b_j$ be
an essential $d$-cell. Then
\[
   \langle\bar T(b_i),b_j\rangle = \frac{t_{b_j}}{t_{b_i}}\, .
\]
\end{lem}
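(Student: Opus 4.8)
The plan is to express both $t_{b_i}$ and $t_{b_j}$ as coefficients of one and the same cycle, namely a fixed generator of $H_d(\widetilde T;\Z)$ where $\widetilde T := T\cup b_i$. Since $T$ is a spanning tree and $b_i\notin T_d$, the group $H_d(\widetilde T;\Z)=Z_d(\widetilde T;\Z)$ is infinite cyclic; fix a generator $c$. By the very definition of $\bar T$, this $c$ is a generator of $H_d(T\cup b_i;\Z)$, so $t_{b_i}=\langle c,b_i\rangle$ and $\bar T(b_i)=c/t_{b_i}$. The whole computation then reduces, by bilinearity, to evaluating a single inner product: $\langle\bar T(b_i),b_j\rangle=\langle c,b_j\rangle/t_{b_i}$.

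First I would dispose of the trivial case $b_j=b_i$, where the asserted identity reads $1=t_{b_i}/t_{b_i}$. Otherwise $b_j$ is an essential $d$-cell of $\widetilde T$ distinct from $b_i$; since every $d$-cycle of $\widetilde T$ is a multiple of $c$, such a $b_j$ lies in the support of $c$ and hence in $T_d$. Now I invoke the preceding lemma: because $b_j$ is essential in $\widetilde T$, the subcomplex $U:=\widetilde T\setminus b_j$ is again a spanning tree of $X$. The crucial observation is that $\widetilde T=U\cup b_j$, so that $H_d(U\cup b_j;\Z)=H_d(\widetilde T;\Z)$ is the very same infinite cyclic group, generated by the same $c$. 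Consequently $t_{b_j}$, which is defined relative to the tree $U$, may be computed from this common generator as $t_{b_j}=\langle c,b_j\rangle$. Substituting yields $\langle\bar T(b_i),b_j\rangle=\langle c,b_j\rangle/t_{b_i}=t_{b_j}/t_{b_i}$, as required.

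The one point that must be handled with care — and the only real obstacle — is the sign bookkeeping concealed in the phrase ``$t_b=\langle c,b\rangle$ for a generator $c$''. Each $t_b$ is only well-defined up to sign in isolation (indeed $|t_b|$ is the order of $[\partial b]$), so the equality of ratios would be meaningless if $t_{b_i}$ and $t_{b_j}$ were read off from independently chosen generators. The argument sidesteps this precisely by using a single generator $c$ of $H_d(\widetilde T;\Z)$ for both quantities, which is legitimate because $T\cup b_i$ and $U\cup b_j$ are literally the same subcomplex $\widetilde T$. Once this identification is in place, the ratio $t_{b_j}/t_{b_i}=\langle c,b_j\rangle/\langle c,b_i\rangle$ is manifestly independent of the choice of $c$, and the identity follows at once; I anticipate no further difficulties.
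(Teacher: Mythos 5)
Your proof is correct and follows the same route as the paper's one-line computation $\langle\bar T(b_i),b_j\rangle = \langle c/t_{b_i},b_j\rangle = \tfrac{1}{t_{b_i}}\langle c,b_j\rangle = t_{b_j}/t_{b_i}$. The care you take to justify the last step --- that $t_{b_j}$ may be read off from the \emph{same} generator $c$ because $T\cup b_i$ and $U\cup b_j$ are literally the same subcomplex, which also fixes the sign ambiguity --- is precisely the point the paper leaves implicit.
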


\begin{proof} $\langle\bar T(b_i),b_j\rangle = \langle c/t_{b_i},b_j\rangle =
1/t_{b_i} \langle c,b_j\rangle = t_{b_j}/t_{b_i}$.
\end{proof}

\begin{cor}If $U$ is a spanning tree obtained by adding and then removing an essential cell from a spanning tree $T$
as above, then
\[
  \langle \bar T(b_i), b_j\rangle  \langle b_i,\bar U(b_j)\rangle = 1 \, .
\]
\end{cor}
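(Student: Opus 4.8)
The plan is to apply Lemma~\ref{prop:ij-proportion} twice and exploit the fact that the cell added to $T$ and the cell removed to form $U$ recombine into one and the same complex. Recall the setup from the preceding lemma: writing $b_i$ for the essential cell added to $T$ and $b_j$ for the essential cell removed, we have $\widetilde T = T \cup b_i$ and $U = \widetilde T \setminus b_j$, where $b_j \in T_d$ and $b_i \notin T_d$. First I would observe that both $T$ and $U$ are spanning trees of $\widetilde T$ (immediate from Lemma~\ref{lem:add_or_remove}, since $\widetilde T$ is again a finite connected CW complex of dimension $d$), so that Lemma~\ref{prop:ij-proportion} may be applied with $\widetilde T$ playing the role of the ambient complex.

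Applying Lemma~\ref{prop:ij-proportion} to the spanning tree $T$, the cell $b_i \in \widetilde T_d \setminus T_d$, and the essential cell $b_j$ yields
\[
\langle \bar T(b_i), b_j \rangle = \frac{t_{b_j}}{t_{b_i}}\, ,
\]
where $t_{b_i} = \langle c, b_i\rangle$ and $t_{b_j} = \langle c, b_j\rangle$ for $c$ a generator of the infinite cyclic group $H_d(T\cup b_i;\Z) = H_d(\widetilde T;\Z)$. Applying the same lemma to the spanning tree $U$, the cell $b_j \in \widetilde T_d \setminus U_d$, and the essential cell $b_i$ yields
\[
\langle \bar U(b_j), b_i \rangle = \frac{\langle c', b_i\rangle}{\langle c', b_j\rangle}\, ,
\]
where $c'$ is a generator of $H_d(U \cup b_j;\Z)$.

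The crux of the argument is the identity $U \cup b_j = (\widetilde T \setminus b_j) \cup b_j = \widetilde T = T \cup b_i$. Hence $H_d(U\cup b_j;\Z)$ and $H_d(T\cup b_i;\Z)$ are literally the same infinite cyclic group, so $c' = \pm c$ for a \emph{single} global sign. Consequently $\langle c', b_i\rangle / \langle c', b_j\rangle = t_{b_i}/t_{b_j}$, the sign cancelling, and therefore
\[
\langle \bar T(b_i), b_j\rangle\, \langle b_i, \bar U(b_j)\rangle = \frac{t_{b_j}}{t_{b_i}}\cdot \frac{t_{b_i}}{t_{b_j}} = 1\, ,
\]
where I have used symmetry of the inner product to identify $\langle b_i, \bar U(b_j)\rangle$ with $\langle \bar U(b_j), b_i\rangle$. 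The only point requiring care—and the main, if minor, obstacle—is the bookkeeping of which cell plays the role of the ``denominator'' in each application of Lemma~\ref{prop:ij-proportion}, together with the verification that $c$ and $c'$ agree up to a single sign, so that the two sign ambiguities cancel rather than compound.
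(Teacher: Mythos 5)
Your proof is correct and is essentially the argument the paper intends: the corollary is stated as an immediate consequence of Lemma~\ref{prop:ij-proportion}, obtained by applying that lemma once to $T$ (with $b_i\notin T_d$) and once to $U$ (with $b_j\notin U_d$), and observing that $T\cup b_i=U\cup b_j=\widetilde T$ so the two generators agree up to a sign that cancels in each ratio. Your bookkeeping of which cell is added and which is removed, and of the sign ambiguity in the generator, matches the paper's setup exactly.
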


\begin{lem}  For an essential $d$-cell $b$, the class $[\partial b] \in H_{d-1}(T;\Z)$ is a torsion element
of order $|t_b|$. In particular, there is a short exact sequence
\[
0\to \Z/t_b \Z \to H_{d-1}(T;\Z) \to H_{d-1}(T\cup b ;\Z) \to 0 \,.
\]
\end{lem}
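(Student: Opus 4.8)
The plan is to read off both assertions from the long exact sequence in integral homology of the CW pair $(\widetilde T, T)$, where $\widetilde T = T\cup b$. First I would observe that collapsing $T$ to a point turns $\widetilde T$ into a $d$-sphere, since $b$ is a single $d$-cell attached along its map $\partial b \to T$; hence $H_k(\widetilde T, T;\Z) \cong \widetilde H_k(S^d;\Z)$, which is $\Z$ for $k = d$ and vanishes otherwise. Feeding this into the long exact sequence, and using that $H_d(T;\Z) = 0$ because $T$ is a spanning tree, the sequence collapses to the four-term exact sequence
\[
0 \to H_d(\widetilde T;\Z) \xrightarrow{\alpha} \Z \xrightarrow{\partial_*} H_{d-1}(T;\Z) \xrightarrow{i_*} H_{d-1}(\widetilde T;\Z) \to 0,
\]
where the middle $\Z \cong H_d(\widetilde T, T;\Z)$ is generated by the relative fundamental class of $b$, and the connecting map $\partial_*$ carries that generator to $[\partial b]$.

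Next I would pin down the two leftmost terms. Since $b$ is essential, Lemma~\ref{lem:add_or_remove} gives $\beta_d(\widetilde T) = \beta_d(T) + 1 = 1$, and as $\widetilde T$ is $d$-dimensional, $H_d(\widetilde T;\Z) = Z_d(\widetilde T;\Z)$ is a subgroup of a free abelian group, hence free; therefore $H_d(\widetilde T;\Z) \cong \Z$. Taking a generator $c$ of $Z_d(\widetilde T;\Z)$ — precisely the cycle used to define $t_b$ — and identifying $H_d(\widetilde T, T;\Z)$ with $\Z$ via the coefficient of $b$, the map $\alpha$ sends $c$ to $\langle c, b\rangle = t_b$. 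Thus $\alpha$ is multiplication by $t_b$ and $\im \alpha = t_b\Z$.

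The two conclusions then follow from exactness. Exactness at the middle $\Z$ gives $\ker \partial_* = \im\alpha = t_b\Z$, so $\partial_*$ descends to an injection $\Z/t_b\Z \hookrightarrow H_{d-1}(T;\Z)$ whose image is the cyclic subgroup generated by $[\partial b]$; this is exactly the statement that $[\partial b]$ is torsion of order $|t_b|$. Exactness at $H_{d-1}(T;\Z)$ together with $H_{d-1}(\widetilde T, T;\Z) = 0$ shows $i_*$ is surjective with kernel $\im\partial_*$, which yields the claimed short exact sequence
\[
0 \to \Z/t_b\Z \to H_{d-1}(T;\Z) \xrightarrow{i_*} H_{d-1}(\widetilde T;\Z) \to 0.
\]

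The only genuinely non-formal step is the identification of $\alpha$ with multiplication by $t_b$: this requires matching the generator $c$ of $Z_d(\widetilde T;\Z)$ appearing in the definition of $t_b$ with the chain-level description of $H_d(\widetilde T;\Z) \to H_d(\widetilde T, T;\Z)$ as ``take the coefficient of $b$,'' and confirming that $H_d(\widetilde T;\Z)$ is torsion-free of rank one so that $\alpha$ is injective. Everything else is routine bookkeeping of the long exact sequence.
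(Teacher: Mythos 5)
Your proof is correct, and it is organized differently from the paper's. The paper argues at the chain level: it starts from the order $t$ of the torsion class $[\partial b]$ (supplied by Lemma \ref{lem:torsion}), writes $t\partial b = \partial w$ for the unique $w \in C_d(T;\Z)$, checks by hand that $tb - w$ generates $Z_d(T\cup b;\Z)$, and reads off $t = \pm t_b$ by pairing with $b$; the short exact sequence is then stated as a direct consequence. You instead let the long exact sequence of the pair $(T\cup b, T)$ do all of the work at once: the relative chain group is $\Z\langle b\rangle$, the map $H_d(T\cup b;\Z)\to H_d(T\cup b, T;\Z)$ is ``take the coefficient of $b$,'' hence multiplication by $t_b$ on the generator $c$, and exactness simultaneously identifies the order of $[\partial b] = \partial_*[b]$ as $|t_b|$ and produces the short exact sequence. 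The two arguments are computing the same thing --- your identification of $\operatorname{im}\alpha = t_b\Z$ is the exact-sequence shadow of the paper's explicit generator $tb-w$ --- but your version buys a cleaner derivation of the short exact sequence (which the paper leaves implicit) and avoids the ``straightforward to check'' verification that $tb-w$ generates, at the cost of having to justify that $H_d(T\cup b;\Z)\cong\Z$ separately (which you do correctly via Lemma \ref{lem:add_or_remove} and freeness of $Z_d$ of a $d$-complex). Both proofs use the essentiality of $b$ and the spanning-tree conditions $H_d(T;\Z)=0$ in the same places.
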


\begin{proof}  By Lemma \ref{lem:torsion}, $[\partial b]$ is a torsion class.
Let $t$ be its order.

By slight abuse of notation, we let $\partial b$ denote  the cycle representing
 $[\partial b]$.
Then $t\partial b$ is also a  cycle, which  is also the boundary of
a unique integral $d$-chain $w \in C_d(T\cup b;\Z)$. It is straightforward
to check that $tb - w$ is a generator of $H_d(T\cup b;\Z) = Z_d(T\cup b;\Z)$.
Then $\langle tb-w,b\rangle = t$. It follows that $t = \pm t_b$. The short exact sequence
is a direct consequence.
\end{proof}

For a finite CW complex $Y$ of dimension $d$, 
let $\theta_Y$ denote the order of the torsion subgroup of
$H_{d-1}(Y;\Bbb Z)$.

\begin{cor} \label{cor:key} For $T, U, b_i,$ and $b_j$ as above,
\[
  \theta_T^2\langle\bar T(b_i),b_j\rangle = \theta_U^2\langle b_i,\bar U(b_j)\rangle.
\]
\end{cor}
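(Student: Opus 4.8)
The plan is to evaluate both sides explicitly and to observe that each equals the manifestly symmetric quantity $t_{b_i}t_{b_j}\,\theta_{\widetilde T}^2$, where $\widetilde T = T\cup b_i = U\cup b_j$ and $\theta_{\widetilde T}$ is the order of the torsion subgroup of $H_{d-1}(\widetilde T;\Z)$. Since the right-hand side of the asserted identity is obtained from the left by interchanging the roles of $(T,b_i)$ and $(U,b_j)$, it suffices to show that the common value of each side is invariant under this interchange.

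First I would record the two inner products. By Lemma~\ref{prop:ij-proportion} applied to $(T,b_i)$ we have $\langle \bar T(b_i),b_j\rangle = t_{b_j}/t_{b_i}$, where $t_{b_i}=\langle c,b_i\rangle$, $t_{b_j}=\langle c,b_j\rangle$, and $c$ generates $H_d(\widetilde T;\Z)=H_d(T\cup b_i;\Z)$. Applying the same lemma to $(U,b_j)$ — legitimate because $b_j\notin U_d$, $b_i$ is essential in $\widetilde T$, and $U\cup b_j=\widetilde T$ — together with the symmetry of the inner product gives $\langle b_i,\bar U(b_j)\rangle = \langle \bar U(b_j),b_i\rangle = t_{b_i}/t_{b_j}$. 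The point requiring a moment's care is that $\bar U(b_j)$ is built from a generator $c'$ of the \emph{same} group $H_d(\widetilde T;\Z)$, so $c'=\pm c$; hence the $t$-values computed via $c'$ agree with those via $c$ up to a single common sign, and the ratio $t_{b_i}/t_{b_j}$ is unaffected.

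Next I would compute the torsion orders using the short exact sequence established in the lemma preceding this corollary. Applied to $(T,b_i)$ it reads $0\to \Z/t_{b_i}\Z \to H_{d-1}(T;\Z)\to H_{d-1}(\widetilde T;\Z)\to 0$, and applied to $(U,b_j)$ it reads $0\to \Z/t_{b_j}\Z \to H_{d-1}(U;\Z)\to H_{d-1}(\widetilde T;\Z)\to 0$; both have the common quotient $H_{d-1}(\widetilde T;\Z)$. Because the kernel is finite and, by Lemma~\ref{lem:add_or_remove}, the outer groups share the same rank, each sequence restricts to a short exact sequence of torsion subgroups, whence $\theta_T=|t_{b_i}|\,\theta_{\widetilde T}$ and $\theta_U=|t_{b_j}|\,\theta_{\widetilde T}$. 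Squaring and substituting the inner products from the previous step yields
\[
\theta_T^2\langle \bar T(b_i),b_j\rangle = t_{b_i}^2\theta_{\widetilde T}^2\cdot\tfrac{t_{b_j}}{t_{b_i}} = t_{b_i}t_{b_j}\theta_{\widetilde T}^2 = t_{b_j}^2\theta_{\widetilde T}^2\cdot\tfrac{t_{b_i}}{t_{b_j}} = \theta_U^2\langle b_i,\bar U(b_j)\rangle ,
\]
which is the claim. Alternatively, one can shortcut the final comparison using the corollary $\langle \bar T(b_i),b_j\rangle\langle b_i,\bar U(b_j)\rangle=1$ just above: the desired identity is then equivalent to $\theta_T/\theta_U=|t_{b_i}/t_{b_j}|$, which is immediate from the two multiplicativity relations.

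The only genuine obstacle is the torsion bookkeeping: verifying that a short exact sequence $0\to A\to B\to C\to 0$ with $A$ finite and $\operatorname{rank}B=\operatorname{rank}C$ induces a short exact sequence $0\to A\to \operatorname{tors}B\to \operatorname{tors}C\to 0$, so that $|\operatorname{tors}B|=|A|\cdot|\operatorname{tors}C|$. Surjectivity onto $\operatorname{tors}C$ holds because any lift to $B$ of a torsion element of $C$ is itself torsion (a suitable multiple lands in the finite group $A$), and the kernel of $\operatorname{tors}B\to\operatorname{tors}C$ is exactly $A$. Once this is in place, everything else is direct substitution.
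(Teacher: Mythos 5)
Your proposal is correct and follows essentially the same route as the paper: both compute $\langle\bar T(b_i),b_j\rangle=t_{b_j}/t_{b_i}$ via Lemma~\ref{prop:ij-proportion}, extract $\theta_T=|t_{b_i}|\theta_Y$ and $\theta_U=|t_{b_j}|\theta_Y$ from the short exact sequence with common quotient $Y=T\cup b_i=U\cup b_j$, and identify both sides with $t_{b_i}t_{b_j}\theta_Y^2$. Your extra care about the sign of the generator and the passage to torsion subgroups only fills in details the paper leaves implicit.
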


\begin{proof} Set $t_i := t_{b_i}$ and let $Y = T \cup b_i = U \cup b_j$.
Then the exact sequence
\[0 \to \Z/t_i\Z \to H_{d-1}(T;\Z) \to H_{d-1}(Y;\Z)\to 0\] gives
$|t_i|\theta_Y = \theta_T$ and similarly $|t_j|\theta_Y = \theta_U$. Consequently,
\[
\theta_T^2\langle\bar T(b_i),b_j\rangle = \theta^2_{Y} t_it_j =
 \theta_U^2\langle b_i,\bar U(b_j)\rangle \, . \qedhere
\]

\end{proof}

\section{Proof of Theorem \ref{bigthm:network} and Addendum \ref{add:Kirchhoff_formula}
\label{sec:proof-thm-network}}

The proof will proceed  along the lines given in
\cite{NS} in the classical setting.
Given a spanning tree $T$,
let $\{ b_1,\dots,b_k \}$ elements of $X_d \setminus T_d$.

\begin{lem} \label{lem:basis} The collection $\bar T(b_1),\dots, \bar T(b_k)$ is a basis for
$Z_d(X;\R)$.
\end{lem}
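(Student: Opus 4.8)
The plan is to establish linear independence of the family $\bar T(b_1),\dots,\bar T(b_k)$ and then match their number against $\dim Z_d(X;\R)$. For the dimension count, observe that since $X$ has dimension $d$ there are no $(d{+}1)$-cells, so $B_d(X;\R) = 0$ and hence $Z_d(X;\R) = H_d(X;\R)$, a space of dimension $\beta_d(X)$. By the earlier result that any spanning tree is obtained by removing exactly $\beta_d(X)$ essential $d$-cells, the complement $X_d \setminus T_d = \{b_1,\dots,b_k\}$ has $k = \beta_d(X)$ elements. Thus $k = \dim Z_d(X;\R)$, and it suffices to prove that the $\bar T(b_i)$ are linearly independent.

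The key computation is that the pairing array $\langle \bar T(b_i),b_j\rangle$, for $i,j \in \{1,\dots,k\}$, is the identity matrix. Recall $\bar T(b_i) = c_i/t_{b_i}$, where $c_i$ generates $Z_d(T\cup b_i;\Z)$ and $t_{b_i} = \langle c_i, b_i\rangle \neq 0$. Since $c_i$ is a cellular chain supported on the subcomplex $T\cup b_i$, the only $d$-cells at which $c_i$ has nonzero coefficient lie in $T_d \cup \{b_i\}$. Hence for $j \neq i$ we have $b_j \in X_d\setminus T_d$ with $b_j \neq b_i$, so $b_j \notin T_d\cup\{b_i\}$ and $\langle c_i, b_j\rangle = 0$, while $\langle c_i,b_i\rangle = t_{b_i}$. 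Dividing by $t_{b_i}$ gives $\langle \bar T(b_i),b_j\rangle = \delta_{ij}$. Consequently, if $\sum_i \lambda_i \bar T(b_i) = 0$, then pairing with $b_j$ yields $\lambda_j = 0$ for every $j$; the family is therefore linearly independent, and by the count above it is a basis for $Z_d(X;\R)$.

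I expect no serious obstacle here: the only point requiring care is the support claim for $\bar T(b_i)$, which rests solely on $c_i$ being an integral cycle supported on $T\cup b_i$, together with the fact that the removed cells are essential so that $t_{b_i}\neq 0$. An equivalent route, which bypasses the dimension count, proves spanning directly: given $z \in Z_d(X;\R)$, set $\lambda_i = \langle z, b_i\rangle$; then $z - \sum_i \lambda_i \bar T(b_i)$ is a $d$-cycle whose coefficient at every cell of $X_d\setminus T_d$ vanishes, so it lies in $Z_d(T;\R)$. Since $H_d(T;\Z) = 0$ and $T$ has dimension $d$, we have $Z_d(T;\R) = H_d(T;\R) = 0$, forcing this difference to be zero and exhibiting $z$ in the span. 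Either formulation completes the proof.
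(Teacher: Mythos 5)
Your proof is correct, and the central computation is the same one the paper relies on: the identity $\langle \bar T(b_i),b_j\rangle = \delta_{ij}$, which holds because the generator $c_i$ of $Z_d(T\cup b_i;\Z)$ is supported on $T_d\cup\{b_i\}$. Where you diverge is in how completeness is established. The paper packages the whole argument topologically: it passes to the quotient $q\colon X \to X/T$, notes that $q_\ast\colon H_d(X;\R)\to H_d(X/T;\R)$ is an isomorphism onto the vector space with basis $b_1,\dots,b_k$ (this uses the defining properties of a spanning tree via the exact sequence of the pair), and observes that $q_\ast\circ\bar T$ fixes each $b_i$ --- which is exactly your $\delta_{ij}$ identity in disguise, since $q_\ast$ is coordinate projection onto the non-tree cells. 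You instead stay entirely in linear algebra: independence from the dual pairing, plus either the dimension count $k=\beta_d(X)=\dim Z_d(X;\R)$ (legitimately citing the earlier lemma on the number of removed cells) or, better, the direct spanning argument that a $d$-cycle vanishing on all of $X_d\setminus T_d$ lies in $Z_d(T;\R)=H_d(T;\R)=0$. Your second route is essentially a hands-on proof that $q_\ast$ is injective on $Z_d(X;\R)$, so nothing is lost; what you gain is self-containedness (no appeal to the homology of the quotient complex), at the cost of invoking one extra auxiliary fact in the dimension-count version. Both arguments are sound.
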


 \begin{proof} Recall that $Z_d(X;\R) = H_d(X;\R)$.
 Let $q\: X\to X/T$ be the quotient map. Then the homomorphism $q_\ast\:
 H_d(X;\R) \to H_d(X/T;\R)$ is an isomorphism, and $H_d(X/T;\R)$ is the vector space
 with basis $b_1,\dots,b_k$.
 It's straightforward to check that $q_\ast\circ \bar T\: C_d(X;\R) \to H_d(X/T;\R)$
 maps a $d$-cell
 $b$ to itself when $b \in X_d\setminus T_d$ and is zero otherwise.
 \end{proof}

 \begin{cor}  For any $z\in Z_d(X;\R)$, we have $\bar T(z) = z$.
 \end{cor}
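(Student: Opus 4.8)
The plan is to leverage the immediately preceding Lemma~\ref{lem:basis} together with the injectivity of the homology map $q_\ast$. Since $q_\ast\colon H_d(X;\R)=Z_d(X;\R)\to H_d(X/T;\R)$ is an isomorphism, it suffices to verify $q_\ast(\bar T(z))=q_\ast(z)$ for every $z\in Z_d(X;\R)$; injectivity of $q_\ast$ then yields $\bar T(z)=z$.

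First I would record the two sides separately. On one hand, the proof of Lemma~\ref{lem:basis} shows that $q_\ast\circ\bar T\colon C_d(X;\R)\to H_d(X/T;\R)$ sends a $d$-cell $b$ to $b$ when $b\in X_d\setminus T_d$ and to $0$ otherwise. Expanding $z=\sum_{b\in X_d}\langle z,b\rangle\, b$ in the standard basis and applying this gives
\[
q_\ast(\bar T(z))=\sum_{b\in X_d\setminus T_d}\langle z,b\rangle\, b.
\]
On the other hand, the cellular quotient map $q\colon X\to X/T$ collapses the subcomplex $T$, so at the chain level it annihilates the cells of $T$ and fixes the remaining $d$-cells. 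Since $X^{(d-1)}\subset T$, the complex $X/T$ has trivial $(d-1)$-skeleton, hence $C_{d-1}(X/T;\R)=0$ and therefore $Z_d(X/T;\R)=C_d(X/T;\R)=H_d(X/T;\R)$. Consequently $q_\ast(z)$ is represented by the chain obtained from $z$ by deleting its components along $T$, that is,
\[
q_\ast(z)=\sum_{b\in X_d\setminus T_d}\langle z,b\rangle\, b.
\]
Comparing the two displays gives $q_\ast(\bar T(z))=q_\ast(z)$, and injectivity finishes the argument.

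The only point requiring care is the identification of $q_\ast(z)$ with the truncated chain: because $X/T$ has no cells below dimension $d$ apart from the basepoint, every $d$-chain there is automatically a cycle and coincides with its own homology class, so the image of the cycle $z$ under $q_\ast$ is literally its chain-level image. I do not anticipate a genuine obstacle; the corollary essentially restates that $\bar T$ is the identity on its image, and the quotient-map description from Lemma~\ref{lem:basis} makes this transparent. An alternative, more hands-on route would be to show directly that $\bar T$ is idempotent, namely $\bar T(\bar T(b_i))=\bar T(b_i)$ for each $b_i\in X_d\setminus T_d$, and then invoke Lemma~\ref{lem:basis} to conclude that $\bar T$, being idempotent with image all of $Z_d(X;\R)$, restricts to the identity there.
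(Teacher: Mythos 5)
Your argument is correct, and it takes a mildly different route from the paper's. The paper expands $z$ in the basis $\{\bar T(b_i)\}$ supplied by Lemma~\ref{lem:basis} and then applies the idempotency relation $\bar T(\bar T(b_i))=\bar T(b_i)$ to those basis vectors --- which is exactly the ``alternative, more hands-on route'' you sketch in your final sentence. Your primary argument instead stays in the standard cell basis and transfers the whole identity across the isomorphism $q_\ast\colon Z_d(X;\R)\to H_d(X/T;\R)$, checking that $\bar T(z)$ and $z$ have the same image. Both of the computations you need are sound: the formula for $q_\ast\circ\bar T$ on cells is recorded in the proof of Lemma~\ref{lem:basis}, and your identification of $q_\ast(z)$ with the chain-level truncation is legitimate precisely because $X^{(d-1)}\subset T$ forces $C_{d-1}(X/T;\R)$ to vanish (reduced), hence $H_d(X/T;\R)=C_d(X/T;\R)$. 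What your route buys is that you never have to verify $\bar T(\bar T(b_i))=\bar T(b_i)$ directly, a step the paper leaves implicit and which requires noting that the cycle $c/t_{b_i}$ is supported on $T\cup b_i$, that $\bar T$ annihilates the cells of $T$, and that the coefficient of $b_i$ in $c/t_{b_i}$ is $1$. What the paper's route buys is brevity: two lines once Lemma~\ref{lem:basis} is available. Either proof is acceptable.
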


 \begin{proof} Use the  Lemma \ref{lem:basis} to write $z = \sum_i s_i\bar T(b_i)$. Then
 \[\bar T(z)
 = \sum_i s_i \bar T(\bar T(b_i)) = \sum_i s_i\bar T(b_i) = z\, . \qedhere
 \]
 \end{proof}

 \begin{lem} \label{lem:TU} For distinct $d$-cells $b_i,b_j \in X$, let $\cal T_{ij}$ be the set of spanning
 trees such that $\langle \bar T(b_i),b_j \rangle \ne 0$.  Then
 \[
 \sum_{T\in \cal T_{ij}}w_T \langle \bar T(b_i), b_j\rangle_R
 =  \sum_{U\in \cal T_{ji}}w_U \langle \bar b_i, U(b_j)\rangle_R \, .
 \]
 \end{lem}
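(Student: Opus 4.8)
The plan is to prove the identity not by manipulating the two sums analytically, but by exhibiting an explicit bijection $\cal T_{ij} \to \cal T_{ji}$ under which the individual summands agree term by term. Once such a weight-preserving correspondence is in place, the two sums are equal summand-for-summand, which is stronger than equality of the totals.

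First I would extract the combinatorial meaning of membership in $\cal T_{ij}$. If $T \in \cal T_{ij}$, then $\langle \bar T(b_i),b_j\rangle \ne 0$ forces $b_i \notin T_d$ (else $\bar T(b_i)=0$), and since $\bar T(b_i)$ is a $d$-cycle supported on $T \cup b_i$ with $b_j \ne b_i$, it forces $b_j \in T_d$. The same cycle witnesses that both $b_i$ and $b_j$ are essential in $Y := T \cup b_i$. I would then define the candidate map $T \mapsto U := (T\cup b_i)\setminus b_j = Y \setminus b_j$. Because $b_j$ is essential in $Y = T \cup b_i$, the earlier add-then-remove lemma shows $U$ is a spanning tree, and the corollary asserting $\langle \bar T(b_i),b_j\rangle\langle b_i,\bar U(b_j)\rangle = 1$ shows $\langle b_i,\bar U(b_j)\rangle \ne 0$, so $U \in \cal T_{ji}$. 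The assignment $U \mapsto (U\cup b_j)\setminus b_i$ is an inverse: since $U \cup b_j = Y = T\cup b_i$, applying it returns $T$. Hence $T \mapsto U$ is a bijection $\cal T_{ij} \to \cal T_{ji}$.

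Next I would match the summands across this bijection. Unwinding the modified inner product via $\langle x,y\rangle_R = \langle Rx,y\rangle$ and the diagonal (self-adjoint) form of $R$ gives $\langle \bar T(b_i),b_j\rangle_R = r_{b_j}\langle \bar T(b_i),b_j\rangle$ and $\langle b_i,\bar U(b_j)\rangle_R = r_{b_i}\langle b_i,\bar U(b_j)\rangle$. Writing $w_T = \theta_T^2\prod_{b\in T_d} r_b^{-1}$ and $w_U = \theta_U^2\prod_{b\in U_d} r_b^{-1}$, Corollary \ref{cor:key} lets me replace $\theta_T^2\langle \bar T(b_i),b_j\rangle$ by $\theta_U^2\langle b_i,\bar U(b_j)\rangle$. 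Finally, the cell-set identity $U_d = (T_d\setminus\{b_j\})\cup\{b_i\}$ (valid since $b_i\notin T_d$, $b_j\in T_d$) gives $\prod_{b\in U_d} r_b = r_{b_i} r_{b_j}^{-1}\prod_{b\in T_d} r_b$, and tracking these factors shows precisely that $w_T\, r_{b_j}\langle \bar T(b_i),b_j\rangle = w_U\, r_{b_i}\langle b_i,\bar U(b_j)\rangle$; that is, the $T$-summand on the left equals the $U$-summand on the right.

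The step I expect to be the main obstacle is the bijection itself rather than the weight computation: one must carefully verify that $b_i\notin T$ and $b_j\in T$, that $b_j$ is essential in $T\cup b_i$ so that the add-then-remove lemma legitimately applies, and that the proposed inverse genuinely lands back in $\cal T_{ij}$ (so that no spanning tree is lost or double-counted). Once this combinatorial matching is secured, the passage from $w_T\langle\bar T(b_i),b_j\rangle_R$ to $w_U\langle b_i,\bar U(b_j)\rangle_R$ is a short manipulation resting entirely on Corollary \ref{cor:key} together with the resistance-product bookkeeping.
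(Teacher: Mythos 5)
Your proof is correct and follows essentially the same route as the paper: match summands under the bijection $T \mapsto (T\cup b_i)\setminus b_j$ between $\cal T_{ij}$ and $\cal T_{ji}$, using Corollary \ref{cor:key} together with the resistance bookkeeping $r_{b_j}w_T\cdot\theta_T^{-2} = r_{b_i}w_U\cdot\theta_U^{-2}$. Your write-up is in fact more careful than the paper's terse proof, since you explicitly verify that $b_i\notin T_d$, $b_j\in T_d$, and that the add-then-remove lemma applies.
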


\begin{proof}  From the definition of of the weights, have
$r_jw_T = r_i w_U$. Note that $\langle \bar T(b_i),b_j\rangle_R = r_j$. Using
 Corollary \ref{cor:key}, we infer
\[
\theta_T^2\langle\bar T(b_i),b_j\rangle_R = \theta_U^2\langle b_i,\bar U(b_j)\rangle_R \, .
\]
Now sum up over all $T\in \cal T_{ij}$.
\end{proof}

\begin{proof}[Proof of Theorem \ref{bigthm:network}]
Consider the operator $F:= \sum_T w_T\bar T$, where the sum is over
all spanning trees of $X$.
For any pair of $d$-cells $b_i$ and $b_j$ of $X$ we have
\begin{align*}
\langle \sum_T w_T\bar T(b_i),b_j\rangle_R
& = \sum_{T\in \cal T_{ij}}   w_T\langle\bar T(b_i),b_j\rangle_R \\
& =\sum_{U\in \cal T_{ji}}w_U \langle \bar b_i, \bar U(b_j)\rangle_R \qquad
\text{ by Lemma \ref{lem:TU} } , \\
& = \langle b_i ,\sum_U w_U \bar U(b_j)\rangle_R \\
& = \langle b_i ,\sum_T w_T \bar T(b_j)\rangle_R
\end{align*}
Hence $F$ is self-adjoint in the modified inner product.

If $z$ is a cycle, then $F(z) = (\sum_T w_T)z =: \Delta z$. Consequently,
$(1/\Delta) F$ restricts to the identity on $Z_d(X;\R)$. As $(1/\Delta) F$
is self-adjoint, it is the orthogonal projection in the modified inner product.
\end{proof}

\begin{proof}[Proof of Addendum \ref{add:Kirchhoff_formula}]  Let $z$ be the orthogonal projection of $R^{-1}\mathbf V$ in the modified inner product. Then $R^{-1}\mathbf V - z \in B^{d}_R(X;\R)$, i.e.,
\[
0 = \langle R^{-1}\mathbf V - z, z'\rangle_R = \langle \mathbf V - Rz, z'\rangle
\]
for all $z'\in Z_{d}(X;\R)$. Hence, $\mathbf V - Rz \in B^d(X;\R)$.
The uniqueness of $z$ is a consequence of the fact that $B^{d}(X;\R)$ is the orthogonal
complement to $Z_d(X;\R)$ in the standard inner product.

The proof of the last part is given by direct calculation using the self-adjointness
of the operator $\sum_T w_T\bar T$:
\begin{align*}
\langle z,b\rangle &= \tfrac{1}{r_b} \langle z,b\rangle_R \, , \\
& = \tfrac{1}{r_b} \langle \tfrac{1}{\Delta}{\textstyle \sum}_T w_T R^{-1}\mathbf V,b\rangle_R \, ,\\
&= \tfrac{1}{\Delta}\sum_T \tfrac{w_T}{r_b} \langle R^{-1}\mathbf V,\bar T(b)\rangle_R \, , \\
&= \tfrac{1}{\Delta}\sum_T \tfrac{w_T}{r_b} \langle \mathbf V,\bar T(b)\rangle\, . \qedhere
\end{align*}
\end{proof}

\section{A weak form of Theorem  \ref{thm:higher-matrix-tree} \label{sec:weak-matrix-tree}}
The goal of this section is to show how Theorem \ref{bigthm:network}
implies Theorem \ref{thm:higher-matrix-tree} up to the identification of the
prefactor $\gamma$. The prefactor will be computed in \S\ref{app:extras}, where in addition
we prove an enhanced version of 
Theorem \ref{thm:higher-matrix-tree}.

Recall the given function $r\: X_d \to \R_+$ of \S\ref{sec:intro}.
It is convenient to set
\[
W := \ln r\: X_d\to \R \, .
\]
 Then $r_b = e^{W_b}$,
and we may also write $R = e^W\: C_d(X;\R) \to C_d(X;\R)$.
Conversely, given any function $W\: X_d\to \R$, we set
$r := e^W\: X_d \to \R_+$. It is convenient to think of
$W$ as lying in $C_d(X;\R)$ by representing it as $\sum_{b\in X_d} W_bb$.

Recall that to each spanning tree $T$ we associated the weight
\[
   w_T = \theta_T^2  \prod_{b\in T_d}r_b^{-1},
\]
where $\theta_T$ is the order of the torsion subgroup of $H_{d-1}(T;\Z)$.

\begin{rem} Let $M$ be a smooth manifold and let
 $V$ be a finite dimensional real vector space. Suppose
$f\: M \to  V$ is a smooth map. Then the directional derivative
defines a $V$-valued, smooth, differential 1-form  $df \in \Omega^1(M;V)$.
In the special case when $M = U$ is a finite dimensional real vector space, then $\Omega^1(M;V)$ can be identified
with the space of smooth maps $U \to \hom(U,V)$.
\end{rem}

Consider the linear operator
\[
\partial\partial^\ast_R = \partial e^{-W} \partial^*: C_{d-1}(X;\R) \to C_{d-1}(X;\R)\, .
\]
Since the image of $\partial\partial^\ast_R$ is contained in $B_{d-1}(X;\R)$,
restriction of this operator to $B_{d-1}(X;\R)$  gives an isomorphism
\begin{equation} \label{calL(W)}
\mathcal{L}(W) \: B_{d-1}(X;\R) @> \cong >> B_{d-1}(X;\R)\, .
\end{equation}
For $R = e^W$, $\cal L(W)$ is the operator $\cal L^R$ defined in \S\ref{sec:intro}.

We can regard $W \mapsto \cal L(W)$ as defining a smooth map
\begin{equation} \label{calL}
\cal L\: C_d(X;\R) \to \End(B_{d-1}(X;\R))\, ,
\end{equation}
which is a family of linear operators parametrized by $C_d(X;\R)$.
To avoid notational clutter, when $W$ is understood, we will often write
$\mathcal L(W)$ without referring to its argument. Therefore,
$\cal L$ can refer to either \eqref{calL} or \eqref{calL(W)}.

\begin{prop} \label{prop:derived} Theorem \ref{bigthm:network} implies
the  identity
\[
  d \ln \det \mathcal L =  d \ln \sum_T w_T  \, .
\]
\end{prop}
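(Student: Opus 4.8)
The plan is to prove the identity of $V$-valued $1$-forms on $C_d(X;\R)$ directly, by computing the directional derivative of each side along an arbitrary tangent vector and checking they agree. Since both sides are functions of $W$, it suffices to fix a tangent direction—which I take to be a single $d$-cell $b_j$, i.e. I differentiate with respect to $W_{b_j}$—and show the two partial derivatives coincide. Because the $d$-cells span the tangent space $C_d(X;\R)$, equality in every such direction gives equality of the full $1$-forms.

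First I would compute the right-hand side. Write $\Delta = \sum_T w_T$, so $\ln\sum_T w_T = \ln\Delta$. Recall $w_T = \theta_T^2\prod_{b\in T_d} r_b^{-1} = \theta_T^2 \exp(-\sum_{b\in T_d} W_b)$. Hence $\partial w_T/\partial W_{b_j} = -w_T$ if $b_j \in T_d$ and $0$ otherwise, giving
\[
\frac{\partial}{\partial W_{b_j}}\ln\Delta = -\frac{1}{\Delta}\sum_{T \ni b_j} w_T\, .
\]
The complementary sum $\frac{1}{\Delta}\sum_{T\not\ni b_j} w_T$ equals $1$ minus the above, so I expect the right-hand derivative to be expressible as $-1 + \frac{1}{\Delta}\sum_{T\not\ni b_j} w_T$, and the term $\frac{1}{\Delta}\sum_{T\not\ni b_j} w_T$ is precisely the $(b_j,b_j)$ matrix entry of the orthogonal projection $P = \frac{1}{\Delta}\sum_T w_T\bar T$ from Theorem~\ref{bigthm:network}, since by Lemma~\ref{prop:ij-proportion} one has $\langle \bar T(b_j),b_j\rangle = 1$ whenever $b_j\notin T_d$ (and $\bar T(b_j)=0$ when $b_j\in T_d$). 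This is the step where Theorem~\ref{bigthm:network} enters: it identifies the combinatorial tree sum with a coordinate of the projection operator.

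Next I would compute the left-hand side using Jacobi's formula, $d\ln\det\mathcal L = \tr(\mathcal L^{-1}\,d\mathcal L)$. Since $\mathcal L = \partial e^{-W}\partial^*$ restricted to $B_{d-1}$, and only the $b_j$-entry of the diagonal operator $e^{-W}$ varies, $\partial\mathcal L/\partial W_{b_j} = -\partial\, e^{-W} E_{jj}\,\partial^*$ where $E_{jj}$ projects onto the $b_j$-coordinate. I expect to rewrite $\tr(\mathcal L^{-1}\partial\mathcal L/\partial W_{b_j})$, after moving factors under the trace, as $-\langle \partial^*\mathcal L^{-1}\partial\, e^{-W} b_j, b_j\rangle$ up to the correct inner-product normalization, and to recognize the operator $e^{-W}\partial^*\mathcal L^{-1}\partial$ (or its restriction to $B^d_R$) as exactly the orthogonal projection onto $B^d_R(X;\R)$, the $R$-orthogonal complement of $Z_d(X;\R)$. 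That projection is $I - P$, so the left-hand derivative should come out to $-\langle (I-P)b_j, b_j\rangle_{\text{(suitable product)}} = -1 + \langle P b_j, b_j\rangle$, matching the right-hand side.

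The main obstacle I anticipate is bookkeeping the two inner products correctly: $\mathcal L$ is defined via $\partial^*_R = e^{-W}\partial^*$ (adjoint in the modified product), while the projection $P$ from Theorem~\ref{bigthm:network} is $R$-orthogonal, and the trace in Jacobi's formula is basis-dependent. Keeping straight which adjoint, which inner product, and which normalization ($r_{b_j}$ factors) is used at each step—so that the diagonal entry on the left genuinely equals the diagonal entry of $P$ on the right—is the delicate part. Once the operator $I-P = e^{-W}\partial^*\mathcal L^{-1}\partial$ is correctly identified as the $R$-orthogonal projection onto $B^d_R$, the two computations will reduce to the same expression $-1 + \langle P b_j,b_j\rangle$ and the proposition follows.
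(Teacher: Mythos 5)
Your proposal is correct and follows essentially the same route as the paper: Jacobi's formula $d\ln\det\mathcal L=\tr(\mathcal L^{-1}d\mathcal L)$, the cyclic property of the trace, the identification of $e^{-W}\partial^*\mathcal L^{-1}\partial$ with the $R$-orthogonal projection onto $B^d_R(X;\R)=$ the complement of $Z_d(X;\R)$, and then Theorem~\ref{bigthm:network} together with $\langle\bar T(b),b\rangle\in\{0,1\}$ to evaluate the diagonal entries as $\tfrac{1}{\Delta}\sum_{T\ni b}w_T$. The only difference is presentational (you differentiate coordinate-by-coordinate in each $W_{b_j}$ while the paper carries the full $1$-form $dW$ and sums over the diagonal), and the inner-product bookkeeping you worry about resolves cleanly since $\partial^*\mathcal L^{-1}\partial e^{-W}=e^{W}Pe^{-W}$ has the same diagonal entries as $P$.
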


\begin{rem} In keeping with our notational ambiguity, the left side of the display in
Proposition \ref{prop:derived}
is to be interpreted as the value at $W$ of $d \ln \det \mathcal L \in \Omega^1(C_d(X;\R);\R)$.

Proposition \ref{prop:derived} is equivalent to the statement
\[
\det \mathcal L  = \gamma\sum_T w_T\, .
\]
for a suitable positive constant $\gamma$, as yet to be determined.
This gives Theorem \ref{thm:higher-matrix-tree}
modulo the determination of the prefactor $\gamma$.
\end{rem}

\begin{proof}[Proof of Proposition \ref{prop:derived}] We take the differential
of the natural logarithm of $\det\cal L$:
\begin{equation} \label{eq:trace-reduction}
\begin{alignedat}{1}
d \ln \det \mathcal{L} &= d \tr \ln \mathcal{L} \\
 &= \tr d(\ln \mathcal{L}) \\
 &= \tr (\mathcal{L}^{-1} d \mathcal{L}),
\end{alignedat}
\end{equation}
where $d\mathcal{L} = \partial d e^{-W} \partial^* = -\partial dW e^{-W} \partial^*$.

The cyclic property of the trace implies
\begin{equation} \label{eq:further-trace-reduction}
\begin{alignedat}{1}
   \tr(\mathcal{L}^{-1} d \mathcal{L}) &= -\tr (\partial dW e^{-W} \partial^* \mathcal{L}^{-1}).
\end{alignedat}
\end{equation}
If we set $A:= e^{-W} \partial^* \mathcal{L}^{-1}\: B_{d-1}(X;\R) \to B^d_R(X;\R)$, then
$\tr(\mathcal{L}^{-1} d \mathcal{L})  = -\tr (\partial dW A) = -\tr(dW A \partial)$.
Consequently,
\begin{equation} \label{eq:trace-sum-expression}
\begin{alignedat}{1}
  d \tr \ln \mathcal{L} &= -\tr (dW A \partial)  \\
  &=  -\sum_{b\in X_{d}}  \langle b| dW A \partial|b \rangle \\
  &= -\sum_{b\in X_{d}}  \langle b | dW A|\partial b \rangle  \\
  &= -\sum_{b\in X_{d}}  dW_b \langle b| A|\partial b \rangle  \, ,
\end{alignedat}
\end{equation}
where $dW_b$ denotes the $b$-coordinate function of $dW$, i.e.,
$dW_b(x) = dW(x)(b) = W(b)$, and
$\langle i|H|j\rangle$ stands for the inner product $\langle i,H(j)\rangle$.

By definition, $A$ is a left inverse to $\partial \: B^d_R(X;\R) \to B_{d-1}(X;\R)$, so the expression $\langle b| A | \partial b \rangle$ is
the same as $\langle b, Pb\rangle$, where $P\: C_d(X;\R) \to B_R^d(X;\R)$ is the
orthogonal projection in the modified inner product $\langle\phantom{a},\phantom{a}\rangle_R$.  By Theorem \ref{bigthm:network}, we have
\begin{equation} \label{eq:B-projection}
P = I - \tfrac{1}{\Delta}\sum_{T}w_T \bar T\, ,
\end{equation}
where $I$ is the identity operator. By inserting this expression into
$\langle b, Pb\rangle$
and doing some rewriting, we obtain
\begin{equation} \label{eq:triple-bracket}
  \langle b | A | \partial b \rangle = \frac{1}{\Delta} \sum_{T,b\in T_d} w_T,
\end{equation}
where $\Delta = \sum_T w_T$ and the displayed sum is over all spanning trees
$T$ for which $b$ lies in $T$. This allows us to rewrite the expression appearing in the last
line of  Eq.~\eqref{eq:trace-sum-expression}
as
\begin{equation} \label{eq:halfway}
\sum_{b \in X_d} dW_b \langle b | A | \partial b \rangle =
\frac1{\Delta} \sum_T \sum_{b \in T_d}w_T  dW_b\, .
\end{equation}
On the other hand, for any spanning tree $T$ we have
\begin{equation} \label{eq:sum-spanning-tree-differential}
d \ln \sum_T w_T = \frac{1}{\Delta} \sum_T dw_T,
\end{equation}
where $d w_T$ is given by
\begin{equation}\label{eq:dwt}
 d w_T = \theta_T^2 \, d \! \prod_{b \in T_d} e^{-W_b} =- \sum_{b \in T_d} dW_b w_T .
\end{equation}
Inserting Eq.~\eqref{eq:dwt} into Eq.~\eqref{eq:sum-spanning-tree-differential} gives
\begin{equation} \label{eq:derived-tree-sum}
d \ln \sum_T w_T = \frac{-1}{\Delta} \sum_T \sum_{b \in T_d} w_T dW_b.
\end{equation}
Assembling equations \eqref{eq:trace-reduction}, \eqref{eq:trace-sum-expression}, \eqref{eq:halfway}, \ref{eq:sum-spanning-tree-differential}
and  \eqref{eq:derived-tree-sum}, we conclude
\[
  d \ln \det \mathcal L = -\frac{1}{\Delta} \sum_T \sum_{b \in T_d} w_T dW_b =  d \ln \sum_T w_T  \, . \qedhere
\]
\end{proof}

\section{The low temperature limit \label{sec:low-temp}}
Here we compute $\det \mathcal{L}$  in the low temperature $\beta \to \infty$ limit
for a certain kind of $W$.
We set
\[
\mathcal{L} = \partial e^{-\beta W} \partial^*\: B_{d-1}(X;\R) \to B_{d-1}(X;\R)
\]
where $W\: X_d \to \R$,
and $\beta \in \R_+$ represents inverse temperature.

 Our freedom in choosing $W$ shows
this determinant will tend
to $\det \mathcal{L}^T$, where $\mathcal L^T$ is $\mathcal L$ restricted to a
spanning tree.
\begin{defn}
Fix a spanning tree $T$ of $X$.
A function $W\: X_d \to \R$ is {\it good} if

\[ W_{\gamma} > \sum_{\alpha \in T_d} W_{\alpha} -
k \min_{\alpha \in T_d} W_{\alpha} \quad \mbox{ for any } \gamma \in X_d \setminus T_d,
\]
where $k$ is the number of $d$-cells of $X$.
\end{defn}

\begin{prop} \label{prop:goodW} For good $W\: X_d \to \R$, we have
\[
\lim_{\beta \to \infty} \frac{\det \mathcal{L}^T}{\det\mathcal L} = 1 \, .
\]
\end{prop}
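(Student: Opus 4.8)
The plan is to expand both determinants as sums over spanning trees, observe that the contribution of the fixed tree $T$ carries \emph{identical} coefficients in $\det\mathcal{L}$ and $\det\mathcal{L}^T$, and then use the good condition to show that every other spanning tree contributes an exponentially smaller term as $\beta\to\infty$. Since the prefactor $\gamma$ of Theorem \ref{thm:higher-matrix-tree} has not yet been identified at this stage, I would not invoke the matrix-tree theorem but instead work directly with the Cauchy--Binet formula, which matches the relevant leading coefficients automatically and so sidesteps $\gamma$ entirely.

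First I would record that $B_{d-1}(T;\R)=B_{d-1}(X;\R)$: both spaces have dimension $|T_d|$ by the spanning-tree count of \S\ref{sec:spanning-trees}, and the former is contained in the latter, so $\mathcal{L}$ and $\mathcal{L}^T$ are operators on the same space and the ratio is meaningful. Next, fix an orthonormal basis of $B_{d-1}(X;\R)$ assembled into a matrix $P$, let $D$ be the matrix of $\partial\colon C_d(X;\R)\to C_{d-1}(X;\R)$ in the cell bases, and set $M=P^{\mathsf T}D$. Writing $\Lambda=e^{-\beta W}$ for the diagonal operator on $C_d(X;\R)$ with entries $e^{-\beta W_b}$, the restricted operator is $\mathcal{L}=M\Lambda M^{\mathsf T}$, and Cauchy--Binet yields
\[
\det\mathcal{L}=\sum_{S}\det(M_S)^2\prod_{b\in S}e^{-\beta W_b},
\]
the sum over $|T_d|$-element subsets $S\subseteq X_d$. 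A term is nonzero exactly when $\{\partial b: b\in S\}$ is a basis of $B_{d-1}(X;\R)$, which by \S\ref{sec:spanning-trees} happens iff $S=T'_d$ for a spanning tree $T'$. Hence
\[
\det\mathcal{L}=\sum_{T'}a_{T'}\,e^{-\beta\sigma_{T'}},\qquad a_{T'}:=\det(M_{T'_d})^2>0,\quad \sigma_{T'}:=\sum_{b\in T'_d}W_b.
\]
Running the identical computation for $\mathcal{L}^T=\partial_T\Lambda_T\partial_T^*$, whose only spanning-tree subset is $T_d$ itself, leaves the single term $\det\mathcal{L}^T=a_T\,e^{-\beta\sigma_T}$; crucially its coefficient is the same $a_T=\det(M_{T_d})^2$, because $M_{T_d}$ is literally the same square submatrix.

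These two expansions give
\[
\frac{\det\mathcal{L}^T}{\det\mathcal{L}}=\frac{1}{\,1+\sum_{T'\neq T}(a_{T'}/a_T)\,e^{-\beta(\sigma_{T'}-\sigma_T)}\,},
\]
so, as there are finitely many spanning trees and each ratio $a_{T'}/a_T$ is a fixed positive number, the limit equals $1$ provided $\sigma_{T'}>\sigma_T$ for every $T'\neq T$; that is, provided $T$ is the \emph{unique} minimum-weight spanning tree for the additive weight $\sigma$.

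I expect this last, purely combinatorial, claim to be the main obstacle. To prove it, for a spanning tree $T'\neq T$ I would pick a cell $\gamma\in T'_d\setminus T_d$ and write
\[
\sigma_{T'}-\sigma_T=\sum_{\gamma'\in T'_d\setminus T_d}W_{\gamma'}-\sum_{\delta\in T_d\setminus T'_d}W_\delta .
\]
Bounding each non-tree term $W_{\gamma'}$ below by the good inequality, and bounding the subtracted tree terms above in terms of $\sum_{\alpha\in T_d}W_\alpha$ and $\min_{\alpha\in T_d}W_\alpha$, one sees that the defining inequality for good $W$ is calibrated, via the factor $k$, precisely so that the excess in $W_\gamma$ outweighs what is removed, forcing $\sigma_{T'}>\sigma_T$. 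Equivalently, good $W$ makes every non-tree cell strictly more expensive than each cell of $T$ in its fundamental circuit, so the greedy (matroid) characterization of the unique minimum-weight basis identifies $T$ as the strict minimizer. Checking that the single constant $k$ secures this for all $T'$ simultaneously is the one genuinely fiddly point; everything else is formal, and the limit then follows immediately from the displayed ratio.
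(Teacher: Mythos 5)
Your argument is correct in outline but takes a genuinely different route from the paper. The paper splits $\partial^* = \partial^*_T + \tilde\partial^*$, writes $\mathcal{L} = \mathcal{L}^T + \delta\mathcal{L}$, and shows that the matrix entries of $(\mathcal{L}^T)^{-1}\delta\mathcal{L}$ tend to zero, using the cofactor formula for the inverse together with the exact evaluation $\det\mathcal{L}^T = (w_T/\theta_T^2)\det(\partial_T\partial_T^*)$ imported from \S\ref{app:extras}. You instead expand \emph{both} determinants by Cauchy--Binet as sums over spanning trees and reduce the proposition to the purely combinatorial claim that $T$ is the \emph{unique} minimizer of $\sigma_{T'}=\sum_{b\in T'_d}W_b$. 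Your route is more self-contained (no forward reference to Eqs.~\eqref{eq:detLA}--\eqref{eq:detLAT}), it makes the mechanism of the limit completely transparent, and the identification of the nonvanishing Cauchy--Binet minors with spanning trees is exactly the content needed; the intermediate steps (that $B_{d-1}(T;\R)=B_{d-1}(X;\R)$, that $\det M_S\neq 0$ iff $X^{(d-1)}\cup S$ is a spanning tree, and that $a_T$ is literally the same coefficient in both expansions) all check out. The cost is that the one step you defer --- that goodness of $W$ forces $\sigma_{T'}>\sigma_T$ for every $T'\neq T$ --- is precisely where the hypothesis must do its work, and you should be aware it is more delicate than your sketch suggests. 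For a single-cell exchange $T'=(T\setminus\delta)\cup\gamma$ one gets only
\[
\sigma_{T'}-\sigma_T \;=\; W_\gamma - W_\delta \;>\; \Bigl(\sum_{\alpha\in T_d}W_\alpha - k\min_{\alpha\in T_d}W_\alpha\Bigr) - \Bigl(\sum_{\alpha\in T_d}W_\alpha - (n-1)\min_{\alpha\in T_d}W_\alpha\Bigr) \;=\; (n-1-k)\min_{\alpha\in T_d}W_\alpha,
\]
where $n=|T_d|\le k$, and this is nonnegative only when $\min_{\alpha\in T_d}W_\alpha\le 0$. This is not a defect peculiar to your approach --- the paper's own final exponent $(n-1)\min_\alpha W_\alpha-\sum_\alpha W_\alpha+\min_\gamma W_\gamma$ has the identical sign sensitivity --- but since your proof pivots entirely on this inequality you must either restrict to (or normalize to) the case $\min_{\alpha\in T_d}W_\alpha\le 0$, or strengthen the goodness hypothesis, before the claimed limit follows.
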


Before commencing with the proof,
recall the boundary of a $d$-cell $\alpha \in X_d$ is given by
\[
  \partial \alpha = \sum_{\substack{j \in X_{d-1} \\ \langle \partial \alpha, j \rangle \neq 0}}
  b_{\alpha j} j
\]
where $b_{\alpha j} := \langle \partial \alpha, j \rangle$
is the incidence number of $\alpha$ and $j$.
With respect to the standard inner product,
the adjoint operator $\partial^*$ on a $(d-1)$-cell $j$ is given by
\[
  \partial^*j = \sum_{\substack{\alpha \in X_d \\ \langle \partial \alpha, j \rangle \neq 0}}
   b^*_{j \alpha} \alpha
\]
where $b^*_{j \alpha} := b_{\alpha j}$.
A straightforward computation of the matrix elements of $\mathcal{L}$
yields
\begin{equation*}
   \mathcal{L}_{ij} = \sum_{\alpha \in X_d}
   e^{-\beta W_{\alpha}} b_{\alpha i} b_{\alpha j}.
\end{equation*}

\begin{proof}[Proof of Proposition \ref{prop:goodW}]
Since $C_d(X;\R)$ is a real vector space with basis spanned by the set
of $d$-cells, $X_d$, 
we have an orthogonal projection $Q\: C_d(X;\R) \to C_d(T;\R)$,.
This allows us to write $\partial^* = \partial^*_T +
\tilde \partial^*$, where $\partial^*_T$ is defined via the commutative
diagram
\[
  \xymatrix{
  B_{d-1}(X; \R)
 \ar@{->}[r]^{\partial^*}
 \ar@{->}[dr]_{\partial_T^*}
 &
 C_d(X;\R)
 \ar@{->}[d]^{Q}
 \\
&
 C_d(T;\R)\, .
}
\]
It also enables us to write
\[
\cal L = \mathcal L^T + \delta \mathcal L\, ,
\]
where $\cal L^T = \partial\partial^\ast_T$.
 Together, these imply
\begin{equation*}
   \mathcal{L}^T_{ij} = \sum_{\alpha \in T_d}
   e^{-\beta W_{\alpha}} b_{\alpha i} b_{\alpha j}\, .
\end{equation*}
Our choice of good $W$ implies
any $e^{-\beta W_{\gamma}}$ appearing in the expansion of
$\delta \mathcal{L}$ must be less than any $e^{-\beta W_{\alpha}}$ appearing in $\mathcal{L}^T$
and conversely. This also means the matrix elements of $\delta \mathcal L$ can be written as
a similiar sum; the only difference is we instead sum over
$\alpha \in X_d \setminus T_d$.

To simplify taking the limit, we compute the quotient of $\det \mathcal L $ by $\det \mathcal L^T$
and  let $\beta \to \infty$. Since $\det \mathcal L^T \neq 0$, we may write
\[
  \frac{\det( \mathcal L^T +  \delta \mathcal L)}{\det \mathcal L^T} =
  \frac{\det( I + (\mathcal{L}^T)^{-1} \delta \mathcal L) \det \mathcal L^T}{\det \mathcal L^T}.
\]
It suffices to prove that $(\mathcal{L}^T)^{-1} \delta \mathcal L$ tends to the
zero operator as $\beta \to \infty$. Equivalently, it is enough to show that 
 the matrix
elements of  $(\mathcal{L}^T)^{-1} \delta \mathcal L$ converge to zero. The
first bound is of $\mathcal{L}^T_{ij}$:
\begin{align*}
 |\mathcal{L}^T_{ij}| &\leq \sum_{\alpha \in T_{d}} e^{-\beta W_{\alpha}} |b_{\alpha i} b_{\alpha j}| \\
  &\leq e^{-\beta \min_{\alpha} W_{\alpha}} \sum_{\alpha} |b_{\alpha i} b _{\alpha j}|  .
\end{align*}
This can be further bounded by defining $B^T = \max_{ij} \sum_{\alpha} |b_{\alpha i} b_{\alpha j}|$. Hence,
we have
\begin{equation} \label{bound_LijT}
 |\mathcal L_{ij}^T| \leq e^{-\beta(\min_{\alpha \in T_d} W_{\alpha})} B^T .
\end{equation}
The standard formula for the inverse of a matrix gives
\begin{equation}
 \left( (\mathcal L^T)^{-1} \right)_{ij} = \frac{\det \bar {\mathcal L}^T_{ij} }{\det \mathcal L^T} \, ,
\end{equation}
where $\bar A_{ij}$ is the $(i,j)$-th cofactor of $A$.
Using the exact expression for the determinant of $\cal L^T$ appearing in
Eq.~\eqref{eq:detLAT} below, Eq.~\eqref{eq:detLA} below\footnote{There is no circularity here;  Eqs.~\eqref{eq:detLA} and \eqref{eq:detLAT} do not depend
on the material in this section.} and the bound
Eq.~\eqref{bound_LijT} in the case of the cofactor $\bar {\mathcal L}^T_{ij}$,
we obtain the estimate
\[
 \left( (\mathcal L^T)^{-1} \right)_{ij} \leq
\frac{ e^{-\beta ( \min_{\alpha \in T_d} W_{\alpha})(n-1)} (n-1)!B^T}{e^{-\beta \sum_{\alpha \in T_d}
 W_{\alpha}} g_T} \, ,
\]
where $g_T = \det(\partial_T^* \partial_T)$ depends only on $T$.

We bound the elements $\delta \mathcal L$ similarly by
\[
  | \delta \mathcal L_{jk}| \leq e^{-\beta (\min_{\gamma \in X_d \setminus T_d} W_{\gamma})} B^{\tilde T}\, ,
\]
where $B^{\tilde T}$ is defined in the obvious fashion.

Finally, the matrix elements of $(\mathcal L^T)^{-1} \partial \mathcal L$ then satisfy
the following inequality:
\begin{equation*}
\label{ineq:det}
 \left( (\mathcal L^T)^{-1} \partial \mathcal L \right)_{ik} \leq
  \frac{(n-1)! e^{-\beta (\min_{\alpha } W_{\alpha}) (n-1)} (B^T)^{n-1}
 n e^{-\beta \min_{\gamma} W_{\gamma}} B^{\tilde T}}
  {e^{-\beta \sum_{\alpha \in T_d} W_{\alpha}} g_T}\, .
\end{equation*}
Collecting
terms independent of $\beta$ into $N$, we see
\[
 \left( (\mathcal L^T)^{-1} \partial \mathcal L \right)_{ik} \leq
e^{-\beta \left( (n-1)\min_{\alpha} W_{\alpha} - \sum_{\alpha} W_{\alpha} +
\min_{\gamma} W_{\gamma} \right)} N
\]
where $\alpha \in T_d$ and $\gamma \in X_d \setminus T_d$. Our choice of $W$ forces
the matrix elements to 0 as $\beta \to \infty$. Therefore,
\[
 \lim_{\beta \to \infty} \, \frac{ \det \mathcal L}{\det \mathcal L^T} = \det I = 1,
\]
completing the proof.
\end{proof}

\section{A generalized form of Theorem \ref{thm:higher-matrix-tree} \label{app:extras}}

In this section we will identify the prefactor $\gamma$
appearing in Theorem \ref{thm:general-higher-matrix-tree}. We will also
generalize Theorem \ref{thm:higher-matrix-tree} in a significant way.

\subsection*{Covolume} If $A$ is a finitely generated abelian group we let
\[
A_{\Bbb R} :=A\otimes_{\mathbb{Z}} \mathbb{\Bbb R}
\]
 denote its {\it realification,} and
we let $\beta(A)=\dim_{\Bbb R} A_{\Bbb R}$ denote the rank of $A$.
Let $t(A)$ be the order of the torsion subgroup of $A$.

For a homomorphism $\alpha: A\to B$ of abelian groups, we denote
$\alpha_{\Bbb R}\: A_{\Bbb R}\to B_{\Bbb R}$ be the induced homomorphism of real vector spaces.

\begin{defn}
\label{def:quasi} A homomorphism $\alpha: A\to B$ of finitely generated abelian groups is called a {\it real isomorphism} if
the induced homomorphism $\alpha_{\Bbb R}: A_{\Bbb R} \to B_{\Bbb R}$ of real vector spaces is an isomorphism.
\end{defn}

Clearly, $\alpha$ is a real isomorphism if and only if its kernel and its cokernel are finite. If $\alpha$ is a real isomorphism, then $\beta(A)=\beta(B)$, where we recall that
$\beta(A)$ is the rank of $A$.
We will henceforth assume that $A$ and $B$ are free abelian. In this case
$\alpha$ is a real isomorphism if and only if $\alpha$ is a monomorphism with finite cokernel. 

\begin{defn} For $\alpha\: A \to B$ a real isomorphism with $A$ and $B$ free abelian, we
let \[t(\alpha)\in \Bbb N\] denote the order of the cokernel, i.e., $t(\alpha) := t(B/\alpha(A))$.
\end{defn}

An ordered basis
for $A$ determines an ordered basis for $A_{\Bbb R}$, and given any pair of ordered bases for $A$, the associated change of basis matrix for  $A_{\Bbb R}$ has determinant $\pm 1$.
This defines an equivalence relation on ordered bases for $A$ with exactly two distinct equivalence classes. A choice of equivalence class is referred to as an {\it orientation} of $A$.
Consequently, when orientations for $A$ and $B$ are chosen, and  $\alpha\: A \to B$ is a real isomorphism, then the determinant $\det \alpha \in \Bbb R$ is defined and depends
only on the choice of orientations. Furthermore, its absolute value
$|\det \alpha|$ is well defined and does not depend on the choice of orientations.
The latter has the following interpretation: choose an ordered basis for $B$. This defines an inner product on $B_{\Bbb R}$ making the ordered basis for $B$ into an orthonormal basis for $B_{\Bbb R}$. Then $\alpha(A) \subset B_{\Bbb R}$ is a lattice
and $|\det \alpha|$ is its {\it covolume}, that is, the volume of the torus $B_{\Bbb R}/\alpha(A)$ with respect to the induced Riemannian metric, or equivalently, the volume of a fundamental
domain of the universal covering  $B_{\Bbb R} \to B_{\Bbb R}/\alpha(A)$.

\begin{prop}
\label{prop:det-tor} For a real isomorphism $\alpha\: A\to B$ of finitely generated free abelian groups we have $|\det\alpha|=t(\alpha)$.
\end{prop}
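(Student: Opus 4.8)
The plan is to reduce the statement to the well-known Smith normal form for homomorphisms between free abelian groups, and then observe that both $|\det\alpha|$ and $t(\alpha)$ can be read off directly from the invariant factors. Since $\alpha\:A\to B$ is a real isomorphism with $A,B$ free abelian, by the preceding discussion it is a monomorphism with finite cokernel, so in particular $\beta(A)=\beta(B)=:n$.

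First I would invoke Smith normal form: there exist ordered bases $e_1,\dots,e_n$ of $A$ and $f_1,\dots,f_n$ of $B$, together with positive integers $d_1 \mid d_2 \mid \cdots \mid d_n$ (the invariant factors), such that $\alpha(e_i) = d_i f_i$ for each $i$. Equivalently, the matrix of $\alpha$ with respect to these bases is the diagonal matrix $\operatorname{diag}(d_1,\dots,d_n)$. Concretely, one writes an arbitrary matrix representative of $\alpha$ and applies integer row and column operations (left and right multiplication by elements of $GL_n(\Z)$), which amounts to changing the ordered bases of $B$ and $A$ respectively.

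Next I would compute each side in these bases. On the one hand, $B/\alpha(A) \cong \bigoplus_{i=1}^n \Z/d_i\Z$, so its order is $t(\alpha) = \prod_{i=1}^n d_i$. On the other hand, with respect to these bases the matrix of $\alpha_{\Bbb R}$ is $\operatorname{diag}(d_1,\dots,d_n)$, whose determinant is $\prod_{i=1}^n d_i$; hence $|\det\alpha| = \prod_{i=1}^n d_i$ as well. Here I should note that $|\det\alpha|$ is basis-independent — a point already established in the excerpt, since any change of ordered basis for $A$ or $B$ multiplies the determinant by $\pm 1$ — so it is legitimate to compute it in the convenient Smith bases. Comparing the two computations yields $|\det\alpha| = t(\alpha)$.

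The only genuine content is the existence of Smith normal form, which is standard; once it is in hand, both quantities are manifestly the product of the invariant factors and the proof is immediate. Thus there is no real obstacle, only bookkeeping: the main thing to be careful about is that the change-of-basis matrices used to diagonalize lie in $GL_n(\Z)$ (so they preserve the integral structure and the cokernel computation) while simultaneously having determinant $\pm 1$ over $\R$ (so they do not affect $|\det\alpha|$). Both facts hold because an element of $GL_n(\Z)$ has determinant $\pm 1$, so the two reductions are compatible.
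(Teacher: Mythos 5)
Your proof is correct, but it takes a genuinely different route from the paper's. You reduce to Smith normal form: choosing bases of $A$ and $B$ in which $\alpha$ is $\operatorname{diag}(d_1,\dots,d_n)$, both $|\det\alpha|$ and $t(\alpha)$ become the product $\prod_i d_i$ of the invariant factors, and the only point requiring care --- that the diagonalizing base changes lie in $GL_n(\Z)$ and hence have determinant $\pm 1$ --- is exactly the basis-independence of $|\det\alpha|$ already recorded in the preceding discussion. The paper instead argues geometrically: it fixes a basis of $B$ to get an inner product on $B_{\R}$, observes that $B_{\R}/\alpha(A) \to B_{\R}/B$ is a local isometry and a covering of degree $|B/\alpha(A)| = t(\alpha)$, and concludes that the covolume of $\alpha(A)$ is $t(\alpha)$ times the covolume of $B$, which is $1$. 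The two arguments buy slightly different things: yours is purely algebraic and self-contained modulo the (standard) existence of Smith normal form, and makes the equality visibly a statement about invariant factors; the paper's stays entirely within the covolume picture it has just set up, which is the language used repeatedly later (for $\mu_X$, $\eta_k$, and the $\gamma$-computations), so it reinforces the interpretation of $|\det\alpha|$ as a covolume without ever needing to diagonalize. Either proof is acceptable here.
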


\begin{proof} Choose an ordered basis for $B$, and give $B_{\Bbb R}$ the induced inner product.

Consider the inclusions $\alpha(A) \subset B\subset B_{\Bbb R}$. Then we have a finite covering space
\[
B/\alpha(A) \to B_{\Bbb R}/\alpha(A) \to B_{\Bbb R}/B\, ,
\]
in which the covering projection $B_{\Bbb R}/\alpha(A) \to B_{\Bbb R}/B$ is a local isometry and $B_{\Bbb R}/\alpha(A)$ is the fiber over
the basepoint.
This shows that the covolume of $\alpha(A)$ is the product of the covolume of $B \subset B_{\Bbb R}$ with $|B/\alpha(A)| = t(\alpha)$. But the covolume of $B \subset B_{\Bbb R}$ is $1$.
\end{proof}

\subsection*{Generalization of Theorem  \ref{thm:general-higher-matrix-tree}}
Recall that for $W\: X_d\to \R$, we have the operator
\[
\cal L(W) = \partial e^{-W}\partial^\ast\: B_{d-1}(X;\R)@> \cong >> B_{d-1}(X;\R)\,
\]
which is just $\cal L^{R} = \partial\partial^\ast_{R}$,
as defined in the introduction, with  $R = e^W$. Again, we
suppress the argument $W$ from the notation and refer to $\cal L(W)$ as
$\cal L$.

As we showed earlier in Proposition \ref{prop:derived}, we have the following representation:
\begin{eqnarray}
\label{Kirchhoff-matrix} \det{\cal L}=\gamma\sum_{T} w_{T},
\end{eqnarray}
where the constant $\gamma$ is still to be determined.

\begin{defn}
Let $A\subset C_{d-1}(X;\Bbb Z)$ be a subgroup.
Define
a natural number
\[
\mu(A) \in \Bbb N
\]
as follows: let $\{e_i\}$ be a basis for $A$. Consider the matrix $g$ whose $(i,j)$-entry is
given by $g_{ij} = \langle e_i,e_j\rangle$, where the inner product
is taken in $C_{d-1}(X;\R)$.  
Set $\mu(A) := \det g$.
\end{defn}

Since  $e_i$ expressed in the standard basis for $C_{d-1}(X;\R)$
has integer components, we infer that $g_{ij} \in \Z$, so $\mu(A)$ is an integer.
Alternatively, one can define $\mu(A)$ as the {\it square} of the covolume of the lattice $A \subset A_{\Bbb R}$ given by restricting
the standard inner product of $C_{d-1}(X;\R)$ to $A_{\R}$. The equivalence of the two definitions can be seen as follows:
let $B$ be the matrix whose rows are
the vectors $e_i$ expressed in an orthonormal basis for $C_{d-1}$. 
Then $|\det B|$ is the covolume
of $A \subset A_{\R}$. Furthermore, $g = BB^\ast$, so $\mu(A) = \det g = (\det B)^2 \in \Bbb N$.

For any abelian group $U$, we set
\[
B^U_{d-1} := B_{d-1}(X;U)\, ,
\]
that is, the image of the boundary operator $\partial\: C_d(X;U) \to C_{d-1}(X;U)$ of the cellular chain complex of $X$ with $U$ coefficients.

\begin{hypo_no}\label{hypo:projection}  The inclusion $A \subset C_{d-1}(X;\R)$
is such that the orthogonal projection
$P_{A}: B_{d-1}^{\R} \to A_{\Bbb R}$ is induced by a real isomorphism
 $p_{A}: B^{\Bbb Z}_{d-1}\to A$, i.e., $P_{A}=({p}_{A})_{\R}$.
 \end{hypo_no}
 
 Consider the composite operator
 \[
 {\cal L}_{A}: A_{\Bbb R} @> \cong >> A_{\Bbb R}
 \]
 defined by ${\cal L}_{A}=P_{A}\partial e^{-W}\partial^{*}|_{A_{\Bbb R}}$.

\begin{thm}[Generalized Higher Weighted Matrix-Tree Theorem] \label{thm:general-higher-matrix-tree}
We have
\begin{eqnarray}
\label{Kirchhoff-matrix-2}
\det{\cal L}_{A}=\gamma_{A}\sum_{T}w_{T}\, ,
\end{eqnarray}
where the prefactor is given by
\begin{eqnarray}
\label{Kirchhoff-matrix-normalization} \gamma_{A}=\frac{\mu(A) t(p_{A})^{2}}{\theta^2_X}.
\end{eqnarray}
\end{thm}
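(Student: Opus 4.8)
The plan is to reduce the determinant of $\mathcal L_A$ to that of the base operator $\mathcal L=\partial e^{-W}\partial^\ast$ on $B_{d-1}(X;\R)$, for which Proposition \ref{prop:derived} and \eqref{Kirchhoff-matrix} already give $\det\mathcal L=\gamma\sum_T w_T$ with $\gamma$ undetermined, and then to pin down $\gamma$ via the low temperature limit. For the reduction I would first express $\mathcal L_A$ as a conjugate of $\mathcal L$. Since $\ker\partial^\ast=(B_{d-1}(X;\R))^\perp$, the operator $\partial e^{-W}\partial^\ast$ on $C_{d-1}(X;\R)$ factors as $\mathcal L\circ\Pi$, where $\Pi\colon C_{d-1}(X;\R)\to B_{d-1}(X;\R)$ is the orthogonal projection. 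Setting $\phi:=\Pi|_{A_\R}\colon A_\R\to B_{d-1}(X;\R)$, a one-line inner product check shows $\phi^\ast=P_A$ (both map $b\in B_{d-1}(X;\R)$ to its orthogonal projection onto $A_\R$). Hence $\mathcal L_A=\phi^\ast\mathcal L\phi$, and $\phi$ is an isomorphism since $P_A$ is (Hypothesis \ref{hypo:projection}). Working in orthonormal bases of $A_\R$ and $B_{d-1}(X;\R)$ turns $\phi^\ast$ into the transpose of $\phi$, so $\det\mathcal L_A=(\det\phi)^2\det\mathcal L$.

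Next I would evaluate $(\det\phi)^2$ by covolumes. Since $P_A=(p_A)_\R$ carries the lattice $B_{d-1}(X;\Z)$ isomorphically onto the sublattice $p_A(B_{d-1}(X;\Z))\subset A$ of index $t(p_A)$, the covolume of the image is $t(p_A)\sqrt{\mu(A)}$ while that of the source is $\sqrt{\mu_X}$, where $\mu_X$ denotes the squared covolume of $B_{d-1}(X;\Z)$. The Jacobian of $\phi^\ast=P_A$ in orthonormal bases is the ratio of these covolumes, so $(\det\phi)^2=|\det P_A|^2=\mu(A)t(p_A)^2/\mu_X$, giving $\det\mathcal L_A=\dfrac{\mu(A)t(p_A)^2}{\mu_X}\det\mathcal L$. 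This step is the analogue of Proposition \ref{prop:det-tor} in the presence of the two non-unimodular lattices.

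It remains to show $\gamma=\mu_X/\theta_X^2$; substituting then yields $\gamma_A=\mu(A)t(p_A)^2/\theta_X^2$ and finishes the proof. To identify $\gamma$, which is independent of $W$, I would fix a spanning tree $T_0$, replace $W$ by $\beta W$ for some $W$ that is good for $T_0$, and let $\beta\to\infty$. On one side, the good condition makes $T_0$ the uniquely dominant tree, so $\sum_T w_T\sim\theta_{T_0}^2\,e^{-\beta\sum_{b\in (T_0)_d}W_b}$; on the other, Proposition \ref{prop:goodW} gives $\det\mathcal L\sim\det\mathcal L^{T_0}$, and a direct computation using the isomorphism $\partial_{T_0}\colon C_d(T_0;\R)\xrightarrow{\cong}B_{d-1}(X;\R)$ gives $\det\mathcal L^{T_0}=g_{T_0}\,e^{-\beta\sum_{b\in (T_0)_d}W_b}$ with $g_{T_0}=\det(\partial_{T_0}^\ast\partial_{T_0})$ equal to the squared covolume of $B_{d-1}(T_0;\Z)$. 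Matching the two asymptotics yields $\gamma=g_{T_0}/\theta_{T_0}^2$.

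I expect the main obstacle to be the torsion bookkeeping that makes this consistent: since $\gamma$ cannot depend on $T_0$, I must verify $g_{T_0}/\theta_{T_0}^2=\mu_X/\theta_X^2$. Here $B_{d-1}(T_0;\Z)\subset B_{d-1}(X;\Z)$ have the same realification, so $g_{T_0}=m^2\mu_X$ with $m=[B_{d-1}(X;\Z):B_{d-1}(T_0;\Z)]$. Both boundary lattices share the saturation $\bar B=Z_{d-1}(X;\Z)\cap B_{d-1}(X;\R)$ inside the common cycle group $Z_{d-1}$, and the torsion subgroups of $H_{d-1}(T_0;\Z)$ and $H_{d-1}(X;\Z)$ are exactly $\bar B/B_{d-1}(T_0;\Z)$ and $\bar B/B_{d-1}(X;\Z)$, whence $m=\theta_{T_0}/\theta_X$. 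Substituting gives $g_{T_0}/\theta_{T_0}^2=m^2\mu_X/\theta_{T_0}^2=\mu_X/\theta_X^2$, completing the identification of $\gamma$ and hence the theorem.
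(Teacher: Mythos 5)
Your argument is correct and follows essentially the same route as the paper: factor $\mathcal L_A$ through the projection to get $\det\mathcal L_A=\det(P_AP_A^*)\det\mathcal L$, identify the remaining constant by the low-temperature limit at a single spanning tree, and convert $\mu_{T_0}/\theta_{T_0}^2$ into $\mu_X/\theta_X^2$ via the index $[B_{d-1}(X;\Z):B_{d-1}(T_0;\Z)]=\theta_{T_0}/\theta_X$. The only difference is organizational: you evaluate $\det(P_AP_A^*)$ directly against the lattice $B_{d-1}(X;\Z)$ and run the limit for $\mathcal L$ itself, whereas the paper runs the limit for $\mathcal L_A$ and expresses everything through the tree's lattice $B_{d-1}(T;\Z)$ and $t(p_A^T)$ before invoking the same index relation \eqref{relations}.
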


\begin{rem} The choice $A= B_{d-1}(X;\Bbb Z)$ gives Theorem \ref{thm:higher-matrix-tree}.
\end{rem}

\begin{rem} If $A = A_{S}$ is the free abelian group generated by a
judiciously chosen subset $S \subset X_{d-1}$, we will obtain $\mu(A_{S})=1$.
Using this choice of $A$ as well as $W = 0$,
Theorem \ref{thm:general-higher-matrix-tree}
gives a generalization of the main result of \cite{Petersson} to CW complexes.
\end{rem}

\begin{proof}[Proof of Theorem \ref{thm:general-higher-matrix-tree}]
As above, we have
\[
\cal L := \partial\partial_R^\ast = \partial e^{-W}\partial^\ast\: B_{d-1}(X;\R) @> \cong >> B_{d-1}(X;\R)\, .
\]
Then
\[
{\cal L}_{A} = P_{A}{\cal L}P_{A}^{*}\, ,
\]
 which implies
\begin{equation} \label{eq:detLA}
 \det{\cal L}_{A}=\det({\cal L})\det(P_{A}P_{A}^{*})\, .
\end{equation}
If we apply this to Eq.~(\ref{Kirchhoff-matrix}), we reproduce Eq.~(\ref{Kirchhoff-matrix-2}) with $\gamma_{A}=\gamma\det(P_{A}P_{A}^{*})$. It suffices to identify the prefactor $\gamma_A$.

Consider  the operator ${\cal L}^{T}$ for some spanning tree. We have
\begin{equation} \label{eq:detLAT}
\det{\cal L}^{T}=\det({\partial}_{T}e^{-W}{\partial}_{T}^{*})=\det({\partial}_{T}{\partial}_{T}^\ast e^{-W})=\tfrac{w_{T}}{\theta^2_T}\det({\partial}_{T}{\partial}_{T}^\ast)
\end{equation}

Applying Eq.~\eqref{Kirchhoff-matrix-2} in the case of good $W$ and in the low temperature limit, the left hand side of that equation
 tends to the determinant of the operator ${\cal L}_{A}^{T}$ for the spanning tree $T\subset X$ of  maximal weight, whereas the right hand side is dominated by the single contribution associated with the same spanning tree $T$. Consequently, Eq.~\eqref{eq:detLAT} implies
\begin{eqnarray}
\label{Kirchhoff-matrix-3}
\det\left(\partial_{T}\partial_{T}^\ast\right)\det\left(P^T_{A}(P^T_{A})^{*}\right) =
\gamma_{A}\theta^{2}_T.
\end{eqnarray}
Since $P^T_{A}= (p^T_{A})_{\R}$, where the real isomorphism
$p_{A}^{T}: B_{d-1}(T;\Bbb Z))\to A$ is obtained by composing the real isomorphism $p_{A}: B_{d-1}(X;\Bbb Z)\to A$ with the inclusion $B_{d-1}(T;\Bbb Z)\subset B_{d-1}(X;\Bbb Z)$, we have
\[
\det\left(P_{A}P_{A}^{*}\right)=\mu(A)(\mu(B_{d-1}(T;\Bbb Z)))^{-1}(\det p_{A}^{T})^{2}\, .
\]
 We further note that, since $T$ is a spanning tree, the free abelian
 group $B_{d-1}(T;\Z)$ has basis  $\{\partial e_{1},\dots,\partial e_{s}\}$, where $e_{1},\ldots,e_{s}$ are the $k$-cells of $T$, so that we have a matrix $g$ of inner products with the matrix elements $g_{ij}=\langle\partial_{T} e_{i},\partial_{T} e_{j}\rangle =\langle \partial_{T}^{*}\partial_{T}e_{i},e_{j}\rangle $, which implies $\mu(B_{d-1}(T;\Bbb Z))=\det(\partial_{T}^{*}\partial_{T})$. Then Eq.~(\ref{Kirchhoff-matrix-3}) assumes the form
 \[
 \mu(A)(\det p_{A}^{T})^{2}=\gamma_{A}\theta^2_T\, .
 \]
 Combining this with Proposition~\ref{prop:det-tor} results in
\begin{eqnarray}
\label{gamma-expression}
\gamma_{A}= \frac{\mu(A)t(p_{A}^{T})^{2}}{\theta^2_T}.
\end{eqnarray}
The right side of Eq.~\eqref{gamma-expression} is written in
terms of a particular spanning tree $T$, however, it does not actually depend on
this choice. An invariant expression that does not contain $T$ is obtained by using the following relations:
\begin{eqnarray} \label{eq:t-theta-relation}
\label{relations} \frac{t(p_{A}^{T})}{t(p_{A})}=t(B_{d-1}(X;\Bbb Z)/B_{d-1}(T;\Bbb Z))=\frac{\theta_T}{\theta_X}.
\end{eqnarray}
Substituting Eq.~(\ref{relations}) into Eq.~(\ref{gamma-expression}) results in an invariant expression for $\gamma_{A}$, given by Eq.~(\ref{Kirchhoff-matrix-normalization}).
\end{proof}

\subsection*{Alternative forms of Theorem \ref{thm:higher-matrix-tree}}
In this subsection we deduce Addendum \ref{addC} as well as a generalization of it to the weighted case. 
Let us now return to the more general situation of Theorem \ref{thm:general-higher-matrix-tree}.

\begin{thm} \label{thm:LA-decomp} With $A \subset C_{d-1}(X;\Z)$ as above, we have
\[
\det \cal L_A = \sum_T \det \cal L_A^T \, .
\]
\end{thm}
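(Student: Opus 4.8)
The plan is to establish the identity $\det \cal L_A = \sum_T \det \cal L_A^T$ by showing that both sides are equal to $\gamma_A \sum_T w_T$ when evaluated correctly, leveraging Theorem~\ref{thm:general-higher-matrix-tree} already proved. The left side is immediately handled: Theorem~\ref{thm:general-higher-matrix-tree} gives $\det \cal L_A = \gamma_A \sum_T w_T$ directly. So the entire content reduces to proving that the right side, $\sum_T \det \cal L_A^T$, equals the same quantity. Thus the key is to obtain a clean formula for each individual summand $\det \cal L_A^T$.

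First I would unwind $\det \cal L_A^T$ using the computations already assembled in the proof of Theorem~\ref{thm:general-higher-matrix-tree}. From Eq.~\eqref{eq:detLAT} we have $\det \cal L^T = \tfrac{w_T}{\theta_T^2}\det(\partial_T \partial_T^*)$, and from the factorization $\cal L_A^T = P_A^T \cal L^T (P_A^T)^*$ (the spanning-tree analogue of $\cal L_A = P_A \cal L P_A^*$) together with Eq.~\eqref{eq:detLA} applied at the level of $T$, I obtain
\begin{equation*}
\det \cal L_A^T = \det(\cal L^T)\det\bigl(P_A^T (P_A^T)^*\bigr)\, .
\end{equation*}
Combining these with the relations derived in the main proof, namely $\det(P_A^T(P_A^T)^*) = \mu(A)\,(\mu(B_{d-1}(T;\Z)))^{-1}(\det p_A^T)^2$ and $\mu(B_{d-1}(T;\Z)) = \det(\partial_T^*\partial_T)$, the factors of $\det(\partial_T\partial_T^*)$ cancel, yielding
\begin{equation*}
\det \cal L_A^T = \frac{w_T}{\theta_T^2}\,\mu(A)\,(\det p_A^T)^2\, .
\end{equation*}

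Next I would invoke Proposition~\ref{prop:det-tor} to replace $(\det p_A^T)^2$ by $t(p_A^T)^2$, and then use the relation Eq.~\eqref{relations}, $t(p_A^T)/t(p_A) = \theta_T/\theta_X$, to rewrite $t(p_A^T)^2 = t(p_A)^2\,\theta_T^2/\theta_X^2$. Substituting gives
\begin{equation*}
\det \cal L_A^T = \frac{w_T}{\theta_T^2}\cdot\frac{\mu(A)\,t(p_A)^2\,\theta_T^2}{\theta_X^2}
= \frac{\mu(A)\,t(p_A)^2}{\theta_X^2}\,w_T = \gamma_A\, w_T\, ,
\end{equation*}
where the last equality is exactly the formula \eqref{Kirchhoff-matrix-normalization} for $\gamma_A$. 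Summing over all spanning trees then produces $\sum_T \det \cal L_A^T = \gamma_A \sum_T w_T = \det \cal L_A$, which is the claim.

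The main obstacle I anticipate is purely bookkeeping: making sure the spanning-tree versions of the projection and covolume identities are legitimate. Specifically, one must verify that Hypothesis~\ref{hypo:projection} transfers from $X$ to $T$ so that $P_A^T = (p_A^T)_\R$ for the real isomorphism $p_A^T\colon B_{d-1}(T;\Z)\to A$ used in the main proof, and that the factorization $\cal L_A^T = P_A^T \cal L^T (P_A^T)^*$ is the correct analogue on the subspace $A_\R$. These are implicit in the preceding proof, so the work here is to cite them cleanly rather than rederive them. Once the per-tree formula $\det \cal L_A^T = \gamma_A w_T$ is in hand, the theorem is immediate; the only genuine step is recognizing that $\gamma_A$ is tree-independent, which is precisely what was established via Eq.~\eqref{relations} in the proof of Theorem~\ref{thm:general-higher-matrix-tree}.
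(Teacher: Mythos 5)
Your proof is correct and takes essentially the same route as the paper's: both reduce the theorem to the per-tree identity $\det \cal L_A^T = \gamma_A w_T$, whose crux is the tree-independence of $\gamma_A$ coming from Eq.~\eqref{relations}. The paper gets that identity in one line by applying Theorem~\ref{thm:general-higher-matrix-tree} to $T$ itself (whose only spanning tree is $T$), while you re-derive it by explicitly unwinding Eq.~\eqref{eq:detLAT}, the projection-determinant formula, and Proposition~\ref{prop:det-tor} --- the same ingredients, just expanded by hand.
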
 

\begin{proof} Using Eq.~\eqref{eq:t-theta-relation}  we infer that
\[
\gamma_A = \frac{\mu(A)t(P_A)^2}{\theta^2_X} =  \frac{\mu(A)t(P^T_A)^2}{\theta^2_T}
\] 
for any spanning tree $T$.
Combining this with Theorem \ref{thm:general-higher-matrix-tree} in the case of 
a spanning tree $T$ we obtain
\[
\det \cal L^T_A = \gamma_A w_T \, .
\]
The conclusion now follows by summing over all $T$.
\end{proof}

In the special case when $A = B_{d-1}(X;\Z)$, Theorem \ref{thm:LA-decomp}
reduces to

\begin{cor}  \label{cor:Lmu} $\det \cal L = \sum_T \det \cal L^T = \sum_T \mu_T$.
\end{cor}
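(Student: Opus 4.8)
The plan is to obtain both equalities by specializing Theorem \ref{thm:LA-decomp} to $A = B_{d-1}(X;\Z)$ together with the unweighted operator $\cal L = \partial\partial^*$ (i.e.\ $W = 0$), and then to identify each summand $\det \cal L^T$ with $\mu_T$ using the covolume computation already carried out inside the proof of Theorem \ref{thm:general-higher-matrix-tree}. The displayed chain really breaks into two independent assertions: the first, $\det \cal L = \sum_T \det \cal L^T$, is a verbatim instance of Theorem \ref{thm:LA-decomp}; the second, $\sum_T \det \cal L^T = \sum_T \mu_T$, holds termwise.

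For the first equality I would check that the constructions of Section \ref{app:extras} collapse when $A = B_{d-1}(X;\Z)$. Here $A_\R = B_{d-1}(X;\R)$, so the orthogonal projection $P_A\: B_{d-1}^\R \to A_\R$ is the identity, and Hypothesis \ref{hypo:projection} holds trivially with $p_A = \id$ and $t(p_A) = 1$; consequently $\cal L_A = P_A \cal L P_A^* = \cal L$. The one point needing a remark is that the spanning-tree summand $\cal L_A^T$ coincides with $\cal L^T$: since $T$ is a spanning tree, $\beta_{d-1}(T) = \beta_{d-1}(X)$ and $H_d(T) = 0$ force $\dim B_{d-1}(T;\R) = |T_d| = |X_d| - \beta_d(X) = \dim B_{d-1}(X;\R)$, and combined with the inclusion $B_{d-1}(T;\R) \subseteq B_{d-1}(X;\R)$ this yields an equality $B_{d-1}(T;\R) = B_{d-1}(X;\R)$ of subspaces. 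Thus $P^T_A = \id$ and $\cal L_A^T = \cal L^T$, so Theorem \ref{thm:LA-decomp} reads $\det \cal L = \sum_T \det \cal L^T$.

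For the second equality it suffices to prove $\det \cal L^T = \mu_T$ for a fixed spanning tree. Specializing Eq.~\eqref{eq:detLAT} to $W = 0$ gives $\det \cal L^T = \tfrac{w_T}{\theta_T^2}\det(\partial_T\partial_T^*) = \det(\partial_T\partial_T^*)$, since $w_T = \theta_T^2$ when $r \equiv 1$. Because $H_d(T;\R) = 0$, the map $\partial_T\: C_d(T;\R) \to B_{d-1}(T;\R)$ is a square isomorphism, whence $\det(\partial_T\partial_T^*) = \det(\partial_T^*\partial_T)$; and the latter was identified in the proof of Theorem \ref{thm:general-higher-matrix-tree} as the Gram determinant of the lattice basis $\{\partial_T e_1,\dots,\partial_T e_s\}$ of $B_{d-1}(T;\Z)$, namely $\mu(B_{d-1}(T;\Z)) = \mu_T$. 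Summing over $T$ gives $\sum_T \det \cal L^T = \sum_T \mu_T$.

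I do not expect a genuine obstacle, as both ingredients are already in hand; the only step requiring care is the identification $B_{d-1}(T;\R) = B_{d-1}(X;\R)$, which legitimizes viewing $\cal L^T$ and $\cal L$ as operators on the same space and lets the passage from $\det(\partial_T\partial_T^*)$ to $\det(\partial_T^*\partial_T)$ rest on $\partial_T$ being a square isomorphism rather than merely surjective. Alternatively, one can bypass Eq.~\eqref{eq:detLAT} entirely: $\det \cal L^T = \gamma_X\,\theta_T^2 = \tfrac{\mu_X}{\theta_X^2}\theta_T^2$, and relation \eqref{eq:t-theta-relation} gives $\mu_T = (\theta_T/\theta_X)^2\mu_X$, again recovering $\det \cal L^T = \mu_T$.
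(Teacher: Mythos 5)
Your proposal is correct and follows essentially the paper's own route: the corollary is obtained by specializing Theorem \ref{thm:LA-decomp} to $A = B_{d-1}(X;\Z)$ (where $P_A = \id$ and $t(p_A)=1$, so $\cal L_A = \cal L$), and the identification $\det \cal L^T = \det(\partial_T\partial_T^*) = \det(\partial_T^*\partial_T) = \mu_T$ that you spell out is exactly the covolume computation already embedded in Eq.~\eqref{eq:detLAT} and the proof of Theorem \ref{thm:general-higher-matrix-tree}. You simply make explicit the details the paper leaves implicit, such as the equality $B_{d-1}(T;\R) = B_{d-1}(X;\R)$ justifying that $\cal L^T$ and $\cal L$ act on the same space.
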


\section{Reidemeister torsion and Theorem \ref{thm:torsion-tree} \label{sec:torsion-tree}}

\subsection*{Reidemeister torsion} 
Milnor \cite{Milnor} defined the Reidemeister torsion of a not necessarily acyclic
finite chain complex over a field equipped with the auxiliary structure of an ordered
basis of its chains as well as a choice of ordered basis of its
homology groups. In this section we restrict ourselves to the
case of torsion for chain complexes defined
over the real numbers. 

Consider the case of a chain complex $C_\ast$ of finite dimensional vector spaces
over $\R$ having non-trivial terms in degrees $0 \le \ast \le d$. Let
 $\partial \: C_k \to C_{k-1}$ be the boundary operator.
 Let $Z_k \subset C_k$ be the subspace of $k$-cycles and let $B_k \subset Z_k$ the subspace
of $k$-boundaries. We also set $H_k = Z_k/B_k$. 

We then have short exact sequences
\[
0 \to Z_k \to C_k \to B_{k-1} \to 0 \qquad \text{ and } \qquad 
0 \to B_k \to Z_k \to H_k\to 0\, .
\]
If we choose splittings  $s_{k-1}\:B_{k-1} \to C_k$ and $t_k\:H_k \to Z_k$,
we are entitled to write
$C_k \cong Z_k \oplus B^k \cong B_k \oplus H_k \oplus B^k$.

Pick bases
${\mathfrak b_k} := \{b_k^i\}, {\mathfrak c}_k := \{c_k^i\}, 
{\mathfrak h_k} := \{h_k^i\}$ for $B_k, C_k$, and $H_k$, respectively.
It follows that $\{ s_{k-1}(b_{k-1}^i), t_k(h_k^i), b_k^i\}$
forms another basis for $C_k$. Let $\{\mathfrak b_{k}\mathfrak h_k \mathfrak b_{k-1}\}$ denote this basis
and let 
\[
[\mathfrak b_{k}\mathfrak h_k \mathfrak b_{k-1}/\mathfrak c_k]
\]
denote the change of basis matrix that expresses the basis 
$\mathfrak b_{k}\mathfrak h_k \mathfrak b_{k-1}$ in terms of
the basis $\mathfrak c_k$. 
Let $\mathfrak c = \{\mathfrak c_k\}$ and $\mathfrak h = \{\mathfrak h_k\}$.

\begin{defn}[Milnor {\cite[p.~365]{Milnor}}] The {\it torsion} of the pair $(C_\ast,\mathfrak h)$ is defined by 
\[
  \tau(C_\ast) = 
  \prod_{k \geq 0}  \det[\mathfrak b_{k}\mathfrak h_k \mathfrak b_{k-1}/\mathfrak c_k]  ^{(-1)^k}\, ,
\]
which is consistent with Milnor's definition with respect to the identification
of $K_1(\R) \cong \R^\times$ given by the determinant function.
\end{defn}

Milnor shows that the definition is independent of the choice of $\mathfrak b$ as well as
the splittings. Thus the torsion is really an invariant of the triple $(C_\ast,\mathfrak c,\mathfrak h)$.

In what follows,  $C_\ast = C_\ast(X;\R)$ is the cellular chain complex of a finite connected CW complex $X$ which has a preferred basis consisting of the set of cells. In this case, we think of the torsion as an invariant of the pair $(X,\mathfrak h)$ and we set
\[
\tau(X;\mathfrak h) := \tau(C_\ast(X;\R)) \, ,
\]
where we have indicated in the notation the dependence on the choice of homology basis.
It will be useful to single out a specific kind of homology basis. Let $H_\ast(X;\Z)_0 \subset
H_\ast(X,\R)$ be the lattice given by taking
 the image of the evident homomorphism $H_\ast(X;\Z) \to H_\ast(X;\R)$. Note that
 $H_\ast(X;\Z)_0$ has a preferred isomorphism to the torsion free part of $H_\ast(X;\Z)$.

\begin{defn} A {\it combinatorial basis} for $H_\ast(X;\R)$ consists of a basis for
$H_k(X;\Z)_0$ for $k \ge 0$. 
\end{defn}

Henceforth we fix a combinatorial basis $\mathfrak h$.
Let $r \: \amalg_k X_k \to \R_+$ be a positive-valued function on the
set of cells of $X$.  As in previous sections, we write $R\: C_\ast(X;\R) \to C_\ast(X;\R)$ for the linear 
transformation determined by $b\mapsto r_b b$, and $R = e^W$. We have 
a modified inner product $\langle b,b'\rangle_R = \langle r_b b,b'\rangle$.
We also have an operator
\[
\cal L_k(W) = \partial \partial^*_R  := \partial e^{-W_{k+1}} \partial^* e^{W_k}\: B_k(X;\R) \to B_k(X;\R)\, ,
\]
where $\partial_R^*$ is the formal adjoint to $\partial\: C_{k+1}(X;\R) \to C_k(X;\R)$
in the modified inner product on both source and target.
We define $H^R_k(X;\R)$
to be the orthogonal compliment of $B_k(X;\R)$ in $Z_k(X;\R)$ with respect
to modified inner product on $C_k(X;\R)$, and we then have a preferred identification
$H^R_k(X;\R) \cong H_k(X;\R)$ given by sending a cycle to its homology class. 
As in the introduction, we let $\eta_k$ be
the square of the covolume of $H_k(X;\Z)_0 \subset H^R_k(X;\R)$, with
respect to the basis $\mathfrak h_k$ for $H_k(X;\Z)_0$ and the inner
product on $H^R_k(X;\R)$ obtained by restricting the modified inner product on $C_k(X;\R)$.\footnote{In the introduction, $\eta_k$ was defined only in the case when $W=0$; the current
notation applies to an arbitrary $W$.}

\begin{thm}\label{thm:tor-det-L} 
Let $X$ be a finite, connected CW complex. Then
\begin{equation*}
\tau^2(X;\mathfrak h) = \frac{\prod_{k \mbox{ } \mathrm{even}} \det \cal L_k(W)}{\prod_{k
\mbox{ }\mathrm{odd}}
\det \cal L_k(W)} \cdot \frac{\prod_{k \mbox{ } \mathrm{odd}, b \in X_k} e^{W_{kb}}}{\prod_{k
\mbox{ } \mathrm{even},
b \in X_k} e^{W_{kb}}} \cdot \frac{\prod_{k \mbox{ } \mathrm{even}} \eta_k}{\prod_{k
\mbox{ } \mathrm{odd}}
\eta_k}  
 \, . 
\end{equation*}
\end{thm}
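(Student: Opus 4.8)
The plan is to compute $\tau^2(X;\mathfrak h)$ directly from Milnor's definition, choosing the splittings $s_k$ and the auxiliary bases $\mathfrak b_k$ in a way that is adapted to the modified inner product, so that the change-of-basis determinants $\det[\mathfrak b_k \mathfrak h_k \mathfrak b_{k-1}/\mathfrak c_k]$ factor into the pieces appearing on the right-hand side. The key observation is that with respect to $\langle\ ,\ \rangle_R$ we have an orthogonal direct sum decomposition $C_k(X;\R) \cong B_k \oplus H^R_k \oplus B^R_{k}{}'$, where $B^R_k{}'$ is the orthogonal complement of $Z_k$ in $C_k$; the restriction of $\partial$ identifies $B^R_k{}'$ with $B_{k-1}$, and this identification is essentially $\cal L_k(W)$ up to the adjoint factors. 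So the first step is to fix, for each $k$, an orthonormal (in the modified inner product) basis of $B^R_k{}'$ and push it forward by $\partial$ to obtain a convenient basis $\mathfrak b_{k-1}$ of $B_{k-1}$.

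Second, I would expand each factor $\det[\mathfrak b_k \mathfrak h_k \mathfrak b_{k-1}/\mathfrak c_k]$. Because $\mathfrak c_k$ is the standard cell basis, which is orthonormal in the \emph{unmodified} inner product but \emph{not} in the modified one, these determinants relate to Gram determinants computed in the modified inner product, up to the discrepancy factor between the two inner products, which is exactly $\prod_{b\in X_k} e^{W_{kb}}$ (since $R = e^W$ scales the $b$-th basis vector by $e^{W_b}$). This produces the middle product $\prod e^{W_{kb}}$ with its alternating sign. The term $\det \cal L_k(W)$ arises as the Gram determinant of the chosen basis of $B^R_k{}'$ paired against $\mathfrak b_{k-1}$, i.e.\ as the determinant of $\partial e^{-W_{k+1}}\partial^* e^{W_k}$ restricted to $B_k$, and the factor $\eta_k$ arises as the Gram determinant of the homology part $\mathfrak h_k$ computed in $H^R_k(X;\R)$ with the modified inner product. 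Squaring $\tau$ converts every $\det[\cdots]$ into a Gram determinant, which is what lets these geometric covolume quantities appear and removes the sign ambiguity inherent in a choice of orientation.

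Third, I would assemble the alternating product $\prod_k(\cdots)^{(-1)^k}$ and carefully track which index each $B$-block contributes to. The subtle bookkeeping point is that $\mathfrak b_{k-1}$ appears in the factor for $C_k$ while the corresponding basis of $B^R_k{}'$ (from which it was pushed) lives over $C_k$ as well, but the \emph{operator} $\cal L_k(W)$ is indexed by $B_k$; a single $\cal L_k(W)$ thus straddles two consecutive degrees in the telescoping product, and I must verify the signs $(-1)^k$ combine so that $\det \cal L_k(W)$ lands in the numerator for $k$ even and denominator for $k$ odd, matching the statement. The same telescoping governs the $e^{W}$ and $\eta$ factors.

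The main obstacle I expect is precisely this sign/index bookkeeping in the telescoping product together with justifying that the answer is independent of all the auxiliary choices (the splittings $s_k$, $t_k$, and the chosen bases of $B^R_k{}'$). Milnor's theorem already guarantees $\tau$ is independent of $\mathfrak b$ and the splittings, so I can invoke that to make the cleanest possible choices; the real work is confirming that the three displayed factors are each genuinely independent of these choices and assemble correctly. I would handle this by computing with one explicit adapted system of bases and then appealing to Milnor's invariance result to conclude that the formula holds in general, rather than checking invariance of each factor by hand.
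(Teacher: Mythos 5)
Your proposal is correct and follows essentially the same route as the paper: both decompose $C_k$ orthogonally with respect to the modified inner product, extract the factor $\prod_{b\in X_k}e^{W_{kb}}$ as the discrepancy between Gram determinants in the modified versus standard inner products, read off $\eta_k$ from the homology block, and obtain $\det\mathcal L_k(W)$ from the Gram determinant of the boundary block, invoking Milnor's invariance to justify the convenient choice of splittings and auxiliary bases. The only cosmetic difference is normalization: the paper takes $\mathfrak b_{k-1}$ to be an integral basis and computes the ratio $\gamma^k/\gamma_{k-1}=1/\det\mathcal L_{k-1}$ via $s_{k-1}^*s_{k-1}=\mathcal L_{k-1}^{-1}$, whereas you normalize the lifted basis of the complement of $Z_k$ to be orthonormal, which is the same telescoping computation.
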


\begin{rem}  If we take $W = 0$, then  Theorem \ref{thm:tor-det-L} immediately implies that 
$\tau^2(X;\mathfrak h)$ is an invariant of the lattice $H_\ast(X;\Z)_0 \subset H_\ast(X;\R)$
rather than just an invariant of the specific choice of combinatorial basis $\mathfrak h$.
Since this lattice doesn't depend on any choices, we infer that 
$\tau^2(X;\mathfrak h)$ depends only on the CW structure of $X$. In fact, the method of proof of \cite[th.~7.2]{Milnor} shows that $\tau^2(X;\mathfrak h)$ is invariant under subdivision. 
\end{rem}

\begin{proof}[Proof of Theorem \ref{thm:tor-det-L}]  For the purpose of this proof we suppress
$W$ and write $\cal L = \cal L(W)$. We also set $C_\ast := C_\ast(X;\R)$.
Define the splitting maps $s_{k-1}: B_{k-1} \to C_k$ by
\[ s_{k-1}(b_{k-1}^i) = e^{-W_k} \partial^* e^{W_{k-1}} \cal L^{-1}_{k-1} (b^i)
= \partial^*_R \cal L^{-1}_{k-1} ( b_{k-1}^i)\, .\]
Let $B_R^k(X;\R)$ denote the image of $s_{k-1}$, and similarly we define
$B^k_R(X;\Z)$ to be $s_{k-1}(B_k(X;\Z))$.
Note that $B^k_R(X;\R)$ is the
orthogonal compliment to $Z_k$ in the modified inner product on $C_k$.

Let $\gamma^k$ denote the square of the covolume of $B^k_R(X;\Z)\subset B^k_R(X;\R)$,
using the inner product on $B^k_R(X;\R)$ induced by the modified
inner product on $C_k$. Similarly, let
$\gamma_{k-1}$ denote the square of the covolume of $B_{k-1}(X;\Z) \subset B_{k-1}(X;\R)$,
where $B_{k-1}(X;\R)$ is given the inner product by restricting the modified 
inner product on $C_{k-1}$. Using the isomorphism $B_k \oplus H_{k} \oplus B_{k-1} \overset{\cong}\to C_k$
determined by the splitting, we infer
\begin{equation} \label{eq:basis-change}
 \det[\mathfrak b_{k}\mathfrak h_k \mathfrak b_{k-1}/\mathfrak c_k]^2 =  
 \frac{\gamma_k \eta_k \gamma^k }{\prod_{b \in X_k} e^{W_{kb}}}\, ,
\end{equation}
so the square of the Reidemeister torsion is
\begin{equation} \label{eqn:squared-torsion}
\tau^2(X;\mathfrak h) = \frac{ \prod_{k \text{ even}} \gamma_k \eta _k  \gamma^k}{\prod_{k
\text{ odd}}
\gamma_k  \eta_k  \gamma^k} \frac{ \prod_{k \text{ odd}, b \in X_k} e^{W_{kb}}}{
\prod_{k \text{ even}, b \in X_k} e^{W_{kb}}} \, .
\end{equation}
Since $s_k = \partial^*_W \cal L_{k}^{-1}$, we have $s_k^* = \cal L_k^{-1} \partial$
(since $\mathcal{L}$ is self-adjoint). Therefore, $s_k^*s_k = \cal L_k^{-1} \partial
 \partial^*_W \cal L_k ^{-1} = \cal L_k^{-1}$. We use this fact to compute the
quotient of $\gamma^k/\gamma_{k-1}$. Recall that $\gamma^k$ is given 
as the determinant of the inner product matrix, we compute 
\begin{align*}
  \langle s_{k-1}(b_{k-1}^i), s_{k-1}(b_{k-1}^j) \rangle_R
  &= \langle s^*_{k-1}s_{k-1}(b_{k-1}^i), (b_{k-1}^j) \rangle_R \\
  &= \langle \cal L_{k-1}^{-1} (b_{k-1}^i), (b_{k-1}^j) \rangle_R .
\end{align*}
The determinant of the matrix with these entries latter is, by definition, $(\det U)^2 \det \cal L_{k-1}$, where
$U$ is the change of basis matrix
expressing $\mathfrak b_{k-1}$ in terms of an orthornormal basis 
for $B_{k-1}(X;\R)$ in the modified inner product.
 A similar observation shows that the determinant of the matrix whose entries are
$\langle b_{k-1}^i, b_{k-1}^j \rangle_R$ is $(\det U)^2$, and this is just $\gamma_{k-1}$.

Consequently, the quotient of these determinants is
\begin{equation} \label{eqn:gamma-quotient}
\frac{ \gamma^k}{\gamma_{k-1}} = \frac{1}{\det \cal L_{k-1}} \, .
\end{equation}
Inserting Eqn.~\eqref{eqn:gamma-quotient} into Eqn.~\eqref{eqn:squared-torsion} and
performing the evident cancellations, we conclude
\begin{equation*}
\tau^2(X;\mathfrak h) = \frac{\prod_{k \mbox{ } \mathrm{even}} \det \cal L_k(W)}{\prod_{k
\mbox{ }\mathrm{odd}}
\det \cal L_k(W)} \cdot \frac{\prod_{k \mbox{ } \mathrm{odd}, b \in X_k} e^{W_{kb}}}{\prod_{k
\mbox{ } \mathrm{even},
b \in X_k} e^{W_{kb}}} \cdot \frac{\prod_{k \mbox{ } \mathrm{even}} \eta_k}{\prod_{k
\mbox{ } \mathrm{odd}}
\eta_k}. \qedhere
\end{equation*}
\end{proof}

In the special case  when $W=0$,
we can combine Theorem \ref{thm:tor-det-L} with Corollary \ref{cor:higher-matrix-tree}.
This immediately  gives Theorem \ref{thm:torsion-tree}:

\begin{cor}[Torsion-Tree Theorem] \label{thm:tor-det-L-w=0}
For a finite, connected CW complex $X$, we have
\begin{equation*}
\tau^2(X;\mathfrak h) = 
{\prod_{k \ge 0} 
 (\delta_{k}\sum_{T\in \cal T_{k+1}} \theta^2_T})^{(-1)^k}\,  .
\end{equation*}
\end{cor}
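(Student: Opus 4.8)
The plan is to substitute $W = 0$ into Theorem~\ref{thm:tor-det-L} and then recognize each factor $\det \cal L_k(0)$ as an instance of the Higher Matrix-Tree Theorem (Corollary~\ref{cor:higher-matrix-tree}) applied to the skeleton $X^{(k+1)}$. When $W = 0$ every factor $e^{W_{kb}}$ equals $1$, so the middle product in Theorem~\ref{thm:tor-det-L} is trivial and the formula collapses to
\[
\tau^2(X;\mathfrak h) = \prod_{k \ge 0} \bigl(\eta_k \det \cal L_k\bigr)^{(-1)^k}\, ,
\]
where $\cal L_k = \cal L_k(0) = \partial\partial^* \: B_k(X;\R) \to B_k(X;\R)$.

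The key step is to identify $\cal L_k$ with the unweighted matrix-tree operator for the skeleton $X^{(k+1)}$, viewed as a complex of dimension $k+1$. Since $k$-boundaries depend only on the $(k+1)$-cells, we have $B_k(X;\R) = B_k(X^{(k+1)};\R)$ and the restricted boundary operators agree; hence $\cal L_k$ is precisely the operator $\cal L$ of Corollary~\ref{cor:higher-matrix-tree} for $X^{(k+1)}$. That corollary then gives $\det \cal L_k = \gamma_{X^{(k+1)}} \sum_{T \in \cal T_{k+1}} \theta_T^2$, with $\gamma_{X^{(k+1)}} = \mu_{X^{(k+1)}}/\theta_{X^{(k+1)}}^2$.

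It remains to match the normalizing constant with the intrinsic quantity $\delta_k = \eta_k\mu_k/\theta_k^2$. First, $\mu_{X^{(k+1)}}$ is the squared covolume of $B_k(X^{(k+1)};\Z) = B_k(X;\Z)$, which is exactly $\mu_k$. Second, since attaching cells of dimension $\ge k+2$ leaves $H_k$ unchanged, $H_k(X^{(k+1)};\Z) = H_k(X;\Z)$, so $\theta_{X^{(k+1)}} = \theta_k$. Therefore $\gamma_{X^{(k+1)}} = \mu_k/\theta_k^2$ and
\[
\eta_k\det \cal L_k = \frac{\eta_k\mu_k}{\theta_k^2}\sum_{T \in \cal T_{k+1}}\theta_T^2 = \delta_k\sum_{T \in \cal T_{k+1}}\theta_T^2\, .
\]
Substituting into the displayed alternating product yields the claimed formula.

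I expect no serious obstacle here: the content lies entirely in the bookkeeping of normalization factors. The one point requiring care is the skeletal stability of homology and boundary groups, which is what lets us transfer the Higher Matrix-Tree Theorem---stated only for the top dimension---to each intermediate level $k+1$. Once that identification is in place, the equality of constants and the assembly of the alternating product are routine.
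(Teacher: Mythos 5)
Your proposal is correct and is exactly the paper's argument: the paper deduces the Torsion--Tree Theorem by setting $W=0$ in Theorem \ref{thm:tor-det-L} and invoking Corollary \ref{cor:higher-matrix-tree} for each skeleton $X^{(k+1)}$. Your write-up merely makes explicit the bookkeeping the paper leaves implicit, namely the identifications $B_k(X^{(k+1)};\mathbb{R})=B_k(X;\mathbb{R})$, $\mu_{X^{(k+1)}}=\mu_k$ and $\theta_{X^{(k+1)}}=\theta_k$, all of which are valid.
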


\subsection*{An alternative formula} 
In this part we shall derive a different formula for the torsion in terms of a single
spanning tree in each degree as well as a choice of auxiliary structure, namely,
homology truncation data for $X$.


\begin{hypo_no}
For each $k \ge 1$, we fix a spanning tree $T^k$ for $X^{(k)}$.
Our convention is to set $T^0 =\emptyset$.
\end{hypo_no}

\begin{defn} A {\em homology truncation} of $X$ in degree $k \ge 0$, subordinate to
$T^k$, is a subcomplex $i\: V^k \subset X^{(k)}$ 
such that $T^k \subset V^k$ and $i_\ast\: H_\ast(V^k;\R) \to H_\ast(X;\R)$ are isomorphisms for $\ast \le k$. 
\end{defn}

In induction argument similar to the proof of Lemma \ref{lem:spanning-tree-exist} shows that homology truncations exist. Note that $V^0$ consists
of a single vertex of $X$. We have a filtration
\[
T^0 \subset V^0 \subset \cdots \subset X^{(k-1)} \subset T^k \subset V^k \subset X^{(k)} \subset \cdots
\]


\begin{lem}\label{lem:Tk-split}
The choice of spanning tree $T^k$ determines a splitting $B_{k-1}(X;\R) \to C_k(X;\R)$.
The choice of homology truncation $V^k$ subordinate to $T^k$ determines a splitting $H_k(X;\R) \to Z_k(X;\R)$.
\end{lem}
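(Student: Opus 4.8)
The plan is to construct the two splittings directly from the combinatorial data and verify each has the required properties. For the first splitting, recall that a spanning tree $T^k$ of $X^{(k)}$ has the property that $X^{(k-1)} \subset T^k$, so $T^k$ and $X^{(k)}$ share the same $(k-1)$-skeleton and hence the same chain group $C_{k-1}$. The key observation is that, working inside $X^{(k)}$, the operator $\cal L^{T^k} = \partial_{T^k}\partial^*_{T^k}\colon B_{k-1}(X;\R) \to B_{k-1}(X;\R)$ is an isomorphism (this is the spanning-tree case of the operators studied in \S\ref{sec:weak-matrix-tree}, where invertibility of $\cal L^T$ is used throughout, e.g.\ in Eq.~\eqref{eq:detLAT}). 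I would then define the splitting $s_{k-1}\colon B_{k-1}(X;\R) \to C_k(X;\R)$ by
\[
s_{k-1} := \partial^*_{T^k} (\cal L^{T^k})^{-1}\, ,
\]
exactly mirroring the construction of the splitting in the proof of Theorem~\ref{thm:tor-det-L}, but now using the spanning tree $T^k$ in place of all of $X^{(k)}$. To check this is a splitting of $0 \to Z_k \to C_k \to B_{k-1} \to 0$, I would verify $\partial \circ s_{k-1} = \partial_{T^k}\partial^*_{T^k}(\cal L^{T^k})^{-1} = \cal L^{T^k}(\cal L^{T^k})^{-1} = \id$ on $B_{k-1}(X;\R)$, where I use that $\partial$ agrees with $\partial_{T^k}$ on the image of $\partial^*_{T^k}$, which lands in $C_k(T^k;\R)$.

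For the second splitting, the homology truncation $V^k$ subordinate to $T^k$ satisfies $i_*\colon H_k(V^k;\R) \xrightarrow{\cong} H_k(X;\R)$ by definition. The plan is to use this isomorphism to transport homology classes of $X$ into cycles supported on $V^k$. Concretely, since $V^k \subset X^{(k)}$ has no cells above dimension $k$, we have $Z_k(V^k;\R) = H_k(V^k;\R)$, and the inclusion of chains $C_k(V^k;\R) \hookrightarrow C_k(X;\R)$ carries $Z_k(V^k;\R)$ into $Z_k(X;\R)$. Composing the inverse isomorphism $H_k(X;\R) \xrightarrow{\cong} H_k(V^k;\R) = Z_k(V^k;\R)$ with this inclusion produces a map $t_k\colon H_k(X;\R) \to Z_k(X;\R)$, and I would check that the projection $Z_k(X;\R) \to H_k(X;\R)$ composed with $t_k$ is the identity, so $t_k$ splits $0 \to B_k \to Z_k \to H_k \to 0$. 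The compatibility $T^k \subset V^k$ guarantees these two splittings are built on nested pieces of the filtration displayed just before the lemma, which is presumably why the lemma bundles them together.

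The main obstacle I anticipate is the second splitting: I must be certain that $H_k(V^k;\R)$ genuinely equals $Z_k(V^k;\R)$, i.e.\ that $B_k(V^k;\R) = 0$. This holds precisely because $V^k$ is a subcomplex of $X^{(k)}$ and therefore has no $(k+1)$-cells, so there are no $k$-boundaries within $V^k$; every $k$-cycle of $V^k$ is automatically a homology class. I would make this step explicit, since it is the feature that makes the truncation data yield an honest section rather than merely a map to homology. By contrast, the first splitting is essentially a transcription of the construction already carried out in the proof of Theorem~\ref{thm:tor-det-L}, so the only real content there is recording that $\cal L^{T^k}$ is invertible and that the image of $\partial^*_{T^k}$ sits inside $C_k(T^k;\R)$; I would cite \S\ref{sec:weak-matrix-tree} for the invertibility rather than reprove it.
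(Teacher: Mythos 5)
Your proposal is correct, and for the second splitting it is word for word the paper's argument: compose $i_*^{-1}\colon H_k(X;\R)\to H_k(V^k;\R)=Z_k(V^k;\R)$ with the inclusion of cycles into $Z_k(X;\R)$, the identification $H_k(V^k;\R)=Z_k(V^k;\R)$ holding exactly for the reason you isolate, namely that $V^k$ has no $(k+1)$-cells so $B_k(V^k;\R)=0$. For the first splitting the paper takes a more direct route than you do: since $T^k$ is a spanning tree of $X^{(k)}$ one has $Z_k(T^k;\R)=H_k(T^k;\R)=0$ and $B_{k-1}(T^k;\R)=B_{k-1}(X;\R)$, so $\partial\colon C_k(T^k;\R)\to B_{k-1}(X;\R)$ is already an isomorphism, and the splitting is simply $\partial^{-1}$ followed by the inclusion $C_k(T^k;\R)\subset C_k(X;\R)$. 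Your formula $s_{k-1}=\partial^*_{T^k}(\cal L^{T^k})^{-1}$ defines the \emph{same} map, since $\partial$ is injective on $C_k(T^k;\R)$ and therefore admits a unique section with image in that subspace; but it reaches it through the invertibility of $\cal L^{T^k}$ on $B_{k-1}(X;\R)$, which is itself a repackaging of the two facts just quoted (injectivity of $\partial_{T^k}$ needs $Z_k(T^k;\R)=0$, and surjectivity of $\cal L^{T^k}$ onto $B_{k-1}(X;\R)$ needs $B_{k-1}(T^k;\R)=B_{k-1}(X;\R)$). So the Laplacian detour costs you nothing logically, but it buries the one piece of content the lemma has --- that the spanning-tree axioms are precisely what make $\partial$ invertible from the tree's top chains onto the boundaries of $X$ --- inside a citation, where the paper's two-line composite displays it directly.
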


\begin{proof}  The first splitting is the composition
\[
B_{k-1}(X;\R) = B_{k-1}(T^k;\R) @> \partial^{-1} >\cong > B_k(T^k;\R) @>>> C_k(X;\R) 
\]
and the second is given by 
\[
H_k(X;\R) @> i_*^{-1} >\cong > H_k(V^k;\R) = Z_k(V^k;\R) @> i_\ast >> Z_k(X;\R)\, . \qedhere
\]
\end{proof}

Define a basis for $B^k(X;\Z)$, $\mathfrak b^k = \{b^k_i\}$, as given by the cells of
$T^k_k$. Here we are using the preferred isomorphism $B^k(X;\Z) \cong C_k(T^k;\Z)$. 
This defines a
basis $\mathfrak b_{k-1}$  for $B_{k-1}(X;\R) $ by $\{ b^i_{k-1} = \partial b^k_i \}$. The
basis for homology in degree $k$ is the combinatorial basis $\mathfrak h_k$ given as an input to the torsion. As always, the basis for $C_k(X;\R)$ is given
by the set of $k$-cells.

Before explicitly identifying the torsion, note that in each dimension
$k$ there are essentially three types of cells:
\[
 X_k = (T_k^k) \cup (V_k^k \setminus T_k^k) \cup (X_k \setminus V_k^k)\, .
\]
Roughly speaking, the first set of cells contributes to $B^k$, the
second set contributes to $H_k$ and the last set contributes to $B_k$.
This gives us a decomposition of the $k$-chains
\begin{equation}\label{eq:splitting}
 C_k(X;\R) = C_k(T^k;\R) \oplus C_k(V^k/T^k;\R) \oplus
C_k(X/V^k;\R)\, .
\end{equation}
(when $k = 0$, we replace $C_0(V^0/T^0;\R)$ with $C_0(V^0,T^0;\R) = \R$, etc.)

We first identify the homological contribution in degree $k$ to the 
torsion. With respect to the splitting Eq.~\eqref{eq:splitting}, the combinatorial basis $\mathfrak h_k$ 
has image contained in the direct sum
\[
C_k(T^k;\R) \oplus C_k(V^k/T^k;\R) = C_k(V^k;\R)\, .
\]
Hence, its contribution to the torsion is left invariant if we project these
elements 
onto $C_k(V^k/T^k;\R) = H_k(V_k/T_k;\R) = H_k(X;\R)$ 
(since the other summand $C_k(T^k;\R) =B^k(T^k;\R)$ maps to
$B^k(X;\R)$ and the relevant determinant remains unchanged if we project away from  
$B^k(X;\R)$). Consequently, the homological contribution to the torsion in degree $k$ is
given by the determinant of the composite
\[
H_k(X;\R) @> i_\ast^{-1} > \cong > H_k(V^k;\R) @> p_* >\cong > H_k(V^k/T^k;\R)\, ,
\]
where $p\: V^k \to V^k/T^k$ is the quotient map.
So we wish to identify $\det p_\ast /\det i_\ast$.

\begin{defn} Let 
\[
\chi_k\in \Bbb N
\] 
denote the square of the determinant
of $i_\ast\: H_k(V^k;\R) \to H_k(X;\R)$, i.e., the square of the covolume of the lattice
$i_*(H_k(V^k;\Z)) \subset H_k(X;\R)$.
\end{defn}

Applying Proposition \ref{prop:det-tor} to the real isomorphism
$H_k(V^k;\Z) \to H_k(V^k/T^k;\Z)$, we infer

\begin{lem}\label{prop:det-TbarK}
The determinant of  $p_\ast$ is the ratio
$\pm \theta_{T^k}/\theta_{V^k}$.

\end{lem}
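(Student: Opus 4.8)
The plan is to recognize $p_\ast\colon H_k(V^k;\Z) \to H_k(V^k/T^k;\Z)$ as a real isomorphism of finitely generated free abelian groups, apply Proposition \ref{prop:det-tor} to replace $|\det p_\ast|$ by the order of its cokernel, and then identify that cokernel as the kernel of the inclusion-induced map on $(k{-}1)$st homology, whose order I compute to be the ratio $\theta_{T^k}/\theta_{V^k}$ of torsion orders.

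First I would write the long exact homology sequence of the pair $(V^k,T^k)$ with integer coefficients. Since $T^k$ is a spanning tree of $X^{(k)}$, the spanning-tree axiom gives $H_k(T^k;\Z)=0$. Because $X^{(k-1)}\subset T^k\subset V^k$, every cell of $V^k$ not lying in $T^k$ is a $k$-cell, so $V^k/T^k$ has cells only in dimensions $0$ and $k$; hence $H_k(V^k/T^k;\Z)$ is free abelian and $H_{k-1}(V^k,T^k;\Z)\cong \tilde H_{k-1}(V^k/T^k;\Z)=0$. Using $H_k(V^k,T^k;\Z)\cong H_k(V^k/T^k;\Z)$, the sequence collapses to
$$0 \to H_k(V^k;\Z) \xrightarrow{p_\ast} H_k(V^k/T^k;\Z) \xrightarrow{\partial} H_{k-1}(T^k;\Z) \xrightarrow{j} H_{k-1}(V^k;\Z) \to 0,$$
where $j$ is induced by the inclusion $T^k\subset V^k$. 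In particular $p_\ast$ is injective and $\operatorname{coker}(p_\ast)\cong \im(\partial)=\ker(j)$.

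Next I would verify that $p_\ast$ is a real isomorphism. Both groups are free abelian ($H_k(V^k;\Z)$ as a subgroup of $C_k(V^k;\Z)$), so it suffices to check that $p_{\ast,\R}$ is an isomorphism. The maps $H_{k-1}(T^k;\R)\to H_{k-1}(X;\R)$ and $H_{k-1}(V^k;\R)\to H_{k-1}(X;\R)$ induced by inclusion are both isomorphisms—the first by the spanning-tree axiom together with $H_{k-1}(X^{(k)};\R)\cong H_{k-1}(X;\R)$ (cf.\ Lemma \ref{lem:add_or_remove}), the second by the defining property of a homology truncation. Since $j$ is compatible with these two isomorphisms, $j_\R$ is itself an isomorphism; by exactness $\partial_\R=0$, forcing $p_{\ast,\R}$ to be an isomorphism. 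Thus $\ker(j)$ is finite, and Proposition \ref{prop:det-tor} gives $|\det p_\ast| = t(p_\ast) = |\operatorname{coker}(p_\ast)| = |\ker(j)|$.

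Finally I would identify $|\ker(j)|$ with $\theta_{T^k}/\theta_{V^k}$. Here $j$ is a surjection with finite kernel $K:=\ker(j)$, so $K$ lies in the torsion subgroup of $H_{k-1}(T^k;\Z)$. A short diagram chase shows $j$ carries the torsion subgroup of $H_{k-1}(T^k;\Z)$ \emph{onto} that of $H_{k-1}(V^k;\Z)$: any torsion class downstairs lifts to some $g$ with $ng\in K$ for some $n>0$, and finiteness of $K$ forces $g$ itself to be torsion. Hence the torsion subgroup of $H_{k-1}(V^k;\Z)$ is the quotient of that of $H_{k-1}(T^k;\Z)$ by $K$, giving $\theta_{V^k}=\theta_{T^k}/|K|$, i.e.\ $|K|=\theta_{T^k}/\theta_{V^k}$, and therefore $\det p_\ast = \pm\,\theta_{T^k}/\theta_{V^k}$. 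I expect the last step to be the main obstacle: one must argue that the surjection $j$ matches the two torsion subgroups \emph{exactly}, rather than merely bounding their orders, so that the quotient $\theta_{T^k}/\theta_{V^k}$ emerges on the nose rather than up to an unwanted factor.
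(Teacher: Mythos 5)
Your proof is correct and follows essentially the same route as the paper: the paper's own proof is a one-line application of Proposition \ref{prop:det-tor} to the real isomorphism $p_\ast\colon H_k(V^k;\Z)\to H_k(V^k/T^k;\Z)$, identifying $|\det p_\ast|$ with the order of its cokernel. You additionally supply the details the paper leaves implicit---the long exact sequence of the pair $(V^k,T^k)$ identifying $\operatorname{coker}(p_\ast)$ with $\ker j$, the verification that $p_\ast$ is a real isomorphism, and the torsion-subgroup chase giving $|\ker j|=\theta_{T^k}/\theta_{V^k}$---and these details are correct.
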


Consequently, up to sign, the contribution of $\mathfrak h_{k}$ to the determinant defining the Reidemeister torsion is 
\begin{equation} \label{hk}
\frac{\theta_{T^k}}{\theta_{V^k} \sqrt{\chi_k}}\,.
\end{equation}



We next identify the contribution in degree $k$ to the torsion provided by 
the basis $\mathfrak b_k$. As defined above this basis is given by the boundaries
of the cells of $T_{k+1}$. This leads us to consider the composite
\begin{equation}
 C_{k+1}(T^{k+1};\Z) @> \partial>>  B_k(T^{k+1};\Z)
@>q_k >> C_k(X /V^k; \Z) \, ,
\end{equation}
where $q_k$ is induced by the quotient map $T^{k+1} \to X/V^k$.
The homomorphism $\partial$ is an isomorphism and so it has determinant $\pm 1$.
The second homomorphism $q_k$ is a real isomorphism and therefore the 
determinant of its realification, $\det((q_k)_\R)$, has value $\pm t(q_k)$ by 
 Proposition \ref{prop:det-tor}. Note that $(q_k)_\R$ 
 is the restriction of the orthogonal projection $C_k(X;\R) \to C_k(X/V^k;\R)$
 to the subspace $B_k(T_{k+1};\R) \subset C_k(X;\R)$.
and the projection of $\mathfrak b_k$ onto this summand gives its contribution to the torsion.
Hence, the determinant of the composition $(q_k)_{\R}\circ \partial$ is $\pm t(q_k)$.
So the contribution in degree $k$ of  $\mathfrak b_k$  to the torsion is $\pm t(q_k)$.

Lastly, the contribution to the torsion in degree $k$ provided by the basis $\mathfrak b_{k-1}$
is given by the standard basis of $C_k(T_k;\R)$ via the splitting Eq.~\eqref{eq:splitting}. 
It is then evident that the contribution in degree $k$ of $\mathfrak b_{k-1}$  to the torsion 
is $1$.

Assembling, we obtain
\begin{equation}
\det[\mathfrak b_k \mathfrak h_k\mathfrak b_{k-1}] = \pm t(q_k) \cdot \frac{\theta_{T^k}}{\theta_{V^k} \sqrt{\chi_k}} \cdot 1\, .
\end{equation}
Forming the square of the Reidemeister torsion, we conclude

\begin{thm} \label{thm:last} For a connected, finite CW complex $X$ with combinatorial
homology basis $\mathfrak h$, spanning tree data $\{T^k\}$ and homology truncation data
$\{V^k\}$, we have
\[
\tau^2(X;\mathfrak h) = \prod_{k \geq 0} \left( \frac{\theta_{T^k}^2t(q_k)^2}{\theta_{V^k}^2\chi_k } 
 \right)^{(-1)^k}\, ,
\]
where $q_k\: B_k(T^{k+1};\Z)\to  C_k(X /V^k; \Z)$ and $\chi_k\in \Bbb N$ are as above.
\end{thm}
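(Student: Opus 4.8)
The plan is to start from Milnor's definition $\tau(X;\mathfrak h)=\prod_{k\ge 0}\det[\mathfrak b_k\mathfrak h_k\mathfrak b_{k-1}/\mathfrak c_k]^{(-1)^k}$ and to compute each factor $\det[\mathfrak b_k\mathfrak h_k\mathfrak b_{k-1}/\mathfrak c_k]$ separately, exploiting the three-fold decomposition of $C_k(X;\R)$ in Eq.~\eqref{eq:splitting}. The splittings of Lemma~\ref{lem:Tk-split} pin down concrete lifts: the splitting determined by $T^k$ sends $\mathfrak b_{k-1}$ to the standard basis of the summand $C_k(T^k;\R)$, while the splitting determined by $V^k$ sends $\mathfrak h_k$ into $C_k(V^k;\R)=C_k(T^k;\R)\oplus C_k(V^k/T^k;\R)$. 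The remaining basis $\mathfrak b_k=\{\partial b_i^{k+1}\}$ is indexed by the $(k+1)$-cells of $T^{k+1}$ and a priori has components in all three summands.

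First I would argue that the change-of-basis determinant factors as a product of three contributions, one per summand. This is the crux of the matter: since the $\mathfrak b_{k-1}$ part lies entirely in $C_k(T^k;\R)$ and the $\mathfrak h_k$ part lies in $C_k(T^k;\R)\oplus C_k(V^k/T^k;\R)$, after ordering the basis $(\mathfrak b_{k-1},\mathfrak h_k,\mathfrak b_k)$ compatibly with the three summands of Eq.~\eqref{eq:splitting}, the matrix expressing it in the adapted standard basis is block triangular. Hence its determinant is the product of the diagonal blocks, so each basis family may be evaluated by projecting onto the one summand it newly occupies.

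Next I would identify the three diagonal blocks. The $\mathfrak b_{k-1}$ block is the identity on $C_k(T^k;\R)$ and contributes $1$. The $\mathfrak h_k$ block is the composite $H_k(X;\R)\xrightarrow{i_\ast^{-1}}H_k(V^k;\R)\xrightarrow{p_\ast}H_k(V^k/T^k;\R)$; by Lemma~\ref{prop:det-TbarK} and the definition of $\chi_k$ its determinant is $\pm\theta_{T^k}/(\theta_{V^k}\sqrt{\chi_k})$, exactly the quantity recorded in Eq.~\eqref{hk}. The $\mathfrak b_k$ block is the projection of $\{\partial b_i^{k+1}\}$ onto $C_k(X/V^k;\R)$, that is, the realification $(q_k)_\R\circ\partial$; since $\partial$ has determinant $\pm 1$ and Proposition~\ref{prop:det-tor} gives $|\det(q_k)_\R|=t(q_k)$, this block contributes $\pm t(q_k)$. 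Multiplying yields $\det[\mathfrak b_k\mathfrak h_k\mathfrak b_{k-1}/\mathfrak c_k]=\pm\, t(q_k)\,\theta_{T^k}/(\theta_{V^k}\sqrt{\chi_k})$.

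Finally, squaring removes the sign ambiguity and clears the square root, and inserting the result into Milnor's alternating product over $k$ gives the stated formula. The main obstacle is the block-triangularity claim of the second step; everything else is bookkeeping combined with the cited covolume and determinant-of-torsion facts. I would take care that the degenerate case $k=0$, where $C_0(V^0/T^0;\R)$ is replaced by $C_0(V^0,T^0;\R)=\R$, as well as the top degree, are subsumed by the same triangularity argument, so that no separate treatment is needed.
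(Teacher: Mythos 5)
Your proposal is correct and follows essentially the same route as the paper: both use the three-fold splitting $C_k(X;\R) = C_k(T^k;\R)\oplus C_k(V^k/T^k;\R)\oplus C_k(X/V^k;\R)$ of Eq.~\eqref{eq:splitting} and evaluate the contributions of $\mathfrak b_{k-1}$, $\mathfrak h_k$, and $\mathfrak b_k$ as $1$, $\pm\theta_{T^k}/(\theta_{V^k}\sqrt{\chi_k})$, and $\pm t(q_k)$ respectively, citing Lemma~\ref{prop:det-TbarK} and Proposition~\ref{prop:det-tor}. Your explicit block-triangularity argument is exactly the structure the paper invokes when it says the determinant is ``left invariant'' under projecting each basis family onto the summand it newly occupies, so nothing further is needed.
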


\begin{ex} If $X$ has dimension one, then all terms appearing in Theorem \ref{thm:last} are equal to one. Hence, $\tau^2(X;\mathfrak h) = 1$ whenever $X$ is a connected finite graph.
\end{ex}

\begin{ex} Let $X = \R P^2$. Then we may choose $T^1 = \ast = V^1$ and $T^2 = \R P^2 = V^2$.
In this instance, the only non-trivial term appearing in Theorem \ref{thm:last} is $t(q_1)^2 = 4$. Hence $\tau^2(\R P^2;\mathfrak h) = \tfrac{1}{4}$.
\end{ex}

\end{document}